\def\Itemautorefname~#1\null{(#1)\null}
\theoremstyle{plain}
\declaretheorem[title=Theorem, parent=section]{theorem}
\declaretheorem[title=Lemma,sibling=theorem]{lemma}
\declaretheorem[title=Proposition,sibling=theorem]{proposition}
\declaretheorem[title=Corollary,sibling=theorem]{corollary}
\theoremstyle{definition}
\declaretheorem[title=Definition,sibling=theorem]{definition}
\declaretheorem[title=Remark,sibling=theorem]{remark}
\declaretheorem[title=Remark, numbered=no]{remark*}
\declaretheorem[title=Assumption, numbered=no]{assumption*}
\numberwithin{equation}{section}
\newcommand{\R}{\mathbb{R}}
\newcommand{\cH}{\mathcal{H}}
\DeclareMathOperator{\diam}{diam}
\DeclareMathOperator{\dist}{dist}
\DeclareMathOperator{\Div}{div}
\DeclareMathOperator{\osc}{osc}
\DeclareMathOperator{\supp}{supp}
\newcommand{\xmathpalette}[2]{%
  \mathchoice
    {#1\displaystyle\textfont{#2}}
    {#1\textstyle\textfont{#2}}
    {#1\scriptstyle\scriptfont{#2}}
    {#1\scriptscriptstyle\scriptscriptfont{#2}}
}
\newcommand{\mres@thickness}[1]{\dimexpr1.5\fontdimen8 #13\relax}
\newcommand{\mres}{\mspace{3mu}{\xmathpalette\mres@\relax}\mspace{3mu}}
\newcommand{\mres@}[3]{%
  \begingroup
  \setlength\unitlength{%
    \dimexpr\fontcharht#21`A-0.5\mres@thickness{#2}%
  }%
  \raisebox{0.5\dimexpr\mres@thickness{#2}}{%
    \begin{picture}(1,1)
      \roundcap\roundjoin
      \linethickness{\mres@thickness{#2}}%
      \polyline(0,1)(0,0)(1,0)
    \end{picture}%
   }%
  \endgroup
}
\begin{document}

\allowdisplaybreaks

\title[Regularity of free boundaries for weak solutions of Alt-Caffarelli type
problems]{Regularity of Lipschitz free boundaries for weak solutions of
  Alt-Caffarelli type problems}

\author{Joan Domingo-Pasarin}

\author{Xavier Ros-Oton}

\address{Departament de Matem\`atiques i Inform\`atica, Universitat de Barcelona, Gran Via de les Corts Catalanes 585, 08007 Barcelona, Spain}
\email{jdomingopasarin@ub.edu}

\address{ICREA, Pg. Llu\'is Companys 23, 08010 Barcelona, Spain \& Universitat de Barcelona, Departament de Matem\`atiques i Inform\`atica, Gran Via de les Corts Catalanes 585, 08007 Barcelona, Spain \& Centre de Recerca Matem\`atica, Barcelona, Spain}
\email{xros@icrea.cat}

\keywords{one-phase problem, free boundary, regularity}

\subjclass[2020]{35R35, 35B65, 31B05}

\begin{abstract}
  Motivated by the Serrin problem, we study weak solutions of the generalised
  Alt-Caffarelli problem \(-\Delta u = f\) in \(\Omega\), \(u = 0\) on
  \(\partial \Omega\), \(\partial_{\nu}u = Q\) on \(\partial \Omega\). Our main result establishes that if
  \(\Omega\) is Lipschitz, then it is actually \(C^{\infty}\) (provided that \(f\) and
  \(Q\) are smooth). This was known before only for viscosity solutions. As a
  corollary, we obtain an alternative solution of Serrin's problem in the case of
  Lipschitz domains. We also discuss the characterisation of the regularity of
  Lipschitz domains in terms of their Poisson kernel.
\end{abstract}

\allowdisplaybreaks

\maketitle
\section{Introduction}

Free boundary problems have been a major line of research in the field of PDE in the
last decades; see for instance \cite{MR2145284, MR2962060, MR4807210}. Among them,
Serrin's overdetermined problem has seen important developments since its
introduction in \cite{MR333220}; we refer the reader to the surveys
\cite{schaefer2001non, MR3802818} for a comprehensive overview of the contributions
to this problem. Given a bounded domain \(\Omega \subset \R^n\), Serrin's problem consists in
proving that if \(\Omega\) admits a solution of
\begin{equation}
  \label{eq:serrin-prob}
  \left\{
  \begin{aligned}
    -\Delta u &= 1 \quad&&\text{in } \Omega \\
    u &= 0 \quad&&\text{on } \partial \Omega \\
    \partial_{\nu}u &= c \quad&&\text{on } \partial \Omega,
  \end{aligned}
  \right.
\end{equation}
then \(\Omega\) is a ball \(B_R\) of radius \(R = R(n,c)\). Without the Neumann condition
\eqref{eq:serrin-prob} already has a unique solution. Therefore, the Neumann
condition makes the problem overdetermined and turns it into a free boundary problem.
Serrin's problem can thus be understood as showing that the overdetermined nature of
\eqref{eq:serrin-prob} forces \(\Omega\) to be a ball.

In his celebrated paper \cite{MR333220}, Serrin solved this problem for \(C^2\)
bounded domains under the assumption \(u \in C^2(\overline{\Omega})\). Observe that whenever
\(\Omega\) has finite \((n-1)\)-dimensional Hausdorff measure and
\(u \in H^1_0(\Omega)\), \eqref{eq:serrin-prob} has a natural weak formulation:
\begin{equation}
  \label{eq:weak-serrin-prob}
  -\int_\Omega \nabla u\cdot\nabla\eta dx = c\int_{\partial \Omega} \eta d\cH^{n-1} - \int_\Omega \eta dx \qquad \forall\eta \in C^{\infty}_{\mathrm{c}}(\R^n).
\end{equation}
Hence, after Serrin's result it was natural to ask whether one could assume lower
regularity of \(\Omega\) and \(u\) and still obtain the same result. This proved to be a
challenging question which did not see any progress initially. In 1992, Vogel
\cite{MR1200301} achieved a breakthrough when he was able to show that \(C^1\)
regularity combined with some additional hypothesis on \(u\) were enough to prove
Serrin's theorem. Six years later, in 1998, Parjapat proved in \cite{MR1487978} that
Serrin's theorem still holds for domains which are \(C^2\) everywhere except at a
potential corner. This raised the question of whether the regularity of \(\Omega\) could
be lowered even further to Lipschitz regularity. Such question was left unanswered
for 27 years until the very recent paper by Figalli and Zhang
\cite{figalli2024serrin}, where it has been solved. In it, the authors establish
Serrin's theorem for a wider class of sets of finite perimeter which includes
Lipschitz domains.

Of particular interest to us is Remark 1.5 in \cite{figalli2024serrin}, where Figalli
and Zhang highlight the connection between Serrin's problem and the Alt-Caffarelli
problem. More precisely, they point out the fact that one could think of using the
regularity theory of the latter to solve the former. Unfortunately, at the time this
regularity theory was mainly developed for viscosity-type solutions or minimizers of
the problem's energy functional. Thus, Figalli and Zhang conclude that it is unclear
how one would apply said regularity theory to the weak formulation
\eqref{eq:weak-serrin-prob}. Consequently, \cite{figalli2024serrin} developed a
different approach based on geometric measure theory techniques.

Motivated by the remark of Figalli and Zhang, in this paper we study weak solutions
of the Alt-Caffarelli problem whose free boundary is Lipschitz. Introduced in the
seminal paper \cite{MR618549}, the classical Alt-Caffarelli problem (or one-phase
Bernoulli problem) consists in studying for which bounded domains \(\Omega \subset \R^n\) (with
\(0 \in \partial \Omega\)) there exists a non-negative function \(u:B_1 \to \R\) solving
\begin{equation}
  \label{eq:alt-caff-prob}
  \left\{
  \begin{aligned}
    \Delta u &= 0 \quad&&\text{in } \Omega \cap B_1 \\
    u &= 0 \quad&&\text{on } \partial \Omega \cap B_1 \\
    \partial_{\nu}u &= 1 \quad&&\text{on } \partial \Omega \cap B_1.
  \end{aligned}
  \right.
\end{equation}
Here, \(\nu\) denotes the inward normal vector to \(\partial \Omega \cap B_1\). As in Serrin's
problem, the overdetermined nature of \eqref{eq:alt-caff-prob} leads one to expect
some restriction on the choice of \(\Omega\). In particular, one may think of using the
additional boundary condition \(\partial_{\nu}u = 1\) to prove higher regularity of the free
boundary \(\partial \Omega \cap B_1\). This topic was already studied in \cite{MR618549}
extensively. However, even though most results in \cite{MR618549} were stated for a
weak notion of solution, the literature for this problem has been mostly focused on
viscosity solutions and energy minimizers; see for instance \cite{MR990856,
  MR1029856, MR973745, MR2082392, MR2572253, MR2813524, MR3385632, MR3896024,
  MR4101738, MR4661533, ferreri2024boundarybranchonephase, fernándezreal2023generic}.
In particular, for such solutions it is known that Lipschitz free boundaries are
smooth (we refer to \cite{MR2145284} for more details). However, as Figalli and Zhang
observed, there is no analogous result for weak solutions.

\begin{remark}
  The recent paper \cite{MR4850026} provides a deep understanding of so-called
  stationary solutions of \eqref{eq:alt-caff-prob}. This class is different from the
  weak solutions that we study here. Actually, as explained in more detail below, the
  key point of our work consists in showing that, if the free boundary is Lipschitz,
  then weak solutions are stationary (and even viscosity) solutions.
\end{remark}

\subsection{Main results}

In this paper we prove the smoothness of Lipschitz free boundaries for weak solutions
of the one-phase Bernoulli problem with right hand side
\begin{equation}
  \label{eq:inhom-alt-caff-prob}
  \left\{
  \begin{aligned}
    -\Delta u &= f \quad&&\text{in } \Omega \cap B_1 \\
    u &= 0 \quad&&\text{on } \partial \Omega \cap B_1 \\
    \partial_{\nu}u &= Q \quad&&\text{on } \partial \Omega \cap B_1.
  \end{aligned}
  \right.
\end{equation}
We assume \(\Omega \subset \R^n\) to be a bounded Lipschitz domain with
\(0 \in \partial \Omega\) and both \(f\) and \(Q\) to be smooth functions such that
\(f \ge 0\) and \(Q > 0\). Our main theorem is as follows:
\begin{theorem}
  \label{thm:1}
  Let \(\Omega \subset \R^n\) be a bounded Lipschitz domain such that
  \(0 \in \partial \Omega\) and assume \(u\) solves \eqref{eq:inhom-alt-caff-prob} in the following
  weak sense: \(u \in H^1_{\mathrm{loc}}(B_1)\), \(u \geq 0\), \(u = 0\) on
  \(B_1 \setminus \Omega\) and
  \[
    -\int_{B_1} \nabla u\cdot\nabla\eta dx = \int_{\partial \Omega \cap B_1} Q\eta d\cH^{n-1} - \int_{\Omega \cap B_1} f\eta dx,
    \qquad \forall\eta \in C^{\infty}_{\mathrm{c}}(B_1).
  \]
  If \(f,Q \in C^{\infty}(B_1)\), \(f \geq 0\) and \(Q > 0\), then
  \(\partial \Omega \cap B_1\) is locally a smooth surface in \(B_1\).
\end{theorem}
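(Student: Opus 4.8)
The plan is to show that a weak solution with Lipschitz free boundary is in fact a viscosity solution of the one-phase problem, at which point the classical regularity theory (Caffarelli's improvement-of-flatness machinery, see \cite{MR2145284}) upgrades the Lipschitz free boundary to $C^{1,\alpha}$, and then to $C^\infty$ by Schauder/analytic bootstrap using that $f,Q$ are smooth. The bridge from "weak" to "viscosity" is the heart of the matter, and it will be carried out in three stages.

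First I would establish optimal regularity of $u$ up to the free boundary. Since $\Omega$ is Lipschitz, $u$ is harmonic (more precisely $-\Delta u = f$) in $\Omega\cap B_1$ and vanishes on $\partial\Omega\cap B_1$; boundary Harnack and the Lipschitz structure give that $u$ is comparable to the distance function $\dist(\cdot,\partial\Omega)$ from inside, hence $u$ is Lipschitz in $B_1$ and nondegenerate. The key quantitative input is to read off from the weak formulation that the distributional Laplacian of $u$ in $B_1$ is the measure $\mu = -f\,\1_{\Omega}\,dx + Q\,\cH^{n-1}\mres(\partial\Omega\cap B_1)$. In particular $\Delta u \ge 0$ as a distribution away from $\{f>0\}$-mass on $\Omega$, and more importantly the singular part of $\Delta u$ is exactly $Q\,\cH^{n-1}$ on the free boundary — this encodes the Neumann condition $\partial_\nu u = Q$ in a pointwise-a.e. sense via the Lebesgue differentiation theorem for the measure $\mu$, so that at $\cH^{n-1}$-a.e.\ free boundary point $x_0$ the blow-up of $u$ is the half-plane solution $Q(x)\,(x-x_0)\cdot\nu_{x_0}\,^+$ plus lower order.

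Second, and this is the main obstacle, I must promote this $\cH^{n-1}$-a.e.\ statement to a statement holding at \emph{every} free boundary point in the viscosity sense — i.e.\ rule out one-sided touching paraboloids with the wrong opening. Here the Lipschitz hypothesis on $\partial\Omega$ does the decisive work: a Lipschitz graph has a measure-theoretic normal $\cH^{n-1}$-a.e., and combined with the monotonicity/comparability $u \asymp \dist(\cdot,\partial\Omega)$ one shows that every blow-up of $u$ at every free boundary point is a global Lipschitz one-phase solution on a Lipschitz cone; by the results on Lipschitz free boundaries such a global solution with the correct flux must be a half-plane solution, hence the cone is a half-space and $u$ is asymptotically linear with slope $Q(x_0)$ from \emph{both} the sub- and super-solution side. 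This is precisely the viscosity solution property. The technical care needed is to handle the inhomogeneity $f$ (which only contributes at scale $o(r)$ after rescaling since $f$ is bounded) and to verify the flux of the blow-up equals $Q(x_0)$ using the weak formulation tested against rescaled bumps.

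Finally, once $u$ is a viscosity solution of $\Delta u = -f$ in $\{u>0\}$ with $\partial_\nu u = Q$ on $\partial\{u>0\}$ and free boundary Lipschitz, Caffarelli's theorem gives $\partial\Omega\cap B_1 \in C^{1,\alpha}_{\loc}$; then $u \in C^{1,\alpha}$ up to the free boundary on each side, the Neumann condition becomes a genuine oblique-type boundary condition with $C^\infty$ data, and the hodograph--Legendre transform (or Kinderlehrer--Nirenberg) bootstrap yields $\partial\Omega\cap B_1 \in C^\infty$. I expect stage two — showing the weak formulation forces the correct one-sided asymptotics at every point, not just a.e. — to be where essentially all the work lies; stages one and three are adaptations of known arguments.
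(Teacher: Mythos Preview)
Your overall architecture is the same as the paper's second proof: upgrade weak to viscosity, then invoke De Silva/Caffarelli for Lipschitz free boundaries, then bootstrap. Stage one and stage three are indeed routine. You also correctly flag stage two as the crux.

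Where your sketch has a real gap is in the mechanism you propose for stage two. You write that one can ``verify the flux of the blow-up equals $Q(x_0)$ using the weak formulation tested against rescaled bumps,'' and that blow-ups are automatically ``global Lipschitz one-phase solutions with the correct flux.'' This is precisely what is \emph{not} automatic for weak solutions. Testing the weak identity against $\eta(\cdot/r_k)$ gives a surface integral $\int_{\partial\{u_{r_k}>0\}} Q_{r_k}\eta\,d\cH^{n-1}$, and to pass to the limit you need $\cH^{n-1}\mres\partial\{u_{r_k}>0\}$ to converge to $\cH^{n-1}\mres\partial\{u_0>0\}$ --- Hausdorff convergence of the boundaries and $L^1$ convergence of the indicators (which you do get from Lipschitz regularity and nondegeneracy) are not enough to prevent loss or concentration of surface measure in the limit. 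The paper's entire Sections~3--5 are devoted to closing this gap: one first needs an \emph{improvement of flatness} result valid for weak solutions (verifying that the present notion of weak solution satisfies the more restrictive hypotheses of \cite{MR3667700}, which requires in particular the nontrivial gradient bound $\limsup_{x\to x_0}|\nabla u(x)|\le Q(x_0)$, Proposition~\ref{prop:4} and Corollary~\ref{cor:1}). This flatness improvement is then used twice: once to show that near reduced-boundary points $\partial\{u_{r_k}>0\}$ is uniformly $C^{1,\alpha}$, so that the surface measures do converge and the blow-up limit is again a weak solution (Proposition~\ref{prop:5}); and once to show weak solutions are stationary (Proposition~\ref{prop:6}), which is needed to run the Weiss monotonicity formula and obtain $1$-homogeneity of blow-ups (Proposition~\ref{prop:7}). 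Only after all this can one classify the blow-up and conclude the viscosity condition. Your comparability $u\asymp\dist(\cdot,\partial\Omega)$ and Lipschitz cone picture are correct inputs, but they do not by themselves deliver the flux of the blow-up; the flatness-improvement machinery is what makes the surface-measure limit honest.
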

It is important to highlight the fact that the Lipschitz regularity of \(\Omega\) in
Theorem \ref{thm:1} is optimal. Namely, it is not possible to prove smoothness of the
free boundary (for weak solutions) if we assume lower regularity of \(\Omega\). Indeed, in
\cite{MR1109479, MR1891198} Lewis and Vogel showed that it is possible to construct a
domain \(\Omega\) whose boundary is \(C^{1-\varepsilon}\) for every
\(\varepsilon \in (0,1)\), and for which there exists a function \(u\) solving
\eqref{eq:inhom-alt-caff-prob} in the weak sense of Theorem \ref{thm:1} with
\(f \equiv 0\) and \(Q \equiv 1\).

This, in particular, shows that weak solutions could exhibit a more delicate behavior
at the free boundary \(\partial \Omega \cap B_1\) than viscosity solutions or energy minimizers.
Actually, a blow-up procedure at the free boundary does not preserve the weak notion
of solution in general. In fact, this only happens for those solutions for which the
set \(B_1 \setminus \Omega\) has positive density at the free boundary. Moreover, even with this
assumption, the improvement of flatness from \cite{MR3667700} is still needed to show
that the blow-up limit is a weak solution. Compare this with the setting of viscosity
solutions and energy minimizers, which pass to the limit without the need for any
major result or assumption. In a similar fashion, we also need \cite{MR3667700} to
prove that weak solutions have zero first inner variation, which is then used in the
proof of the 1-homogeneity of blow-ups.

As we see, the improvement of flatness from \cite{MR3667700} is of utmost importance
when dealing with weak solutions of \eqref{eq:inhom-alt-caff-prob} and proving
Theorem \ref{thm:1}. Unfortunately, even though \cite{MR3667700} deals with the even
more general setting of the \(p(x)\)-Laplacian, we cannot use directly their results
due to the fact that their weak notion of solution is more restrictive than ours.
Showing that our definition of weak solution fulfills the hypothesis in
\cite{MR3667700} is far from trivial and constitutes one of the main steps of the
proof of Theorem \ref{thm:1}.

As an immediate consequence of Theorem \ref{thm:1}, we are able to give an
alternative proof of \cite[Theorem 1.3]{figalli2024serrin} for the case of Lipschitz
domains.
\begin{theorem}
  \label{thm:2}
  Let \(\Omega \subset \R^n\) be a bounded Lipschitz domain. Then \(\Omega\) admits a solution
  \(u \in H^1_0(\Omega)\) to \eqref{eq:weak-serrin-prob} if and only if \(\Omega\) is a ball.
\end{theorem}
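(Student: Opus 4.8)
The plan is to deduce Theorem~\ref{thm:2} from Theorem~\ref{thm:1} together with the classical moving-planes argument of Serrin. One direction is trivial: if $\Omega = B_R$ is a ball, the explicit solution $u(x) = (R^2 - |x|^2)/(2n)$ solves \eqref{eq:weak-serrin-prob} with $c = R/n$. For the converse, suppose $\Omega$ is a bounded Lipschitz domain admitting a weak solution $u \in H^1_0(\Omega)$ to \eqref{eq:weak-serrin-prob}. First I would observe that, since $u \in H^1_0(\Omega)$, we may extend $u$ by zero outside $\Omega$ to obtain a function in $H^1_{\mathrm{loc}}(\R^n)$; and by testing \eqref{eq:weak-serrin-prob} against nonnegative $\eta$ supported inside $\Omega$ one sees $-\Delta u = 1 > 0$ in $\Omega$, so by the strong maximum principle $u > 0$ in $\Omega$. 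Thus, locally around each point of $\partial\Omega$, the pair $(\Omega, u)$ fits exactly the hypotheses of Theorem~\ref{thm:1} with $f \equiv 1 \geq 0$ and $Q \equiv c$; note $c > 0$ is forced because $\partial_\nu u = c$ must be nonnegative (as $u \geq 0$ vanishes on $\partial\Omega$ with $u > 0$ inside) and cannot vanish (else $u$ would be forced to vanish identically near that boundary point, contradicting connectedness after a unique-continuation argument). Applying Theorem~\ref{thm:1} at every boundary point, $\partial\Omega$ is a smooth hypersurface, hence $\Omega$ is a $C^\infty$ bounded domain.

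Once smoothness of $\partial\Omega$ is in hand, the weak formulation upgrades to the classical one: integrating by parts in \eqref{eq:weak-serrin-prob} against $\eta \in C^\infty_{\mathrm{c}}(\R^n)$ and using that $u$ is smooth up to the boundary (by standard elliptic regularity for the Dirichlet problem $-\Delta u = 1$ in the smooth domain $\Omega$, $u = 0$ on $\partial\Omega$) shows that the boundary term $\int_{\partial\Omega} (\partial_\nu u)\, \eta\, d\cH^{n-1}$ produced by the divergence theorem must equal $c \int_{\partial\Omega} \eta\, d\cH^{n-1}$ for all such $\eta$, hence $\partial_\nu u \equiv c$ on $\partial\Omega$ in the classical sense. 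We are therefore in the exact setting of Serrin's original theorem \cite{MR333220}: a bounded $C^2$ (indeed $C^\infty$) domain $\Omega$ together with $u \in C^2(\overline{\Omega})$ solving $-\Delta u = 1$ in $\Omega$, $u = 0$ and $\partial_\nu u = c$ on $\partial\Omega$. Serrin's method of moving planes then yields that $\Omega$ is a ball, which completes the proof.

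The main obstacle is not in this deduction — which is essentially bookkeeping once Theorem~\ref{thm:1} is available — but rather in making sure the local hypotheses of Theorem~\ref{thm:1} are genuinely met at \emph{every} boundary point, in particular the strict positivity $Q = c > 0$. The delicate point is ruling out $c = 0$: a priori $c \geq 0$ is immediate, but if $c = 0$ one needs an argument (e.g. a Hopf-lemma-type or unique-continuation consideration, or directly invoking that a weak solution with $Q \equiv 0$ and $f \equiv 1$ would force $\Omega$ to be empty near the boundary) to derive a contradiction with $\Omega$ being a nonempty bounded domain. Another minor subtlety is that Theorem~\ref{thm:1} is stated with $0 \in \partial\Omega$ and conclusions in $B_1$; one applies it after translating and rescaling so that an arbitrary boundary point sits at the origin and a small ball around it is mapped to $B_1$, and then one patches the local smooth pieces together.
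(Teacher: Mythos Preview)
Your proposal is correct and follows essentially the same approach as the paper: apply Theorem~\ref{thm:1} to upgrade $\partial\Omega$ to $C^\infty$, then invoke Serrin's classical moving-planes result \cite{MR333220}. One minor simplification over your discussion of the ``delicate point'' $c>0$: taking $\eta\equiv 1$ on a neighborhood of $\overline{\Omega}$ in \eqref{eq:weak-serrin-prob} immediately gives $|\Omega| = c\,\cH^{n-1}(\partial\Omega)$, so no Hopf-lemma or unique-continuation argument is needed.
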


\subsection{Poisson kernel}

The regularity theory for the one-phase Bernoulli problem
\eqref{eq:inhom-alt-caff-prob} can also be used to characterize the regularity of
Lipschitz domains in terms of their Poisson kernel.

Given a bounded Lipschitz domain \(\Omega \subset \R^n\), consider the Green function
\(G_x\) for \(\Omega\) with pole at \(x\). Then, \(G_x\) is a positive harmonic function
in \(\Omega \setminus \{x\}\) which is continuous up to the boundary of \(\Omega\) and has zero
boundary value on \(\partial \Omega\). Moreover, by Dahlberg's theorem \cite[Theorem
13.10]{tolsa2025prats}, the Poisson kernel \(P_x = \partial_{\nu}G_x\), where \(\nu\) is the
inward normal vector to \(\Omega\), exists \(\cH^{n-1}\)-a.e. on \(\partial \Omega\) and is positive.

The regularity of \(P_x\) is deeply linked to the regularity of the domain
\(\Omega\). On the one hand, if \(\Omega\) is smooth, then
\(G_x \in C^{\infty}(\overline{\Omega} \setminus \{x\})\) and \(P_x\) is smooth for any
\(x \in \Omega\). Thus, the smoothness of \(\Omega\) implies the smoothness of
\(P_x\). On the other hand, one may wonder if the converse is also true. This has
been widely studied in the last decades (see e.g. \cite{MR1096396, MR3540451,
  MR3641880, MR4169053}) and is now well understood. In particular, the following
result has been known for a long time.
\begin{theorem}
  \label{thm:3}
  Let \(\Omega \subset \R^n\) be a bounded Lipschitz domain and \(x \in \Omega\). Assume that the
  Poisson kernel \(P_x\) for \(\Omega\) with pole at \(x\) equals
  \(Q|_{\partial\Omega}\) for some \(Q \in C^{\infty}(\R^n)\). Then \(\partial \Omega\) is \(C^{\infty}\).
\end{theorem}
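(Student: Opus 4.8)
The plan is to recognise the Green function $G_x$ as a weak solution of the homogeneous one-phase problem in the sense of Theorem~\ref{thm:1}, and then to invoke that theorem. Recall the standard properties of $G_x$ on a bounded Lipschitz domain: it is positive and harmonic in $\Omega\setminus\{x\}$, it is continuous up to $\partial\Omega$ with $G_x=0$ on $\partial\Omega$, after extension by zero it lies in $H^1_{\mathrm{loc}}(\R^n\setminus\{x\})$, and, by Dahlberg's theorem \cite[Theorem~13.10]{tolsa2025prats}, its inner normal derivative $P_x=\partial_\nu G_x$ exists $\cH^{n-1}$-a.e.\ on $\partial\Omega$ and is positive there. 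In particular $Q\ge 0$ on $\partial\Omega$, and, consistently with the standing positivity hypothesis on $Q$, we may assume $Q>0$ on $\partial\Omega$.

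First I would localise. Fix $y_0\in\partial\Omega$. Since $x$ is an interior point, there is $r>0$ with $x\notin\overline{B_r(y_0)}$ and $Q>0$ on $\overline{B_r(y_0)}$. The affine change of variables $z\mapsto(z-y_0)/r$ turns $G_x$ into the harmonic function $\tilde u(z):=r^{-1}G_x(y_0+rz)$, whose inner normal derivative on the rescaled boundary is $z\mapsto Q(y_0+rz)=:\tilde Q(z)$, with $\tilde Q\in C^\infty(B_1)$ and $\tilde Q>0$; moreover this change of variables sends the pole $x$ outside $B_1$. Writing $\tilde\Omega:=(\Omega-y_0)/r$ (again a bounded Lipschitz domain with $0\in\partial\tilde\Omega$), it suffices to prove that $\partial\tilde\Omega\cap B_1$ is a smooth surface.

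The crucial step is to verify that $\tilde u$, extended by zero to $B_1\setminus\tilde\Omega$, solves \eqref{eq:inhom-alt-caff-prob} on $B_1$ in the weak sense of Theorem~\ref{thm:1} with $f\equiv 0$ and right-hand side $\tilde Q$. The properties $\tilde u\ge 0$ and $\tilde u=0$ on $B_1\setminus\tilde\Omega$ are immediate, and $\tilde u\in H^1_{\mathrm{loc}}(B_1)$ is clear in the interior and, near $\partial\tilde\Omega$, follows from the boundary Hölder continuity of harmonic functions on Lipschitz domains together with a Caccioppoli estimate. The identity to be checked is
\[
  -\int_{B_1}\nabla\tilde u\cdot\nabla\eta\,dx=\int_{\partial\tilde\Omega\cap B_1}\tilde Q\,\eta\,d\cH^{n-1},\qquad\forall\,\eta\in C^\infty_{\mathrm{c}}(B_1).
\]
Since $\nabla\tilde u=0$ a.e.\ on $B_1\setminus\tilde\Omega$, the left-hand side equals $-\int_{\tilde\Omega\cap B_1}\nabla\tilde u\cdot\nabla\eta\,dx$, and the identity is exactly Green's second identity on the Lipschitz open set $\tilde\Omega\cap B_1$: there $\tilde u$ is harmonic (the pole has been removed), $\eta$ vanishes near $\partial B_1$, and along $\partial\tilde\Omega\cap B_1$ the outer normal of $\tilde\Omega\cap B_1$ is $-\tilde\nu$, so the only surviving boundary term is $-\int_{\partial\tilde\Omega\cap B_1}\eta\,\partial_{\tilde\nu}\tilde u\,d\cH^{n-1}=-\int_{\partial\tilde\Omega\cap B_1}\tilde Q\,\eta\,d\cH^{n-1}$. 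I expect this to be the main obstacle: on a merely Lipschitz domain the divergence theorem is not available for generic $H^1$ functions, and the rigorous justification has to use the finer information from Dahlberg's theory — that the nontangential maximal function of $\nabla G_x$ belongs to $L^2$ of the boundary and that $\nabla G_x$ has nontangential limits $\cH^{n-1}$-a.e.\ — which is precisely what legitimises both the a.e.\ existence of $P_x$ and the Green/Poisson representation. Concretely I would exhaust $\tilde\Omega$ by smooth subdomains $\tilde\Omega_j\uparrow\tilde\Omega$, write the classical Green identity on $\tilde\Omega_j\cap B_1$, and pass to the limit, the convergence of the boundary integrals being controlled by that $L^2$ bound.

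Granting this, $\tilde u$ is a weak solution as in Theorem~\ref{thm:1} with $f\equiv 0$ (smooth and nonnegative) and $\tilde Q\in C^\infty(B_1)$, $\tilde Q>0$, so Theorem~\ref{thm:1} yields that $\partial\tilde\Omega\cap B_1$ is locally a smooth surface in $B_1$. Undoing the rescaling, $\partial\Omega$ is a smooth surface in a neighbourhood of $y_0$; since $y_0\in\partial\Omega$ was arbitrary, $\partial\Omega$ is $C^\infty$.
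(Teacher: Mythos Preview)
Your proposal is correct and follows essentially the same approach as the paper: localise near an arbitrary boundary point away from the pole, verify that $G_x$ (extended by zero) satisfies the weak formulation of Theorem~\ref{thm:1} with $f\equiv 0$, and conclude. The only difference is in justifying the key identity: the paper obtains \eqref{eq:green-fun-weak-sol} directly by quoting the distributional relation $-\int_\Omega\nabla G_x\cdot\nabla\eta=-\eta(x)+\int\eta\,d\omega^x$ from \cite[Lemma~7.6]{tolsa2025prats} together with Dahlberg's theorem $d\omega^x=P_x\,d\cH^{n-1}$, which is cleaner than your smooth-exhaustion argument, but the overall strategy is the same.
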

Although this result is usually attributed to \cite{MR990856} (see, e.g.
\cite{MR1096396}), the results in \cite{MR990856} are for viscosity solutions and it
is not obvious how to apply them to deduce Theorem \ref{thm:3}. Our Theorem
\ref{thm:1} clarifies this issue, as it applies directly to weak solutions.

Indeed, in this context, one can show that the Green function \(G_x\) is a weak
solution of \eqref{eq:inhom-alt-caff-prob} (with \(f \equiv 0\)). For this, by \cite[Lemma
7.6]{tolsa2025prats}
\[
  -\int_\Omega \nabla G_x\cdot\nabla\eta = -\eta(x) + \int \eta d\omega^x \qquad \forall\eta \in C^{\infty}_{\mathrm{c}}(\R^n),
\]
where \(\omega^x\) is the harmonic measure for \(\Omega\) with pole at \(x\). Since
\(d\omega^x = P_xd\cH^{n-1}\) by Dahlberg's theorem, and localizing in a ball \(B_r\) such
that \(x \not\in B_r\) (we assume \(0 \in \partial \Omega\)), we have
\begin{equation}
  \label{eq:green-fun-weak-sol}
  -\int_{B_r} \nabla G_x\cdot\nabla\eta = \int_{\partial \Omega \cap B_r} P_x\eta d\cH^{n-1} \qquad \forall\eta \in C^{\infty}_{\mathrm{c}}(B_r).
\end{equation}
That is, \(G_x\) (extended by zero in \(B_r \setminus \Omega\)) solves
\eqref{eq:inhom-alt-caff-prob} in the weak sense with \(Q = P_x\). This allows us to
apply Theorem \ref{thm:1} to \(G_x\) to recover Theorem \ref{thm:3}.
\begin{remark}
  Theorems \ref{thm:1} and \ref{thm:3} still hold if we assume less regularity of
  \(f\) and \(Q\). Precisely, the results from \cite{carducci2025regularity} imply
  that if \(f \in C^{k-1,\alpha}(B_1)\) and \(Q \in C^{k,\alpha}(B_1)\) with
  \(k \geq 2\) and \(\alpha > 0\), then \(\partial\{u > 0\} \cap B_1\) and
  \(\partial\Omega\) are \(C^{k+1,\alpha}\) respectively.
\end{remark}
\begin{remark}
  Even though Theorem \ref{thm:3} only needs \(P_x\) to be smooth for one point
  \(x \in \Omega\), once we know \(\partial \Omega\) is smooth it follows that
  \(P_x\) will be smooth for all \(x \in \Omega\).
\end{remark}

Lastly, let us expand a bit more on the domains constructed in \cite{MR1109479,
  MR1891198} mentioned previously. In \cite{MR1109479}, Lewis and Vogel were able to
construct a non-trivial domain \(\Omega\) with boundary homeomorphic to the unit sphere,
and such that the Poisson kernel for \(\Omega\) at \(0 \in \Omega\) is given by a constant
\(a\). Moreover, they showed that both the homeomorphism and its inverse can be taken
to be \(C^{1-\varepsilon}\) for every \(\varepsilon \in (0,1)\). Thus, even though the Green function
\(G_0\) for this domain is a weak solution of \eqref{eq:inhom-alt-caff-prob} near
\(\partial\Omega\) with \(Q \equiv a \in C^{\infty}\), the boundary of \(\Omega\) is not smooth. In other words,
\cite{MR1109479, MR1891198} show that the Lipschitz regularity of \(\Omega\) in Theorem
\ref{thm:3} is optimal in terms of proving smoothness of \(\partial\Omega\).

\subsection{Strategy of the proof}

As we mentioned previously, an important ingredient we use in the proof of Theorem
\ref{thm:1} is the improvement of flatness from \cite{MR3667700}. To be able to apply
it, we need to generalize some results from \cite{MR618549} to show that our
definition of weak solution fulfills the more restrictive hypothesis in
\cite{MR3667700}.

Once we have an improvement of flatness result, we obtain as a consequence the fact
that blow-up limits preserve the notion of weak solution. Importantly, this only
holds if we also assume that the set \(B_1 \setminus \Omega\) has positive density at free
boundary points. This assumption, which is always satisfied by Lipschitz free
boundaries, is needed to rule out degenerate limits such as \(|x\cdot e_n|\) which, a
priori, can happen without the density condition. Also as a consequence of the
improvement of flatness we obtain that the first inner variation of weak solutions
vanishes. We then use this result to prove by means of a Weiss-type monotonicity
formula the 1-homogeneity of blow-up limits.

At this point we have all the main ingredients of the proof of Theorem \ref{thm:1}.
The last step of the proof can be done in two different ways, both using all the
results mentioned above. The first one consists in classifying the 1-homogenous weak
solutions obtained as blow-up limits at free boundary points. To do so, we follow the
ideas of \cite{MR3353802} to prove that said limits are flat. From there, the
improvement of flatness allows us to conclude the proof. Alternatively, we can use
the already known regularity theory for viscosity solutions proven by De Silva in
\cite{MR2813524}. Assuming once more the density condition of the set
\(B_1 \setminus \Omega\), we are able to show that weak solutions of
\eqref{eq:inhom-alt-caff-prob} are also viscosity solutions. Thus, by
\cite{MR2813524} we obtain the smoothness of the free boundary.

\subsection{Organization of the paper}

The paper is organised as follows. In Section
\ref{sec:weak-solutions-basic-properties} we define the notion of weak solution for
\eqref{eq:inhom-alt-caff-prob} and prove some its basic properties. In Section
\ref{sec:impr-flatn-weak} we show that our definition of weak solution satisfies the
required hypothesis to apply the improvement of flatness result from
\cite{MR3667700}. In Section \ref{sec:blow-ups-weak} we study the blow-up sequences
of weak solutions at the free boundary. In Section \ref{sec:homogeneity-blow-ups} we
prove the 1-homogeneity of blow-up limits. In Section \ref{sec:lip-implies-smooth} we
show the smoothness of Lipschitz free boundaries with smooth data \(f,Q\). Lastly, in
Section \ref{sec:main-results} we prove Theorem \ref{thm:1} and its consequences
Theorems \ref{thm:2} and \ref{thm:3}.

\subsection{Acknowledgements}

The authors were supported by the European Research Council under the Grant Agreement
No.~101123223 (SSNSD), and by AEI project PID2024-156429NB-I00 (Spain). Moreover,
X.~Ros-Oton was supported by the AEI grant RED2024-153842-T (Spain), by the AEI–DFG
project PCI2024-155066-2 (Spain–Germany), and by the Spanish State Research Agency
through the Mar\'ia de Maeztu Program for Centers and Units of Excellence in R\&D
(CEX2020-001084-M).

\section{Weak solutions and basic properties}
\label{sec:weak-solutions-basic-properties}

To study \eqref{eq:inhom-alt-caff-prob} we consider the following equivalent
formulation:
\begin{equation}
  \label{eq:inhom-alt-caff-free-bound-form}
  \left\{
  \begin{aligned}
    -\Delta u &= f \quad&&\text{in } \{u > 0\} \cap B_1 \\
    u &= 0 \quad&&\text{on } \partial\{u > 0\} \cap B_1 \\
    \partial_{\nu}u &= Q \quad&&\text{on } \partial\{u > 0\} \cap B_1.
  \end{aligned}
  \right.
\end{equation}
Here, \(u:B_1 \to \R\) is non-negative and \(f\) and \(Q\) are given smooth functions
in \(B_1\). As in \cite{MR618549}, throughout the paper we assume
\[
  0 < Q_{\mathrm{min}} \leq Q \leq Q_{\mathrm{max}} < \infty,
\]
and we consider the case \(f \geq 0\). Observe that \(f \equiv 0\) corresponds to the
classical homogeneous case studied by Alt and Caffarelli in \cite{MR618549}.
\begin{definition}
  \label{defn:1}
  A function \(u\) is a weak solution of \eqref{eq:inhom-alt-caff-free-bound-form} in
  \(B_1\) if it satisfies the following conditions:
  \begin{enumerate}
  \item\label{item:1} \(u \in C(B_1)\), \(u \ge 0\) and \(-\Delta u = f\) in the open set
    \(\{u > 0\}\).
  \item\label{item:2} For any open set \(D \subset\subset B_1\) there exist constants
    \(0 < c_D \le C_D < \infty\) such that for any ball \(B_r(x) \subset D\) with
    \(x \in \partial\{u > 0\}\) we have
    \[
      c_D \le \frac{1}{r}\fint_{\partial B_r(x)} u d\cH^{n-1} \le C_D.
    \]
  \item\label{item:3} \(u\) solves the equation
    \[
      \Delta u = Q\cH^{n-1}\mres\partial_{\mathrm{red}}\{u > 0\} - f\chi_{\{u > 0\}}
    \]
    in \(B_1\) in the distributional sense, that is,
    \[
      -\int_{B_1}\nabla u\cdot\nabla\eta dx =
      \int_{\partial_{\mathrm{red}}\{u > 0\} \cap B_1} Q\eta d\cH^{n-1} - \int_{B_1} f\chi_{\{u > 0\}}\eta dx
      \quad\text{for any } \eta \in C^{\infty}_{\mathrm{c}}(B_1).
    \]
  \end{enumerate}
\end{definition}
\begin{remark}
  If \(u:B_1 \to \R\) fulfils condition \autoref{item:1} of Definition \ref{defn:1},
  then \(u \in H^1_{\mathrm{loc}}(B_1)\). This follows from a similar argument to
  \cite[Remark 4.2]{MR618549}, adjusting for the fact that \(-\Delta u = f\).
\end{remark}
\begin{remark}
  \label{rem:1}
  The second condition in Definition \ref{defn:1} is a regularity and non-degeneracy
  condition. Indeed, let \(u:B_1 \to \R\) be a function satisfying conditions
  \autoref{item:1} and \autoref{item:2}. Consider an open set
  \(D \subset\subset B_1\) and let \(B_r(x) \subset D\) be any ball with
  \(x \in \partial\{u > 0\}\). Integrating in spherical coordinates in \autoref{item:2} yields
  \[
    c \leq \frac{1}{r}\fint_{B_r(x)} u dx \leq C
  \]
  for some constants \(c\), \(C\) depending on \(D\). In particular,
  \[
    c \leq \frac{1}{r}\fint_{B_r(x)} u dx \leq \frac{1}{r}\sup_{B_r(x)} u,
  \]
  and we see that the lower bound in \autoref{item:2} corresponds to a non-degeneracy
  condition for \(u\). Moreover, as we will see in Proposition \ref{prop:1}, the
  upper bound in \autoref{item:2} implies that \(u\) is locally Lipschitz in \(B_1\).
  Thus, we can write
  \[
    \tilde{c} \leq \frac{1}{r}\sup_{B_r(x)} u \leq \tilde{C},
  \]
  where \(\tilde{c}\) and \(\tilde{C}\) are positive constants depending on
  \(D\).
\end{remark}
\begin{remark}
  \label{rem:2}
  By Remark \ref{rem:1}, if \(u\) satisfies the first two conditions of Definition
  \ref{defn:1}, then \(u\) satisfies (1) and (2) of \cite[Definition 2.2]{MR3667700}.
  Thus, \cite[Theorem 2.1, Remark 2.3]{MR3667700} imply that the set \(\{u > 0\}\)
  has locally finite perimeter in \(\Omega\). Hence, the reduced boundary
  \(\partial_{\mathrm{red}}\{u > 0\}\) in \autoref{item:3} is well defined.
\end{remark}
\begin{remark}
  \label{rem:3}
  As one would expect, any weak solution with smooth free boundary is a
  classical solution. Indeed, if \(\partial\{u > 0\}\) is regular enough (say
  \(C^{1,\alpha}\)), then
  \(\partial\{u > 0\} = \partial_{\mathrm{red}}\{u > 0\}\) and by Schauder estimates
  \(u \in C^{1,\alpha}(\overline{\{u > 0\}} \cap B_1)\). Thus, given
  \(\eta \in C^{\infty}_{\mathrm{c}}(B_1)\), an integration by parts yields
  \begin{align*}
    \int_{B_1} \nabla u \cdot \nabla\eta dx &= -\int_{\partial\{u > 0\} \cap B_1} \eta\nabla u\cdot\nu d\cH^{n-1}
                        - \int_{\partial\{u > 0\} \cap \partial B_1} \eta\nabla u\cdot\nu d\cH^{n-1}
                        - \int_{\{u > 0\}} \eta\Delta u dx \\
                      &= -\int_{\partial\{u > 0\} \cap B_1} \eta\partial_{\nu}u d\cH^{n-1} + \int_{B_1} f\chi_{\{u > 0\}}\eta dx,
  \end{align*}
  where \(\nu\) is the inward normal vector of \(\{u > 0\}\). Consequently, condition
  \autoref{item:3} implies that
  \[
    \int_{\partial\{u > 0\} \cap B_1} (Q - \partial_{\nu}u)\eta d\cH^{n-1} = 0 \quad\text{for all } \eta \in C^{\infty}_{\mathrm{c}}(B_1)
  \]
  and therefore \(|\nabla u| = \partial_{\nu}u = Q\) on \(\partial\{u > 0\} \cap B_1\).
\end{remark}
\begin{proposition}
  \label{prop:1}
  Let \(u:B_1 \to \R\) be a function satisfying condition \autoref{item:1} and the
  upper bound of condition \autoref{item:2} of Definition \ref{defn:1}. Then \(u\) is
  locally Lipschitz in \(B_1\).
\end{proposition}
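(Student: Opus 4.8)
The plan is to reduce the statement to a pointwise gradient bound on $\{u>0\}\cap D$ for every $D\subset\subset B_1$, the crux being a linear growth estimate for $u$ away from the free boundary. Once we know $|\nabla u|\le C$ a.e.\ on $D\cap\{u>0\}$ we are done: since $u\in H^1_{\loc}(B_1)$ by the previous remark and $\nabla u=\chi_{\{u>0\}}\nabla u$ a.e.\ (because $u=u^{+}$, as $u\ge0$), we get $\nabla u\in L^\infty(D)$ with the same bound, and as $u$ is continuous, integrating $\nabla u$ along segments inside any ball contained in $D$ shows $u$ is Lipschitz there. It therefore suffices to bound $|\nabla u(x_0)|$ at an arbitrary $x_0\in D\cap\{u>0\}$; write $d:=\dist(x_0,\partial\{u>0\})$, fix nested sets $D\subset\subset D'\subset\subset B_1$ and a radius $r_0>0$ small compared with $\dist(\overline D,\partial D')$, chosen so that all the balls used below lie in $D'$, and recall $\|u\|_{L^\infty(\overline{D'})}<\infty$ and $\|f\|_{L^\infty(D')}<\infty$ since $\overline{D'}\subset B_1$ is compact.

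The key is the linear growth estimate: there is $C$, depending on $n$, $D'$, $\|f\|_{L^\infty(D')}$ and the constant $C_{D'}$ from \autoref{item:2}, with
\[
u(z)\le C\,\dist(z,\partial\{u>0\})\qquad\text{whenever }\dist(z,\overline D)<r_0\text{ and }e:=\dist(z,\partial\{u>0\})<3r_0.
\]
To prove it, pick $w_0\in\partial\{u>0\}$ with $|z-w_0|=e$ (so $B_{2e}(w_0)\subset D'$ by the choice of $r_0$), and let $h$ solve $-\Delta h=f$ in $B_{2e}(w_0)$ with $h=u$ on $\partial B_{2e}(w_0)$. Decompose $h=v+p$, where $v$ is harmonic with $v=u$ on $\partial B_{2e}(w_0)$ and $-\Delta p=f$ in $B_{2e}(w_0)$, $p=0$ on the boundary. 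Since $u\ge0$ the maximum principle gives $v\ge0$; since $f\ge0$, comparison with the paraboloid $\tfrac{\|f\|_\infty}{2n}\big((2e)^2-|x-w_0|^2\big)$ gives $0\le p\le C_n\|f\|_{L^\infty(D')}e^{2}$; hence $h\ge0$. Now $-\Delta(u-h)=0$ in the open set $U:=\{u>0\}\cap B_{2e}(w_0)$, and $\partial U$ consists of points of $\partial B_{2e}(w_0)$ (where $u=h$) and points of $\partial\{u>0\}$ (where $u=0\le h$ by continuity of $u$), so the maximum principle yields $u\le h$ in $U$, and trivially $u=0\le h$ in $B_{2e}(w_0)\setminus U$. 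Finally, by the mean value property and the upper bound in \autoref{item:2},
\[
v(w_0)=\fint_{\partial B_{2e}(w_0)}u\,d\cH^{n-1}\le 2C_{D'}\,e,
\]
and Harnack's inequality for the nonnegative harmonic function $v$ on $B_{2e}(w_0)$ (both $z$ and $w_0$ lying in $B_{3e/2}(w_0)$) gives $v(z)\le C_n v(w_0)$. Combining, $u(z)\le h(z)=v(z)+p(z)\le 2C_nC_{D'}e+C_n\|f\|_{L^\infty(D')}e^{2}\le C\,e$.

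With the linear growth estimate in hand we bound $|\nabla u(x_0)|$. Put $\rho:=\min\{d/2,r_0\}$, so $\overline{B_\rho(x_0)}\subset\{u>0\}\cap D'$ and $-\Delta u=f$ there; the standard interior gradient estimate for $-\Delta u=f$ gives $|\nabla u(x_0)|\le C_n\rho^{-1}\sup_{B_\rho(x_0)}u+C_n\rho\|f\|_{L^\infty(D')}$. If $\rho=r_0$ (i.e.\ $d\ge 2r_0$) we bound $\sup_{B_{r_0}(x_0)}u\le\|u\|_{L^\infty(\overline{D'})}$. If $\rho=d/2$ (i.e.\ $d\le 2r_0$), then every $z\in B_{d/2}(x_0)$ satisfies $\dist(z,\partial\{u>0\})\le|z-w_0|\le\tfrac32 d<3r_0$ and $\dist(z,\overline D)<r_0$, so the linear growth estimate gives $\sup_{B_{d/2}(x_0)}u\le\tfrac32 Cd$ and hence $|\nabla u(x_0)|\le 3C_nC+C_n\|f\|_{L^\infty(D')}d$. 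In both cases $|\nabla u(x_0)|$ is bounded by a constant depending only on $n$, $D'$, $\|f\|_{L^\infty(D')}$, $C_{D'}$ and $\|u\|_{L^\infty(\overline{D'})}$, which proves the gradient bound and, by the first paragraph, the proposition.

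The main obstacle is the linear growth estimate, and inside it the passage from the \emph{averaged} bound of \autoref{item:2} to a \emph{pointwise} bound on $u$: this is exactly what the decomposition $h=v+p$, the inequality $u\le h$, and Harnack's inequality deliver, with the sign hypotheses $f\ge0$ and $u\ge0$ entering crucially to guarantee $h\ge0$ and $u\le h$. The only other point requiring care is the bookkeeping of the nested domains $D\subset\subset D'\subset\subset B_1$, chosen so that every ball $B_{2e}(w_0)$ stays inside the fixed set $D'$ on which the constant $C_{D'}$ is available; this is the routine adaptation of the classical Alt--Caffarelli argument (cf.\ \cite{MR618549}) to the inhomogeneity $-\Delta u=f$.
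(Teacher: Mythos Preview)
Your proof is correct, but it takes a longer route than the paper's. The paper avoids the pointwise linear growth estimate entirely: instead of the $L^\infty$ interior gradient estimate, it uses the $L^1$ version
\[
|\nabla u(x_0)|\le C_n\Big(\tfrac{1}{r^{n+1}}\|u\|_{L^1(B_r(x_0))}+r\|f\|_{L^\infty}\Big)=C_n\Big(\tfrac{1}{r}\fint_{B_r(x_0)}u+r\|f\|_{L^\infty}\Big),
\]
and then simply observes that $B_r(x_0)\subset B_{2r}(y_0)$ (where $y_0\in\partial\{u>0\}$ is the nearest free boundary point and $r=|x_0-y_0|$), so $\tfrac{1}{r}\fint_{B_r(x_0)}u\le \tfrac{2^n}{r}\fint_{B_{2r}(y_0)}u\le 2^{n+1}C_{D'}$ by the volume-integrated form of \autoref{item:2}. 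This is a two-line argument once the case split (far/close to the free boundary) is made, and it requires no sign condition on $f$.

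Your approach instead upgrades the averaged bound of \autoref{item:2} to the pointwise bound $u(z)\le C\,\dist(z,\partial\{u>0\})$ via a harmonic replacement and Harnack's inequality; this is a genuine (and correct) intermediate result, of independent interest, but it is more work than necessary here. Note also that your use of $f\ge0$ (to get $p\ge0$ and hence $h\ge0$) is not essential: the same comparison would give $u\le h+C\|f\|_\infty e^2$ in general, so the sign hypothesis is a convenience rather than a requirement. In summary, both arguments are sound; the paper's is shorter because the $L^1$ gradient estimate matches the averaged hypothesis \autoref{item:2} directly, whereas you took a detour through the $L^\infty$ world.
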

\begin{proof}
  We will prove that for each small \(\delta > 0\) there is a constant \(C_{\delta}\) such that
  \(\|\nabla u\|_{L^{\infty}(B_{1-\delta})} \leq C_{\delta}\). Since \(\nabla u = 0\) in the interior of
  \(\{u = 0\} \cap B_{1-\delta}\), it is enough to estimate the gradient of \(u\) in
  \(\{u > 0\} \cap B_{1-\delta}\). Let \(x_0 \in \{u > 0\} \cap B_{1-\delta}\). We follow the proof of
  \cite[Lemma 3.5]{MR4807210} and consider two cases:
  \begin{itemize}
  \item If \(\dist(x_0,\partial\{u > 0\}) \geq \delta/6\), then \(-\Delta u = f\) in the ball
    \(B_{\delta/6}(x_0)\) and so, by classical Schauder estimates we have
    \begin{align*}
      |\nabla u(x_0)| &\leq C_n\left(\frac{1}{\delta}\|u\|_{L^{\infty}(B_{\delta/6}(x_0))} + \delta\|f\|_{L^{\infty}(B_{\delta/6}(x_0))}\right) \\
                 &\leq C_n\left(\frac{1}{\delta}\|u\|_{L^{\infty}(B_{1-\frac{\delta}{2}})} + \delta\|f\|_{L^{\infty}(B_{1-\frac{\delta}{2}})}\right).
    \end{align*}
    Here we have used that
    \(B_{\delta/6}(x_0) \subset B_{1-\frac{5}{6}\delta} \subset B_{1-\frac{\delta}{2}}\) since \(x_0 \in B_{1-\delta}\).
  \item If \(\dist(x_0,\partial\{u > 0\}) < \delta/6\), then we let
    \(y_0 \in \partial\{u > 0\}\) be the point that realizes the distance to the free boundary
    and set
    \[
      r \coloneqq \dist(x_0,\partial\{u > 0\}) = |x_0 - y_0|.
    \]
    Since \(-\Delta u = f\) in \(B_r(x_0)\), we can use interior estimates once more to
    obtain
    \[
      \|\nabla u\|_{L^{\infty}(B_{r/2}(x_0))} \leq C_n\left(\frac{1}{r^{n+1}}\|u\|_{L^1(B_r(x_0))} + r\|f\|_{L^{\infty}(B_r(x_0))}\right).
    \]
    Hence,
    \begin{align*}
      |\nabla u(x_0)| &\leq C_n\left(\frac{1}{r^{n+1}}\|u\|_{L^1(B_r(x_0))} + r\|f\|_{L^{\infty}(B_r(x_0))}\right) \\
                 &= C_n\left(\frac{1}{r}\fint_{B_r(x_0)} u dx + r\|f\|_{L^{\infty}(B_r(x_0))}\right) \\
                 &\leq C_n\left(\frac{2^n}{r}\fint_{B_{2r}(y_0)} u dx + \frac{\delta}{6}\|f\|_{L^{\infty}(B_{1-\frac{5}{6}\delta})}\right) \\
                 &\leq C_n\left(2^{n+1}C_{B_{1-\frac{\delta}{2}}} + \frac{\delta}{6}\|f\|_{L^{\infty}(B_{1-\frac{\delta}{2}})}\right).
    \end{align*}
    For the last inequality we have used the fact that for any open set
    \(D \subset\subset B_1\), any \(x \in \partial\{u > 0\} \cap D\) and any ball \(B_r(x) \subset D\) we have
    \[
      \frac{1}{r}\fint_{B_r(x)} u dx \leq C_D
    \]
    for some constant \(C_D\) depending on \(D\). This is an immediate consequence of
    integrating in spherical coordinates in \autoref{item:2}.
  \end{itemize}
\end{proof}
\begin{remark}
  \label{rem:4}
  In the classical Alt-Caffarelli problem \(f \equiv 0\), \(Q \equiv 1\), weak solutions are
  stationary solutions (see Definition \ref{defn:2}) by \cite[Theorem
  5.1]{MR1620644}. Also, minimizers of \(\mathcal{F}\) are both viscosity and weak solutions by
  \cite{MR4807210, MR618549} respectively. In other words, we have the following
  relations between the different notions of solutions:
  \[
    \begin{array}{ccccc}
      \text{Minimizer} & \implies & \text{Weak sol.} & \implies & \text{Stationary sol.} \\
      \big\Downarrow &&&& \\
      \text{Viscosity sol.} &&&&
    \end{array}
  \]
  Note that weak solutions are a proper subclass of stationary solutions. Indeed, it
  is easy to check that the function \(u(x) = |x_n|\) is a stationary solution in
  \(\R^n\) for \(f \equiv 0\) and \(Q \equiv 1\). However, \(u\) is not a weak solution since
  \(\partial_{\mathrm{red}}\{u > 0\} = \emptyset\) and condition \autoref{item:3} would imply that
  \(u\) is harmonic in \(\R^n\). One of the main results of this paper is to show
  that for the more general case \(f \geq 0\), weak solutions are stationary solutions
  (see Proposition \ref{prop:6}) and, assuming an additional condition on the free
  boundary, viscosity solutions (see Proposition \ref{prop:10}).
\end{remark}
\begin{lemma}
  \label{lem:1}
  Assume that \(u:B_1 \to \R\) is a function satisfying conditions \autoref{item:1} and
  \autoref{item:2} of Definition \ref{defn:1}. Let \(B_{r_k}(x_k) \subset B_1\) be a
  sequence of balls with \(r_k \to 0\), \(x_k \to x_0 \in B_1\) and \(u(x_k) = 0\).
  Consider the blow-up sequence
  \[
    u_{r_k}(x) \coloneqq \frac{1}{r_k}u(x_k + r_kx).
  \]
  Then there exists a Lipschitz continuous blow-up limit \(u_0:\R^n \to \R\)
  such that, up to subsequence,
  \begin{enumerate}
  \item\label{item:4} \(u_{r_k} \rightarrow u_0\) in \(C^{0,\alpha}_{\mathrm{loc}}(\R^n)\) for every
    \(0 < \alpha < 1\),
  \item\label{item:5} \(u_{r_k} \rightarrow u_0\) in \(H^1_{\mathrm{loc}}(\R^n)\),
  \item\label{item:6} \(\chi_{\{u_{r_k} > 0\}} \rightarrow \chi_{\{u_0 > 0\}}\) in
    \(L^1_{\mathrm{loc}}(\R^n)\),
  \item\label{item:7} \(\partial\{u_{r_k} > 0\} \rightarrow \partial\{u_0 > 0\}\) locally in Hausdorff distance in
    \(\R^n\),
  \item\label{item:8} If \(x_k \in \partial\{u > 0\}\), then \(0 \in \partial\{u_0 > 0\}\),
  \item\label{item:9} \(\Delta u_0 = 0\) in \(\{u_0 > 0\}\).
  \end{enumerate}
  Moreover, \(u_0\) satisfies condition \autoref{item:2} of Definition \ref{defn:1}
  with the same constants as \(u\).
\end{lemma}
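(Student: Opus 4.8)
The plan is to obtain the blow-up limit by Arzelà–Ascoli and then bootstrap the mode of convergence. Since $u$ satisfies \autoref{item:1} and \autoref{item:2} of Definition \ref{defn:1}, Proposition \ref{prop:1} makes $u$ locally Lipschitz; fixing $R>0$, for $k$ large the balls $B_{Rr_k}(x_k)$ lie in a fixed neighborhood of $x_0$ on which $u$ has a Lipschitz constant $L$, and since $\nabla u_{r_k}(x)=\nabla u(x_k+r_kx)$ the rescalings $u_{r_k}$ are $L$-Lipschitz on $B_R$, uniformly in $k$. Together with $u_{r_k}(0)=r_k^{-1}u(x_k)=0$ this bounds $u_{r_k}$ in $L^{\infty}(B_R)$ by $LR$, so a diagonal Arzelà–Ascoli argument over $R\to\infty$ yields a subsequence converging locally uniformly to an $L$-Lipschitz function $u_0\colon\R^n\to\R$, and interpolating the uniform $C^{0,1}$ bound against the $C^0$ convergence gives \autoref{item:4}. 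I would also record that rescaling $-\Delta u=f$ in $\{u>0\}$ gives $-\Delta u_{r_k}=f_{r_k}$ in $\{u_{r_k}>0\}$ with $f_{r_k}(x):=r_kf(x_k+r_kx)\to0$ uniformly on compacts, and that by scale invariance $u_{r_k}$ itself satisfies \autoref{item:2} with the constants of $u$ (for $k$ large the relevant ball is a rescaled copy of one inside $D$); the last assertion of the lemma then follows by passing to the limit along free boundary points, once \autoref{item:7} is available.

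For \autoref{item:5} and \autoref{item:9}: the uniform bound on $\nabla u_{r_k}$ in $L^{\infty}_{\mathrm{loc}}$ gives, along a further subsequence and hence for the whole sequence, $\nabla u_{r_k}\rightharpoonup\nabla u_0$ weakly in $L^2_{\mathrm{loc}}$ (test against smooth vector fields and use $u_{r_k}\to u_0$). For \autoref{item:9}, given $\phi\in C^{\infty}_{\mathrm c}(\{u_0>0\})$ one has $\supp\phi\subset\{u_{r_k}>0\}$ for $k$ large, so $\int u_{r_k}\Delta\phi\,dx=-\int f_{r_k}\phi\,dx\to0$ while the left-hand side tends to $\int u_0\Delta\phi\,dx$, giving $\Delta u_0=0$ in $\{u_0>0\}$. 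To upgrade the convergence to strong $H^1_{\mathrm{loc}}$ I would test $-\Delta u_{r_k}=f_{r_k}$ in the open set $\{u_{r_k}>0\}$ with $\varphi^2u_{r_k}$, where $\varphi\in C^{\infty}_{\mathrm c}$, $0\le\varphi\le1$, $\varphi\equiv1$ on $B_R$: this is admissible because $u_{r_k}$ is continuous and vanishes on $\partial\{u_{r_k}>0\}$ (approximate by $\varphi^2(u_{r_k}-\varepsilon)^+$, whose support is a compact subset of $\{u_{r_k}>0\}$, then mollify), and it yields
\[
  \int\varphi^2|\nabla u_{r_k}|^2\,dx=\int f_{r_k}\varphi^2u_{r_k}\,dx-2\int\varphi\,u_{r_k}\,\nabla\varphi\cdot\nabla u_{r_k}\,dx .
\]
The right-hand side converges to $-2\int\varphi u_0\,\nabla\varphi\cdot\nabla u_0\,dx$ (from $f_{r_k}\to0$ and $u_{r_k}\to u_0$ uniformly against $\nabla u_{r_k}\rightharpoonup\nabla u_0$), while the same identity for $u_0$, harmonic in $\{u_0>0\}$ and vanishing outside it, reads $\int\varphi^2|\nabla u_0|^2\,dx=-2\int\varphi u_0\,\nabla\varphi\cdot\nabla u_0\,dx$; hence $\int\varphi^2|\nabla u_{r_k}|^2\to\int\varphi^2|\nabla u_0|^2$, and expanding $\int\varphi^2|\nabla(u_{r_k}-u_0)|^2$ with the weak convergence gives $\nabla u_{r_k}\to\nabla u_0$ in $L^2(B_R)$, i.e. \autoref{item:5}.

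For \autoref{item:6}: on a compact $K\subset\{u_0>0\}$ one has $\min_Ku_0>0$, so $u_{r_k}>0$ on $K$ for $k$ large and $\chi_{\{u_{r_k}>0\}}\to1=\chi_{\{u_0>0\}}$ there. On a ball $B_\rho(y)$ where $u_0\equiv0$, the classical interior lower bound $u_{r_k}\gtrsim\dist(\,\cdot\,,\partial\{u_{r_k}>0\})$ on $\{u_{r_k}>0\}$ (see \cite{MR618549}) shows every point of $\{u_{r_k}>0\}\cap B_\rho(y)$ lies within $C\|u_{r_k}\|_{L^{\infty}(B_\rho(y))}\to0$ of $\partial\{u_{r_k}>0\}$; since $\cH^{n-1}(\partial\{u_{r_k}>0\}\cap B_\rho(y))$ is bounded uniformly in $k$ (upper Ahlfors bound from \autoref{item:2} and the Lipschitz bound, rescaled as in \cite{MR618549}), this tubular neighborhood has vanishing measure, so $|\{u_{r_k}>0\}\cap B_\rho(y)|\to0$. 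Covering $\mathrm{int}\{u_0=0\}$ by such balls and using $|\partial\{u_0>0\}|=0$ (density estimates of \cite{MR618549} applied to $u_0$) gives $\chi_{\{u_{r_k}>0\}}\to\chi_{\{u_0>0\}}$ a.e., hence in $L^1_{\mathrm{loc}}$. From here \autoref{item:7} is standard: a limit of free boundary points of $u_{r_k}$ cannot lie in $\{u_0>0\}$ (values would converge to a positive number while being $0$) nor in $\mathrm{int}\{u_0=0\}$ (contradicting non-degeneracy nearby), so $\partial\{u_{r_k}>0\}$ is asymptotically contained in any neighborhood of $\partial\{u_0>0\}$; conversely, if $B_\sigma(\xi)\subset\{u_{r_k}>0\}$ for $\xi\in\partial\{u_0>0\}$ along a subsequence, Harnack for $u_{r_k}$ in $B_\sigma(\xi)$ together with the uniform convergence force $u_0$ to vanish on $B_{\sigma/4}(\xi)$, contradicting $\xi\in\partial\{u_0>0\}$. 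Finally, since $u(x_k)=0$ we always have $0\notin\{u_{r_k}>0\}$, and if moreover $x_k\in\partial\{u>0\}$ then $0\in\partial\{u_{r_k}>0\}$ for every $k$, so \autoref{item:8} follows from \autoref{item:7}, which also gives the last assertion as indicated above.

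The main obstacle is the passage of the \emph{positivity sets} to the limit, \autoref{item:6}--\autoref{item:7}: the $C^{0,\alpha}$ and $H^1$ convergences are soft consequences of the uniform Lipschitz bound, but controlling $\chi_{\{u_{r_k}>0\}}$ and $\partial\{u_{r_k}>0\}$ genuinely uses the non-degeneracy in \autoref{item:2}, both through the interior estimate $u_{r_k}\gtrsim\dist(\,\cdot\,,\partial\{u_{r_k}>0\})$ (to squeeze the positivity set near $\{u_0=0\}$) and through the uniform upper Ahlfors bound on the free boundary. A secondary technical point worth doing carefully is the verification that $\varphi^2u_{r_k}$ is an admissible test function for the equation on the open set $\{u_{r_k}>0\}$, which rests on the continuity of $u_{r_k}$ and its vanishing on the free boundary.
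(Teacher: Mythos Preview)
Your approach is broadly correct and close in spirit to the paper's, but there is one genuine difference and one point that needs repair.

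\textbf{A genuine alternative for \autoref{item:5}.} Your argument for strong \(H^1_{\mathrm{loc}}\) convergence is different from the paper's. You test the equation against \(\varphi^2 u_{r_k}\) to obtain convergence of \(\int\varphi^2|\nabla u_{r_k}|^2\), and then combine norm convergence with weak convergence. The paper instead shows \(\nabla u_{r_k}\to\nabla u_0\) pointwise a.e.\ and applies dominated convergence: on \(\{u_0>0\}\) this comes from interior Schauder estimates giving \(C^{2,\alpha}_{\mathrm{loc}}\) convergence, while on \(\{u_0=0\}\) the paper shows that at each Lebesgue density-one point \(x\) one has \(u_0(x+y)=o(|y|)\), whence \(u_{r_k}<\varepsilon\rho\) on \(B_\rho(x)\) and non-degeneracy forces \(u_{r_k}\equiv 0\) on \(B_{\rho/2}(x)\). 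Your Caccioppoli route is arguably cleaner, since it bypasses this delicate ``\(u_{r_k}\) eventually vanishes near \(\{u_0=0\}\)'' step; both arguments are valid.

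\textbf{A gap in the argument for \autoref{item:6}.} Your proof of \(\chi_{\{u_{r_k}>0\}}\to\chi_{\{u_0>0\}}\) in \(L^1_{\mathrm{loc}}\) rests on the claim that \(u_{r_k}(z)\gtrsim\dist(z,\partial\{u_{r_k}>0\})\) for \(z\in\{u_{r_k}>0\}\). This \emph{pointwise} interior lower bound is not what \cite{MR618549} proves for functions satisfying only \autoref{item:1}--\autoref{item:2}: condition \autoref{item:2} controls \(\sup_{B_r(w)}u\) (or the spherical average) at free boundary points \(w\), not \(u\) at an interior point, and a Harnack chain from the nearest boundary point back to \(z\) degenerates because the chain must pass arbitrarily close to the zero set. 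As written, your tubular-neighborhood estimate does not rule out the scenario that \(u_{r_k}>0\) on a fixed ball in \(\mathrm{int}\{u_0=0\}\) with uniformly small values. The correct ingredient (which is what the paper implicitly imports by citing \cite[4.7]{MR618549}) is the \emph{two-sided density estimate} of Alt--Caffarelli: for every \(w\in\partial\{u_{r_k}>0\}\) and small \(r\),
\[
  0<c\leq \frac{|\{u_{r_k}>0\}\cap B_r(w)|}{|B_r|}\leq 1-c<1,
\]
uniformly in \(k\). Once this is in hand, \(\chi_{\{u_{r_k}>0\}}\) is precompact in \(L^1_{\mathrm{loc}}\) (uniform BV bound) and any limit equals \(1\) on \(\{u_0>0\}\) and, via the density estimate, equals \(0\) a.e.\ on \(\{u_0=0\}\). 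Your argument for \autoref{item:7} is fine (the Harnack step you describe is correct; you should also note the trivial case \(B_\sigma(\xi)\subset\{u_{r_k}=0\}\)), and \autoref{item:8} follows as you say.
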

\begin{proof}
  The proof of 4.7 in \cite{MR618549} can be applied to our case with small
  modifications to account for the fact that \(-\Delta u = f\). Thus, we obtain the
  existence of a Lipschitz continuous blow-up limit \(u_0\) satisfying
  \autoref{item:4}, \autoref{item:6}, \autoref{item:7} and \autoref{item:8}.
  Moreover, we also have that \(\nabla u_{r_k} \to \nabla u_0\) weakly star in
  \(L^{\infty}_{\mathrm{loc}}(\R^n)\).

  Observe that \(u_{r_k}\) is a solution of \(-\Delta u_{r_k} = f_{r_k}\) in
  \(\{u_{r_k} > 0\}\) with \(f_{r_k}(x) = r_kf(x_k + r_kx)\) and, since \(f\) is
  smooth, \(f_{r_k} \to 0\) in \(C^{\alpha}_{\mathrm{loc}}(\R^n)\). Consider any ball
  \(B_r(x) \subset \{u_0 > 0\}\). Then, for \(k\) large enough
  \(B_r(x) \subset \{u_{r_k} > 0\}\) and
  \[
    \|u_{r_k} - u_{r_l}\|_{C^{2,\alpha}(B_{r/2}(x))} \leq
    C(\|u_{r_k} - u_{r_l}\|_{L^{\infty}(B_r(x))} + \|f_{r_k} - f_{r_l}\|_{C^{\alpha}(B_r(x))}).
  \]
  It follows that \(u_{r_k}\) is a Cauchy sequence in \(C^{2,\alpha}(K)\) for any compact
  subset \(K \subset \{u_0 > 0\}\), and therefore \(u_{r_k} \to u_0\) in
  \(C^{2,\alpha}_{\mathrm{loc}}(\{u_0 > 0\})\). In particular,
  \[
    \Delta u_0 = \lim_{r_k \to 0} \Delta u_{r_k} = -\lim_{r_k \to 0} f_{r_k} = 0
  \]
  which proves \autoref{item:9}.

  Let us prove \autoref{item:5}. First we claim that
  \(\nabla u_{r_k} \to \nabla u_0\) pointwise a.e. in \(\R^n\). To see this, observe that since
  \(u_{r_k} \to u_0\) in \(C^{2,\alpha}_{\mathrm{loc}}(\{u_0 > 0\})\), in particular
  \[
    \nabla u_{r_k} \to \nabla u_0 \quad\text{uniformly in compact subsets of } \{u_0 > 0\}.
  \]
  Thus, we only need to show that
  \begin{equation}
    \label{eq:1}
    \nabla u_{r_k} \to \nabla u_0 \quad\text{a.e. in } \{u_0 = 0\}.
  \end{equation}
  In the contact set \(\{u_0 = 0\}\) a.a. points have density 1 by the Lebesgue
  differentiation theorem. Let \(S\) be the set of such points. If \(x \in S\), then
  \begin{equation}
    \label{eq:2}
    u_0(x + y) = o(|y|).
  \end{equation}
  Indeed, otherwise we could find a sequence \(y_k \in B_{\rho_k}(x)\) with
  \(\rho_k \to 0\) such that \(u_0(y_k) > \gamma \rho_k\) for some \(\gamma > 0\). Then by the
  Lipschitz continuity of \(u_0\) we would have
  \[
    u_0 > \frac{\gamma}{2}\rho_k \quad\text{in } B_{c\gamma \rho_k}(y_k)
  \]
  for some small constant \(c > 0\) depending only on the Lipschitz constant
  of \(u_0\). This would imply that \(\{u_0 > 0\}\) has positive density at
  \(x\), contradicting the fact that \(x \in S\). From \eqref{eq:2} we have that
  for any \(\varepsilon > 0\),
  \[
    u_0 < \frac{\varepsilon}{2}\rho \quad\text{in } B_{\rho}(x) \text{ for small } \rho.
  \]
  Using that \(u_{r_k} \to u_0\) uniformly in \(B_{\rho}(x)\) we then see that
  \[
    u_{r_k} < u_0 + \frac{\varepsilon}{2}\rho < \varepsilon\rho
  \]
  in \(B_{\rho}(x)\) if \(k\) is large enough, say \(k \geq k_0(\varepsilon,\rho)\). Thus,
  \[
    \frac{u_{r_k}}{\rho} < \varepsilon \quad\text{in } B_{\rho}(x)
  \]
  and by non-degeneracy it then follows that \(u_{r_k} = 0\) in \(B_{\rho/2}(x)\).
  Consequently, \(u_0 = 0\) in \(B_{\rho/2}(x)\) and therefore \(S\) is an open set.
  Moreover, this argument also shows that \(u_{r_k} = u_0 = 0\) in any ball
  \(B_{\delta}(x) \subset S\) with \(\delta\) small enough and \(k\) sufficiently large. Hence, the
  same conclusion also holds for any compact subset of \(S\) and this completes the
  proof of \eqref{eq:1}.

  Now, since \(\nabla u_{r_k} \to \nabla u_0\) weakly star in
  \(L^{\infty}_{\mathrm{loc}}(\R^n)\), the sequence \(\nabla u_{r_k}\) is bounded in
  \(L^{\infty}_{\mathrm{loc}}(\R^n)\). Hence, by the a.e. pointwise convergence showed
  above and the dominated convergence theorem we have
  \(\nabla u_{r_k} \to \nabla u_0\) in \(L^2_{\mathrm{loc}}(\R^n)\). Also, the local uniform
  convergence of \(u_{r_k}\) implies that \(u_{r_k} \to u_0\) in
  \(L^2_{\mathrm{loc}}(\R^n)\). Thus, \(u_{r_k} \to u_0\) in
  \(H^1_{\mathrm{loc}}(\R^n)\).

  Lastly, the fact that \(u_0\) satisfies condition \autoref{item:2} of Definition
  \ref{defn:1} is an immediate consequence of the local uniform convergence
  \(u_{r_k} \to u_0\) and can be proven in the same way as 4.7 in \cite{MR618549}.
\end{proof}
\begin{remark}
  \label{rem:5}
  Using Lemma \ref{lem:1} and the same arguments of \cite[Lemma 5.3]{MR618549} we
  obtain that for any weak solution \(u\),
  \begin{enumerate}
  \item \(\cH^{n-1}(\partial\{u > 0\} \setminus \partial_{\mathrm{red}}\{u > 0\}) = 0\),
  \item \(|\{u = 0\} \cap D| > 0\) for every open set \(D \subset B_1\) containing a point of
    \(\{u = 0\}\).
  \end{enumerate}
\end{remark}

\section{Improvement of flatness for weak solutions}
\label{sec:impr-flatn-weak}

The aim of this section is to obtain a regularity result for flat free boundaries in
the spirit of \cite[Theorem 8.1]{MR618549}, but for weak solutions of our
inhomogeneous one-phase problem instead. Such a result was proven in \cite{MR3667700}
in a more general setting but using a more restrictive notion of weak solution. Thus,
our goal will be to show that any weak solution \(u:B_1 \to \R\) in the sense of
Definition \ref{defn:1} is also a weak solution in the sense of \cite[Definition
2.2]{MR3667700}.
\begin{proposition}
  \label{prop:2}
  If \(u\) is a weak solution in \(B_1\), then for \(\cH^{n-1}\)-a.e.
  \(x_0 \in \partial_{\mathrm{red}}\{u > 0\} \cap B_1\) we have the following asymptotic
  development for \(u\) and \(x \to 0\):
  \[
    u(x_0 + x) = Q(x_0)(x\cdot\nu_u(x_0))_+ + o(x),
  \]
  where \(\nu_u(x_0)\) is the interior normal vector of \(\{u > 0\}\) at \(x_0\).
\end{proposition}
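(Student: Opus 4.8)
The plan is to work at a point $x_0 \in \partial_{\mathrm{red}}\{u>0\}\cap B_1$ which is a Lebesgue point of the various quantities involved, and show that the blow-up limit at $x_0$ is exactly the half-plane solution $Q(x_0)(x\cdot\nu_u(x_0))_+$. First I would normalise by translating so that $x_0=0$ and setting $\nu = \nu_u(x_0)$. Consider the blow-up sequence $u_{r_k}(x) = \frac{1}{r_k}u(r_k x)$ for any $r_k\to 0$. By Lemma \ref{lem:1}, along a subsequence $u_{r_k}\to u_0$ in $C^{0,\alpha}_{\mathrm{loc}}$ and $H^1_{\mathrm{loc}}$, with $\chi_{\{u_{r_k}>0\}}\to\chi_{\{u_0>0\}}$ in $L^1_{\mathrm{loc}}$, $u_0$ Lipschitz, harmonic where positive, and satisfying the non-degeneracy condition \autoref{item:2}. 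The goal is to identify $u_0$; once we show $u_0(x)=Q(0)(x\cdot\nu)_+$ for \emph{every} subsequential limit, the full asymptotic development $u(x)=Q(0)(x\cdot\nu)_+ + o(|x|)$ follows by a standard compactness/contradiction argument (if the development failed there would be a sequence $r_k\to0$ with $\|u_{r_k} - Q(0)(x\cdot\nu)_+\|_{L^\infty(B_1)}\geq\delta$, contradicting convergence to the unique limit).

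To identify $u_0$, I would pass the distributional identity \autoref{item:3} to the blow-up limit. Since $x_0=0\in\partial_{\mathrm{red}}\{u>0\}$, by definition of the reduced boundary the rescaled sets $\frac{1}{r_k}(\{u>0\}-x_0)$ converge in $L^1_{\mathrm{loc}}$ to the half-space $H=\{x\cdot\nu>0\}$, and the rescaled perimeter measures converge: $\mathcal H^{n-1}\mres\partial_{\mathrm{red}}\{u_{r_k}>0\}\rightharpoonup \mathcal H^{n-1}\mres\{x\cdot\nu=0\}$ weakly-$*$ (this is precisely De Giorgi's blow-up theorem for sets of finite perimeter, combined with Remark \ref{rem:2} which guarantees $\{u>0\}$ has locally finite perimeter). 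Meanwhile $f\chi_{\{u>0\}}$ scales away: the rescaled equation has right-hand side $r_k f(r_k x)\chi_{\{u_{r_k}>0\}}\to 0$. Using the continuity of $Q$, the term $\int Q(x_0 + r_k x)\eta\, d(\mathcal H^{n-1}\mres\partial_{\mathrm{red}}\{u_{r_k}>0\})$ converges to $Q(x_0)\int_{\{x\cdot\nu=0\}}\eta\,d\mathcal H^{n-1}$. Combined with the $H^1_{\mathrm{loc}}$ convergence $\nabla u_{r_k}\to\nabla u_0$, this yields
\[
  -\int_{\R^n}\nabla u_0\cdot\nabla\eta\,dx = Q(x_0)\int_{\{x\cdot\nu=0\}}\eta\,d\mathcal H^{n-1}\qquad\forall\,\eta\in C^\infty_{\mathrm c}(\R^n),
\]
i.e. $\Delta u_0 = Q(x_0)\,\mathcal H^{n-1}\mres\{x\cdot\nu=0\}$ distributionally, and $u_0$ is supported in $\overline H$, nonnegative, Lipschitz, harmonic in $H$.

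Finally I would classify such $u_0$. It is a nonnegative Lipschitz function, harmonic in the half-space $H=\{x\cdot\nu>0\}$, vanishing on and outside $\partial H$, whose distributional Laplacian is $Q(x_0)$ times surface measure on $\partial H$. Write $u_0 = v$ in $H$; $v$ is harmonic, nonnegative, globally Lipschitz, and vanishes on $\partial H$. By the boundary Harnack / Liouville-type classification (a nonnegative harmonic function on a half-space vanishing on the boundary is linear, $v = a\,(x\cdot\nu)$ for some $a\geq0$), and the Lipschitz bound forces this to be exactly $a(x\cdot\nu)_+$ on all of $\R^n$. Computing $\Delta\big(a(x\cdot\nu)_+\big) = a\,\mathcal H^{n-1}\mres\{x\cdot\nu=0\}$ and matching constants gives $a = Q(x_0)$. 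The main obstacle is the correct passage to the limit of the surface-measure term in \autoref{item:3}: one must know that no mass of $\mathcal H^{n-1}\mres\partial_{\mathrm{red}}\{u_{r_k}>0\}$ is lost or concentrates away from the limiting hyperplane. This is where the choice of $x_0$ as a point of the reduced boundary is essential — De Giorgi's structure theorem gives exactly the needed convergence of the blow-up perimeter measures — together with Remark \ref{rem:5}(1), which ensures the topological and reduced boundaries agree up to $\mathcal H^{n-1}$-null sets so that the full boundary does not carry extra mass. A careful argument combining lower semicontinuity of perimeter with the distributional identity (testing against $\eta$ supported near $0$) pins down that the limit measure is precisely $Q(x_0)\mathcal H^{n-1}\mres\{x\cdot\nu=0\}$ with no defect.
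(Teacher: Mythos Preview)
Your approach is correct and is essentially the one indicated by the paper: perform a blow-up at a reduced boundary point via Lemma~\ref{lem:1}, pass the distributional identity \autoref{item:3} to the limit (this is exactly what the paper means by ``instead of the representation theorem of \cite{MR618549} we use the definition of weak solution directly''), and identify the limit as \(Q(x_0)(x\cdot\nu)_+\) by a half-space Liouville argument. Your explicit use of De Giorgi's blow-up theorem to obtain the weak-$*$ convergence of the rescaled perimeter measures to \(\cH^{n-1}\mres\{x\cdot\nu=0\}\) is precisely the mechanism underlying the Alt--Caffarelli argument the paper invokes, so the two proofs coincide.
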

\begin{proof}
  The result can be proven with the same arguments found in the proof of
  \cite[Theorem 4.8, Remark 4.9]{MR618549}. For the convergence of blow-up sequences
  we can use Lemma \ref{lem:1}, and instead of the representation theorem of
  \cite{MR618549} we use the definition of weak solution directly.
\end{proof}
\begin{proposition}
  \label{prop:3}
  Let \(u\) be a weak solution in \(B_1\) and let
  \(x_0 \in \partial\{u > 0\} \cap B_1\). If there is a ball \(B \subset \{u = 0\}\) touching the
  boundary \(\partial\{u > 0\}\) at \(x_0\), then
  \[
    \limsup_{\substack{x \to x_0 \\ u(x) > 0}} \frac{u(x)}{\dist(x, B)} \geq Q(x_0).
  \]
\end{proposition}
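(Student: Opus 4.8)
The plan is to argue by contradiction, exploiting a comparison with a harmonic function in a spherical shell. Suppose the claimed lower bound fails, so that for some $\varepsilon_0 > 0$ we have $u(x) \le (Q(x_0) - \varepsilon_0)\dist(x, B)$ for all $x$ with $u(x) > 0$ in a small neighbourhood of $x_0$. Without loss of generality translate so that $x_0 = 0$, write $B = B_\rho(z_0)$ for the touching ball (so $|z_0| = \rho$ and $B_\rho(z_0) \subset \{u = 0\}$), and let $\nu = -z_0/\rho$ be the interior unit normal to $\partial B$ at the origin. The idea, following the classical Alt--Caffarelli strategy for the analogous statement for minimizers (Lemma 3.2 in \cite{MR618549}), is to build a barrier in an annular region $A = B_{\rho(1+\kappa)}(z_0) \setminus \overline{B_\rho(z_0)}$ that dominates $u$ on the outer sphere but whose normal derivative at the origin is strictly less than $Q(x_0)$; then compare it with $u$ to derive a contradiction via condition \autoref{item:3}.

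The key steps, in order, are as follows. First, fix the annulus $A$ with $\kappa$ small, and consider the radial harmonic function $w$ on $A$ that vanishes on $\partial B_\rho(z_0)$ and equals the supremum of $u$ on the outer sphere $\partial B_{\rho(1+\kappa)}(z_0)$; explicitly $w$ is an affine function of $|x - z_0|^{2-n}$ (or of $\log|x - z_0|$ when $n = 2$). Using the assumed bound $u \le (Q(x_0)-\varepsilon_0)\dist(\cdot, B)$ on the relevant sphere, one computes that for $\kappa$ small the normal derivative $\partial_\nu w(0)$ is at most $Q(x_0) - \varepsilon_0/2$, say. Second, one must correct for the inhomogeneity: since $-\Delta u = f$ in $\{u>0\}$ with $f \ge 0$, $u$ is subharmonic there, so adding to $w$ a small multiple of a fixed supersolution (e.g. $C|x - z_0|^2$ with $C$ comparable to $\|f\|_{L^\infty}$) kills the lower-order term while changing $\partial_\nu w(0)$ by only $O(\rho)$; shrinking $\rho$ (or rather working in $B_\rho(z_0)$ with $\rho$ already small since $B \subset \{u=0\}$ and we are localizing) keeps this error below $\varepsilon_0/4$. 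Third, compare $u$ and the corrected barrier $\tilde w$ on $\overline A$: on $\partial B_\rho(z_0)$ we have $u = 0 \le \tilde w$, on the outer sphere $u \le \tilde w$ by construction, and $\Delta(u - \tilde w) \ge 0$ in $\{u > 0\} \cap A$, so by the maximum principle $u \le \tilde w$ throughout $A \cap \{u > 0\}$ (on $A \cap \{u=0\}$ this is trivial). Fourth, since $u(0) = \tilde w(0) = 0$ and $u \le \tilde w$ with $u \ge 0$, Proposition \ref{prop:2}-type asymptotics — or more directly Proposition \ref{prop:1} (Lipschitz regularity) together with the distributional identity \autoref{item:3} tested against functions concentrating near the origin — forces the surface density of $u$ at $0$, and hence $Q(x_0)$, to be controlled by $\partial_\nu \tilde w(0) \le Q(x_0) - \varepsilon_0/4$, a contradiction.

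The main obstacle I anticipate is the last step: turning the pointwise inequality $u \le \tilde w$ near a single boundary point into a bound on $Q(x_0)$. For minimizers one has clean one-sided density estimates and the blow-up is known to be a half-plane solution; here we only know $u$ is Lipschitz and satisfies the weak formulation \autoref{item:3}, and $x_0$ need not be a reduced-boundary point. The cleanest route is probably to test \autoref{item:3} with $\eta$ supported in a small ball $B_r(0)$: the left side is $-\int \nabla u \cdot \nabla \eta = \int (\Delta u)\eta$ over $\{u>0\}$ plus the boundary term, and letting $r \to 0$ while using $u \le \tilde w$ to bound $u$ from above on the side where $\dist(\cdot, B) \approx x\cdot\nu$, one extracts that the measure $Q\,\cH^{n-1}\mres\partial_{\mathrm{red}}\{u>0\}$ near $0$ cannot have more mass than $\partial_\nu \tilde w(0)$ times the relevant $(n-1)$-area — but making this rigorous when $0 \notin \partial_{\mathrm{red}}\{u>0\}$ requires care, and one may instead need to first observe that the presence of the touching ball $B$ forces $0$ to be a point where blow-ups (via Lemma \ref{lem:1}) are one-dimensional, reducing to the half-space case where the identity $|\nabla u| = Q$ from Remark \ref{rem:3} applies directly to the blow-up limit. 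I would structure the final step around the blow-up: pass to the limit $u_{r_k} \to u_0$, note that $B$ blows up to a half-space contained in $\{u_0 = 0\}$, so $u_0 = \beta (x\cdot\nu)_+$ for some $\beta \ge 0$ by Lemma \ref{lem:1} and harmonicity, and the assumed bound passes to the limit to give $\beta \le Q(x_0) - \varepsilon_0$; but then $u_0$ is a half-plane solution violating the normal-derivative condition one obtains from \autoref{item:3} in the limit, the contradiction.
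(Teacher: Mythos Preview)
Your final blow-up route is the paper's route (it follows \cite[Lemma~4.10]{MR618549}, replacing the representation theorem there by condition~\autoref{item:3}), and the entire barrier construction in your first four steps is unnecessary. But the blow-up sketch as written has a real gap: you claim ``$u_0 = \beta(x\cdot\nu)_+$ by Lemma~\ref{lem:1} and harmonicity,'' yet neither gives this. Lemma~\ref{lem:1} does not say blow-ups are $1$-homogeneous (that is Proposition~\ref{prop:7}, which comes \emph{after} and logically depends on the present proposition through Theorem~\ref{thm:4}), and harmonicity of $u_0$ in $\{u_0>0\}\subset\{x\cdot\nu>0\}$ together with $u_0\le C(x\cdot\nu)_+$ does not force $u_0$ to be a half-plane solution. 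The missing idea is to choose the blow-up so that it \emph{realizes} the limsup: pick $y_k\to x_0$ with $u(y_k)/\dist(y_k,B)\to l$, set $d_k=\dist(y_k,B)$, let $z_k\in\partial B$ be nearest to $y_k$, and rescale $u_k(x)=d_k^{-1}u(z_k+d_kx)$. Then the limit $u_0$ satisfies $u_0\le l(x\cdot\nu)_+$ \emph{and} $u_0(\nu)=l$, so the strong maximum principle applied to $l(x\cdot\nu)-u_0\ge0$ in the component of $\{u_0>0\}$ containing $\nu$ forces $u_0=l(x\cdot\nu)_+$ everywhere.

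Your last sentence (``violating the normal-derivative condition one obtains from \autoref{item:3} in the limit'') is also too vague to stand as written: $u_0$ is not known to be a weak solution (Lemma~\ref{lem:1} does not give \autoref{item:3}, and Proposition~\ref{prop:5} requires a density hypothesis you do not have here). The correct mechanism is to pass \autoref{item:3} for the \emph{sequence} $u_k$ to the limit. By $H^1_{\mathrm{loc}}$ convergence, $\Delta u_k\to\Delta u_0=l\,\cH^{n-1}\mres\{x\cdot\nu=0\}$ in distributions; since $Q(z_k+d_k\,\cdot)\to Q(x_0)$ uniformly and $f_k\to0$, testing against $\eta\ge0$ and invoking lower semicontinuity of the perimeter under the $L^1_{\mathrm{loc}}$ convergence $\chi_{\{u_k>0\}}\to\chi_{\{x\cdot\nu>0\}}$ yields
\[
l\int_{\{x\cdot\nu=0\}}\eta\,d\cH^{n-1}=\lim_k\int_{\partial_{\mathrm{red}}\{u_k>0\}}Q_k\,\eta\,d\cH^{n-1}\ge Q(x_0)\int_{\{x\cdot\nu=0\}}\eta\,d\cH^{n-1},
\]
hence $l\ge Q(x_0)$.
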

\begin{proof}
  The proof can be carried out exactly as \cite[Lemma 4.10]{MR618549} with the only
  modification being that instead of using the representation theorem of
  \cite{MR618549}, we use condition \autoref{item:3} from the definition of weak
  solution. For the convergence of blow-up sequences we use Lemma \ref{lem:1}.
\end{proof}
One of the conditions of \cite[Definition 2.2]{MR3667700} consists on an upper bound
on \(|\nabla u|\) at free boundary points. Precisely, for
\(x_0 \in \partial\{u > 0\} \cap \Omega\) we need to prove that
\[
  \limsup_{\substack{x \to x_0 \\ u(x) > 0}} |\nabla u(x)| \leq Q(x_0).
\]
To show this, we will adapt \cite[Theorem 6.3]{MR618549} to our inhomogeneous
setting, which gives an upper bound of \(|\nabla u|\) near the free boundary in terms of
\(Q\) and some error terms. To prove this result we will need the following two
lemmas.
\begin{lemma}
  \label{lem:2}
  Let \(\delta,\kappa > 0\) be small constants and assume \(u\) is a weak solution such
  that
  \[
    |u(x) - Q(x_0)((x-x_0)\cdot\nu_u(x_0))_+| \leq \delta r
  \]
  for \(x \in B_{r/\kappa}(x_0) \subset B_1\) with
  \(x_0 \in \partial_{\mathrm{red}}\{u > 0\}\). Then, there exist constants \(C_1\) and
  \(C_2\) depending only on \(n\) and \(\|f\|_{\infty}\) such that
  \[
    |\nabla u(x) - Q(x_0)\nu_u(x_0)| \leq C_1Q(x_0)(\delta + \kappa)
  \]
  for \(x \in B_{r/2\kappa}(x_0) \cap \{u = r\}\) and
  \[
    \cH^{n-1}(B_{r/2\kappa}(x_0) \cap \{u = r\}) \leq C_2\left( \frac{r}{\kappa} \right)^{n-1}.
  \]
\end{lemma}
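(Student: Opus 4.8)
The plan is to exploit the flatness hypothesis to confine both the free boundary and the level set $\{u=r\}$ to well-separated thin slabs, and then to convert flatness into a gradient estimate by interior elliptic estimates, from which both conclusions follow. Write $\nu:=\nu_u(x_0)$ and $\ell(x):=Q(x_0)\bigl((x-x_0)\cdot\nu\bigr)_+$, and note that $B_{r/\kappa}(x_0)\subset B_1$ forces $r<\kappa$. First I would record two elementary consequences of $|u-\ell|\le\delta r$ on $B_{r/\kappa}(x_0)$: (i) any $z\in\partial\{u>0\}\cap B_{r/\kappa}(x_0)$ has $u(z)=0$ (by continuity and $u\ge0$), hence $(z-x_0)\cdot\nu\le\delta r/Q_{\mathrm{min}}$; and (ii) any $x\in B_{r/2\kappa}(x_0)$ with $u(x)=r$ has $\ell(x)\in[(1-\delta)r,(1+\delta)r]$, hence $(x-x_0)\cdot\nu\in[(1-\delta)r/Q_{\mathrm{max}},(1+\delta)r/Q_{\mathrm{min}}]$. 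Accordingly I set
\[
  T:=B_{r/2\kappa}(x_0)\cap\bigl\{(1-\delta)r/Q_{\mathrm{max}}\le(x-x_0)\cdot\nu\le(1+\delta)r/Q_{\mathrm{min}}\bigr\},
\]
so that $S:=\{u=r\}\cap B_{r/2\kappa}(x_0)\subset T$ by (ii), and then prove that, for $\delta,\kappa$ small depending only on $Q_{\mathrm{min}},Q_{\mathrm{max}}$, the ball $B_{c_0r}(x)$ with $c_0:=1/(2Q_{\mathrm{max}})$ lies inside $\{u>0\}\cap B_{r/\kappa}(x_0)$, at distance $\ge c_0r$ from $\partial\{u>0\}$, for every $x\in T$. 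Indeed $B_{c_0r}(x)\subset B_{r/\kappa}(x_0)$ once $\kappa$ is small; by (i) every free boundary point in $B_{r/\kappa}(x_0)$ has $\nu$-component $\le\delta r/Q_{\mathrm{min}}$, whereas every point of $B_{c_0r}(x)$ has $\nu$-component $\ge(1/2-\delta)r/Q_{\mathrm{max}}>\delta r/Q_{\mathrm{min}}$ for $\delta$ small, so $B_{c_0r}(x)$ meets no free boundary point; being connected it lies entirely in $\{u>0\}$ or in the interior of $\{u=0\}$, and the latter is impossible since then $u\equiv0$ on $B_{c_0r}(x)$ and flatness would give $\ell(x)\le\delta r$, contradicting $\ell(x)=Q(x_0)(x-x_0)\cdot\nu\ge Q_{\mathrm{min}}(1-\delta)r/Q_{\mathrm{max}}$. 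In particular $T\subset\{u>0\}$.

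For the gradient estimate, fix $x\in T$ and set $w:=u-\ell$. On $B_{c_0r}(x)\subset\{u>0\}$ one has $(y-x_0)\cdot\nu>0$, so $\ell$ is affine there, $-\Delta w=f$, and $\|w\|_{L^\infty(B_{c_0r}(x))}\le\delta r$ by flatness. The standard interior gradient estimate for $-\Delta w=f$ (decompose $w$ on $B_{c_0r}(x)$ into its harmonic replacement plus the Newtonian potential of $f$, using $f\in C^\infty$) gives
\[
  |\nabla u(x)-Q(x_0)\nu|=|\nabla w(x)|\le\frac{C_n}{c_0r}\,\|w\|_{L^\infty(B_{c_0r}(x))}+C_nc_0r\,\|f\|_{L^\infty(B_1)}\le C\bigl(\delta+r\|f\|_\infty\bigr).
\]
Since $r<\kappa$ this is $\le C(\delta+\kappa)$ with $C$ depending only on $n$, $\|f\|_\infty$, $Q_{\mathrm{min}}$, $Q_{\mathrm{max}}$ (which are fixed by the standing assumptions of the paper); taking $C_1:=C/Q_{\mathrm{min}}$ and using $Q(x_0)\ge Q_{\mathrm{min}}$ yields $|\nabla u(x)-Q(x_0)\nu_u(x_0)|\le C_1Q(x_0)(\delta+\kappa)$. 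This holds for every $x\in T$, in particular on $S$, which is the first assertion.

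For the measure bound, the gradient estimate on $T$ gives $\partial_\nu u=\nabla u\cdot\nu\ge Q(x_0)-C(\delta+\kappa)\ge Q_{\mathrm{min}}/2>0$ on $T$ once $\delta,\kappa$ are small. As $T$ is the intersection of a ball with a slab, every line parallel to $\nu$ meets $T$ in a single segment along which $u$ is strictly increasing, so $\{u=r\}$ meets each such line at most once and the orthogonal projection $\pi$ onto $\nu^\perp$ is injective on $S$. Since $\nabla u\ne0$ on $S$, the set $S$ is a $C^1$ hypersurface whose unit normal $\nabla u/|\nabla u|$ differs from $\nu$ by at most $C(\delta+\kappa)$, so $S$ is the graph over $\pi(S)$ of a function $g$ with $|\nabla g|\le1$; and $\pi(S)$ lies in the $(n-1)$-ball of radius $r/2\kappa$ in $\nu^\perp$. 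The area formula then gives
\[
  \cH^{n-1}(S)=\int_{\pi(S)}\sqrt{1+|\nabla g|^2}\,dx'\le\sqrt2\,\omega_{n-1}\Bigl(\tfrac{r}{2\kappa}\Bigr)^{n-1}=:C_2\Bigl(\tfrac{r}{\kappa}\Bigr)^{n-1},
\]
where $\omega_{n-1}$ is the volume of the unit $(n-1)$-ball and $C_2=C_2(n)$.

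The only delicate point is the separation established in the first step: one must confirm that $T$, together with a definite margin around it, lies strictly inside $\{u>0\}$, which requires tracking free boundary points both inside $B_{r/\kappa}(x_0)$ (controlled directly by flatness) and outside it (harmless, since $x\in B_{r/2\kappa}(x_0)$ is then at distance $\gtrsim r/\kappa$ from them for $\kappa$ small), and also ruling out interior zeros of $u$ inside $T$. Everything else is routine: the smallness thresholds on $\delta,\kappa$ depend on $Q_{\mathrm{min}},Q_{\mathrm{max}}$ in accordance with the standing assumptions, and the constant $C_2$ in the area bound is purely dimensional.
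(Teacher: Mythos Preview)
Your proof is correct and follows essentially the same approach as the paper: the paper simply normalizes at the outset to $r=\kappa$, $x_0=0$, $\nu_u(x_0)=e_n$, $Q(x_0)=1$, applies the interior gradient estimate to $v=u-x_n$ on the slab $B_{1/2}\cap\{x_n>\kappa/2\}$, and then writes $B_{1/2}\cap\{u=\kappa\}$ as a graph via the implicit function theorem to bound its area. Your version carries the parameters $Q(x_0)$, $x_0$, $\nu$ explicitly and localizes the gradient estimate on balls $B_{c_0 r}(x)$ instead of a slab, but the structure (flatness $\Rightarrow$ gradient bound on $u-\ell$ $\Rightarrow$ $\partial_\nu u>0$ $\Rightarrow$ graph $\Rightarrow$ area bound) is identical.
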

\begin{proof}
  Without loss of generality we may assume \(r = \kappa\), \(\nu_u(x_0) = e_n\),
  \(Q(x_0) = 1\) and \(x_0 = 0\). By hypothesis, \(u\) is positive in
  \(B_1 \cap \{x_n > \delta \kappa\}\), and thus, the function
  \(v(x) \coloneqq u(x) - x_n\) solves \(-\Delta v = f\) in this region and satisfies
  \(|v| \leq \delta \kappa\). Hence, if \(\delta\) and \(\kappa\) are small enough, the estimate
  \begin{equation}
    \label{eq:grad-estimate}
    |\nabla v| \leq C_n\left( \frac{1}{\kappa}\|v\|_{\infty} + \kappa\|f\|_{\infty} \right) \leq C(\delta + \kappa)
    \quad\text{in}\quad B_{1/2} \cap \left\{ x_n > \frac{\kappa}{2} \right\}
  \end{equation}
  holds. In particular, since \(u(x) = \kappa\) implies
  \(x_n \geq (1-\delta)\kappa > \frac{\kappa}{2}\), we obtain
  \begin{equation}
    \label{eq:nth-deriv-estimate}
    |\partial_nu - 1| \leq C(\delta + \kappa) \quad\text{on}\quad B_{1/2} \cap \{u = \kappa\}
  \end{equation}
  and deduce that \(\partial_nu > 0\) on \(B_{1/2} \cap \{u = \kappa\}\) provided
  \(\delta\) and \(\kappa\) are small enough. It follows by the implicit function theorem that
  \(B_{1/2} \cap \{u = \kappa\}\) is a graph, that is, there exists a function
  \(g:B_{1/2}' \to \R\) such that \(u(x',g(x')) = \kappa\) for \(x' \in B_{1/2}'\), where
  \(x = (x',x_n) \in \R^{n-1} \times \R\) and \(B_{1/2}'\) is the ball of radius
  \(1/2\) in \(\R^{n-1}\). Moreover,
  \[
    \cH^{n-1}(B_{1/2} \cap \{u = \kappa\}) = \int_{B_{1/2}'} \sqrt{1 + |\nabla g|^2} dx'
  \]
  and
  \(\nabla g = -\frac{1}{\partial_nu}(\partial_1u,\ldots,\partial_{n-1}u)\). Now, using
  \eqref{eq:grad-estimate} and \eqref{eq:nth-deriv-estimate} it is easy to see
  that \(|\nabla g|^2 \leq C\) and therefore
  \[
    \cH^{n-1}(B_{1/2} \cap \{u = \kappa\}) \leq C|B_{1/2}'|
  \]
  which concludes the proof.
\end{proof}
\begin{lemma}
  \label{lem:3}
  Let \(u\) be a weak solution in \(B_1\) and let \(D \subset\subset B_1\) be a connected domain
  containing a free boundary point. There exists a constant
  \(C = C(n,D,c_D,C_D,\|f\|_{L^{\infty}(D)}) < \infty\) such that for
  \(\varepsilon > 0\) and \(B_r \subset D\) with \(r \leq 1\)
  \[
    \int_{B_r \cap \{u = \varepsilon\}} |\nabla u|d\cH^{n-1} \leq Cr^{n-1}.
  \]
\end{lemma}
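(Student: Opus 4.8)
The plan is to bound the integral $\int_{B_r \cap \{u = \varepsilon\}} |\nabla u|\, d\cH^{n-1}$ by a covering argument at the scale of the distance to the free boundary, combined with the oscillation/non-degeneracy estimates encoded in condition \autoref{item:2}. First I would reduce to a fixed ball $B_r \subset D$ with $r \le 1$ and fix $\varepsilon > 0$; the set $\{u = \varepsilon\} \cap B_r$ is contained in $\{u > 0\}$, where $u$ solves $-\Delta u = f$, and by Proposition \ref{prop:1} we know $u$ is Lipschitz in $D$ with a constant $L = L(n, D, C_D, \|f\|_{L^\infty(D)})$, so $|\nabla u| \le L$ pointwise a.e. Thus it suffices to control $\cH^{n-1}\big(B_r \cap \{u = \varepsilon\}\big) \le C r^{n-1}$, which is the heart of the matter; the gradient factor then only contributes a constant.

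To estimate the measure of the level set, I would cover $B_r$ by finitely many balls $B_{\rho_i}(x_i)$ with $x_i \in \partial\{u > 0\}$ and $\rho_i$ comparable to $\dist(x_i, \partial\{u>0\})$ — more precisely, a Besicovitch/Vitali-type covering of $B_r \cap \{u = \varepsilon\}$ such that on each $B_{\rho_i}(x_i)$ we can apply Lemma \ref{lem:2}. Indeed, for each point $y$ on the level set with $d(y) \coloneqq \dist(y, \partial\{u>0\})$, non-degeneracy (the lower bound in \autoref{item:2}, via Remark \ref{rem:1}) forces $\sup_{B_{d(y)}(y_0)} u \gtrsim d(y)$ for the closest free boundary point $y_0$, while $u(y) = \varepsilon$ and Lipschitz continuity give $d(y) \gtrsim \varepsilon$; conversely the upper bound in \autoref{item:2} gives $d(y) \lesssim \varepsilon$. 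So on the portion of the level set at a fixed dyadic distance scale $\rho \sim \varepsilon$ from the free boundary, Lemma \ref{lem:2} applies on balls of that radius centred at free boundary points and yields $\cH^{n-1}(B_{\rho}(x_i) \cap \{u = \varepsilon\}) \le C \rho^{n-1}$. Summing over a bounded-overlap cover of $B_r \cap \partial\{u>0\}$ by such balls, and using that the number of balls of radius $\sim\varepsilon$ needed to cover a portion of the free boundary inside $B_r$ is $\lesssim (r/\varepsilon)^{n-1}$ — which itself follows from the Ahlfors-regularity type bound $\cH^{n-1}(\partial\{u>0\} \cap B_r) \le C r^{n-1}$ available from Remark \ref{rem:2} (the set $\{u>0\}$ has locally finite perimeter with density bounds from \cite{MR3667700}, or alternatively from the arguments in \cite[Theorem 4.5]{MR618549}) — one obtains $\cH^{n-1}(B_r \cap \{u=\varepsilon\}) \le C (r/\varepsilon)^{n-1} \cdot \varepsilon^{n-1} = C r^{n-1}$.

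An alternative, perhaps cleaner route that avoids the uniform level-set flatness hypothesis of Lemma \ref{lem:2} is a co-area/integration argument: apply the divergence theorem on $\{0 < u < \varepsilon\} \cap B_r$, or integrate $-\Delta u = f$ there, to relate $\int_{\{u = \varepsilon\} \cap B_r} \partial_\nu u\, d\cH^{n-1}$ to $\int_{\{u=0\}\cap B_r} \partial_\nu u\, d\cH^{n-1}$ (which is controlled by $Q$ and the perimeter of $\{u>0\}$ via condition \autoref{item:3}) plus a bulk term $\int f$. Since $\partial_\nu u = |\nabla u|$ on $\{u = \varepsilon\}$ where $\varepsilon$ is a regular value (Sard's theorem applies for a.e.\ $\varepsilon$, and one passes to general $\varepsilon$ by the Lipschitz bound and a limiting argument), this directly bounds $\int_{\{u=\varepsilon\}\cap B_r}|\nabla u|\,d\cH^{n-1}$ by $C(\mathrm{Per}(\{u>0\}, B_{2r}) + \|f\|_{L^\infty}|B_{2r}|) \le C r^{n-1}$, again using the density/perimeter estimate from Remark \ref{rem:2}. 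I would likely present this second approach as the main argument and mention the covering argument as a remark.

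The main obstacle I expect is making the covering (or the co-area/divergence) argument robust near the free boundary without assuming flatness a priori: one must ensure that the level set $\{u = \varepsilon\}$ does not accumulate mass in regions where the free boundary is highly oscillatory. This is precisely where the finite-perimeter conclusion of \cite{MR3667700} (Remark \ref{rem:2}), together with the two-sided bound in \autoref{item:2} pinning the level set at distance $\sim \varepsilon$ from $\partial\{u>0\}$, does the essential work; the rest is bookkeeping of constants depending on $n$, $D$, $c_D$, $C_D$ and $\|f\|_{L^\infty(D)}$.
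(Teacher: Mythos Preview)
Your second approach is essentially correct and close to the paper's argument, but you take an unnecessary detour. You propose integrating $-\Delta u = f$ over $\{0 < u < \varepsilon\} \cap B_r$, which forces you to deal with the free boundary $\partial\{u>0\}$ as part of the boundary and therefore to invoke condition \autoref{item:3} together with a perimeter bound for $\partial\{u>0\}$ in $B_r$. The paper instead integrates over $\{u > \varepsilon\} \cap B_r$: the boundary of this region is just $(\{u=\varepsilon\}\cap B_r)\cup(\partial B_r\cap\{u>\varepsilon\})$, so the free boundary never appears. One line of divergence theorem gives
\[
  \int_{B_r\cap\{u=\varepsilon\}}|\nabla u|\,d\cH^{n-1}
  = \int_{\partial B_r\cap\{u>\varepsilon\}}\nabla u\cdot\nu\,d\cH^{n-1}
  + \int_{B_r\cap\{u>\varepsilon\}} f\,dx
  \le C r^{n-1} + \|f\|_{L^\infty(D)}|B_r|,
\]
using only the Lipschitz bound on $\nabla u$ from Proposition \ref{prop:1}. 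No perimeter estimate, no condition \autoref{item:3}, no Sard argument is needed.

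Your first approach has a genuine gap: Lemma \ref{lem:2} carries a flatness hypothesis $|u(x)-Q(x_0)((x-x_0)\cdot\nu)_+|\le\delta r$ which is \emph{not} available at a generic free boundary point at this stage of the paper --- indeed, Lemma \ref{lem:3} is one of the ingredients used later (in Step 3 of Proposition \ref{prop:4}) precisely to handle the non-flat part of the free boundary. So you cannot invoke Lemma \ref{lem:2} to cover all of $\{u=\varepsilon\}\cap B_r$; that would be circular. Separately, the reduction ``bound $|\nabla u|$ by $L$ and then bound $\cH^{n-1}(\{u=\varepsilon\}\cap B_r)$'' is weaker than necessary: the paper gets the gradient factor for free from the divergence identity, rather than throwing it away.
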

\begin{proof}
  Since \(\nabla u\) is bounded by Proposition \ref{prop:1}, for almost all \(r\)
  we have
  \begin{align*}
    \int_{B_r \cap \{u = \varepsilon\}} |\nabla u|d\cH^{n-1} &= -\int_{B_r \cap \partial\{u > \varepsilon\}} \nabla u\cdot\nu d\cH^{n-1} \\
                                        &= \int_{\partial B_r \cap \{u > \varepsilon\}} \nabla u\cdot\nu d\cH^{n-1} + \int_{B_r \cap \{u > \varepsilon\}} f dx \\
                                        &\leq c\int_{\partial B_r} d\cH^{n-1} + \|f\|_{L^{\infty}(D)}\int_{B_r} dx.
  \end{align*}
  Thus, using that \(r \leq 1\) we see that
  \[
    \int_{B_r \cap \{u = \varepsilon\}} |\nabla u|d\cH^{n-1} \leq C(r^{n-1} + r^n) \leq Cr^{n-1}.
  \]
\end{proof}
\begin{proposition}
  \label{prop:4}
  If \(u\) is a weak solution in \(B_1\) with \(f \geq 0\), then for every open set
  \(D \subset\subset B_1\) there are constants \(C_1 = C_1(n,D,c_D,C_D)\) and
  \(C_2 = C_2(n,D,\|\nabla f\|_{\infty})\) such that
  \begin{equation}
    \label{eq:grad_bound}
    \sup_{B_r(x_0)} |\nabla u| \leq \sup_{B_R(x_0)} Q + C_1\left(\frac{r}{R}\right)^{\alpha} + C_2R^2
  \end{equation}
  for balls \(B_r(x_0) \subset B_R(x_0) \subset D\), provided \(r\) and \(R\) are
  sufficiently small.
\end{proposition}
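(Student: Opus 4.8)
The plan is to adapt the proof of \cite[Theorem 6.3]{MR618549} to the inhomogeneous setting and to our notion of weak solution; the roles of the classical Alt--Caffarelli lemmas are played here by Proposition \ref{prop:2} and Lemmas \ref{lem:2}--\ref{lem:3}. We prove the estimate for a free boundary point $x_0\in\partial\{u>0\}$, which is the only case needed in the sequel. By Proposition \ref{prop:1} we have $L:=\|\nabla u\|_{L^\infty(D)}<\infty$, and since $-\Delta u=f$ with $f$ smooth, $u\in C^\infty(\{u>0\})$. A direct computation gives in $\{u>0\}$
\[
  \Delta(|\nabla u|^2)=2|D^2u|^2-2\nabla u\cdot\nabla f\ \geq\ -2L\|\nabla f\|_{L^\infty(D)},
\]
so, setting $\Lambda:=\tfrac{2L}{n}\|\nabla f\|_{L^\infty(D)}$ (finite by Proposition \ref{prop:1}), the function $P(x):=|\nabla u(x)|^2+\Lambda|x-x_0|^2$ is subharmonic in $\{u>0\}$. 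By Sard's theorem, for a.e.\ small $\varepsilon>0$ the set $U_\varepsilon:=\{u>\varepsilon\}$ is a smooth open set with $\partial U_\varepsilon\cap B_R(x_0)=\{u=\varepsilon\}\cap B_R(x_0)$ and $P\in C^\infty(\overline{U_\varepsilon}\cap B_R(x_0))$. We shall bound $P$ on $U_\varepsilon\cap B_r(x_0)$ uniformly in such $\varepsilon$ and then let $\varepsilon\to0$.

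The core of the argument is a bound for $|\nabla u|$ on the low level sets $\{u=\varepsilon\}$ away from a small exceptional set. Fix a small parameter $\kappa>0$. By Proposition \ref{prop:2} and Remark \ref{rem:5}, the asymptotic development $u(y+z)=Q(y)(z\cdot\nu_u(y))_++o(|z|)$ holds for $\cH^{n-1}$-a.e.\ $y\in\partial\{u>0\}\cap B_R(x_0)$, hence for a suitable $\tau=\tau(\kappa)>0$ the set
\[
  \Gamma_\kappa:=\bigl\{\,y\in\partial\{u>0\}\cap B_R(x_0)\ :\ |u(y+z)-Q(y)(z\cdot\nu_u(y))_+|\leq\kappa^2|z|\ \text{ for all } |z|\leq\tau\,\bigr\}
\]
satisfies $\cH^{n-1}\bigl((\partial\{u>0\}\cap B_R(x_0))\setminus\Gamma_\kappa\bigr)\leq\kappa$. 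For $y\in\Gamma_\kappa$ and $\varepsilon$ small enough that $\varepsilon/\kappa<\tau$, the hypothesis of Lemma \ref{lem:2} is met at $y$ with $r=\varepsilon$ and $\delta=\kappa$, so Lemma \ref{lem:2} yields $|\nabla u|\leq Q(y)+CQ(y)\kappa\leq\sup_{B_R(x_0)}Q+C\kappa$ on $\{u=\varepsilon\}\cap B_{\varepsilon/2\kappa}(y)$, together with $\cH^{n-1}\bigl(\{u=\varepsilon\}\cap B_{\varepsilon/2\kappa}(y)\bigr)\leq C(\varepsilon/\kappa)^{n-1}$. Covering the part of $\{u=\varepsilon\}$ lying near $\Gamma_\kappa$ by such balls and using Lemma \ref{lem:3} together with non-degeneracy to control the surface measure of the level sets, we conclude that for $\varepsilon$ small
\[
  |\nabla u|\leq\sup_{B_R(x_0)}Q+C\kappa\qquad\text{on } \{u=\varepsilon\}\cap B_R(x_0)\setminus\mathcal B_\varepsilon,
\]
where $\cH^{n-1}(\mathcal B_\varepsilon)\leq C\kappa$; on the remaining set $\mathcal B_\varepsilon$ we keep the crude bound $|\nabla u|\leq L$ and the total control $\int_{\{u=\varepsilon\}\cap B_R(x_0)}|\nabla u|\,d\cH^{n-1}\leq CR^{n-1}$ from Lemma \ref{lem:3}.

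Now set $\mu^2:=\bigl(\sup_{B_R(x_0)}Q+C\kappa\bigr)^2+\Lambda R^2$, so that $P\leq\mu^2$ on the ``good'' boundary portion $\{u=\varepsilon\}\cap B_R(x_0)\setminus\mathcal B_\varepsilon$. Then $w:=(P-\mu^2)_+$ is nonnegative and subharmonic in $U_\varepsilon\cap B_R(x_0)$, vanishes on that good portion, and obeys $w\leq L^2$ both on $\mathcal B_\varepsilon$ and on $\partial B_R(x_0)\cap U_\varepsilon$. The maximum principle then gives, for $x\in B_r(x_0)\cap U_\varepsilon$,
\[
  w(x)\ \leq\ L^2\,\omega^x\bigl(\mathcal B_\varepsilon\bigr)+L^2\,\omega^x\bigl(\partial B_R(x_0)\cap U_\varepsilon\bigr),
\]
$\omega^x$ denoting the harmonic measure of $U_\varepsilon\cap B_R(x_0)$ at $x$. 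As in \cite[Theorem 6.3]{MR618549}, a barrier argument (using that $x$ lies within distance $r$ of the boundary portion $\{u=\varepsilon\}$ while $\partial B_R(x_0)$ is at distance $\gtrsim R-r$) gives $\omega^x(\partial B_R(x_0)\cap U_\varepsilon)\leq C(r/R)^\alpha$; and the $\cH^{n-1}$-smallness of $\mathcal B_\varepsilon$, combined with the surface-measure bound of Lemma \ref{lem:3}, makes $\omega^x(\mathcal B_\varepsilon)$ as small as we wish after first fixing $\kappa$ and then taking $\varepsilon$ small. Letting $\varepsilon\to0$, then $\kappa\to0$, and taking square roots in $|\nabla u(x)|^2\leq\mu^2+CL^2(r/R)^\alpha$ yields \eqref{eq:grad_bound}, the constant $C_1=C_1(n,D,c_D,C_D)$ coming from $L^2$ and the harmonic-measure estimates and $C_2$ absorbing $\Lambda$, hence depending on $\|\nabla f\|_\infty$ and, through Proposition \ref{prop:1}, on $n$ and $D$.

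The main obstacle is the control of the exceptional set $\mathcal B_\varepsilon$: one has to turn its small $\cH^{n-1}$-measure into a small harmonic measure for the domain $U_\varepsilon\cap B_R(x_0)$, and this is exactly where Lemma \ref{lem:3} (surface-measure control of level sets) and Lemma \ref{lem:2} (pointwise gradient control, with an accompanying measure estimate, on the flat part) must be used in tandem; it is also the step that genuinely requires our inhomogeneous analogues of the Alt--Caffarelli lemmas and the reduction to the framework of Definition \ref{defn:1}. By contrast, the inhomogeneity $f$ enters only softly, through the subharmonicity correction $\Lambda|x-x_0|^2$ and hence through the harmless term $C_2R^2$.
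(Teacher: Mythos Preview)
Your strategy is essentially the same as the paper's (both adapt \cite[Theorem~6.3]{MR618549}): pick a subharmonic quantity built from $|\nabla u|$, represent it via harmonic measure/Green's function in $\{u>\varepsilon\}\cap B_R(x_0)$, show the contribution from the inner boundary $\{u=\varepsilon\}$ vanishes as $\varepsilon\to0$, and then use a Poisson-kernel/harmonic-measure decay to iterate. The paper works with $U=(|\nabla u|-Q_D)_+$ rather than $|\nabla u|^2+\Lambda|x-x_0|^2$; this is cosmetic, since $U$ vanishes on the ``good'' part of $\{u=\varepsilon\}$ just as your $w=(P-\mu^2)_+$ does.

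However, the two hardest steps are exactly the ones you black-box, and your sketch does not supply the missing ingredient. First, the claim that small $\cH^{n-1}$-measure of $\mathcal B_\varepsilon$ implies small harmonic measure $\omega^x(\mathcal B_\varepsilon)$ is not a consequence of Lemmas~\ref{lem:2}--\ref{lem:3} alone: those control surface measure, not the density $d\omega^x/d\cH^{n-1}$ on $\{u=\varepsilon\}$. The paper closes this gap by first proving the comparison $G_x^\varepsilon\leq C_x(u-\varepsilon)_+$ (its Step~1, where $f\geq0$ is used), which gives a pointwise bound on the Poisson kernel density in terms of $|\nabla u|$ on $\{u=\varepsilon\}$; only then does Lemma~\ref{lem:3} yield smallness of the boundary integral over the exceptional set. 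Without this comparison your argument has a genuine gap.

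Second, your ``barrier argument'' for $\omega^x(\partial B_R(x_0)\cap U_\varepsilon)\leq C(r/R)^\alpha$ is not merely a distance comparison: what is needed is that the complement $\{u\leq\varepsilon\}$ has uniform positive capacity/density near $x_0$, so that harmonic measure of the outer sphere decays at a fixed rate under dyadic scaling. This is the content of the paper's Step~4, which builds an auxiliary harmonic function $w_\varepsilon$ and uses the lower perimeter bound $\lambda_u(B_{r/2}(x_0))\geq cr^{n-1}$ coming from non-degeneracy (\cite[Lemma~2.4]{MR3667700}) to obtain $\int_{\{u>0\}\cap\partial B_r(x_0)}\partial_\nu G_x^0\,d\cH^{n-1}\leq 2^{-\alpha}$. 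You should either reproduce that argument or cite it explicitly; as written, the decay estimate is asserted rather than proved.
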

\begin{proof}
  The proof of this result is for the most part the same as the proof of
  \cite[Theorem 6.3]{MR618549}, with the exception of some small modifications to
  account for the fact that \(-\Delta u = f\) in \(\{u > 0\}\). Below we mainly highlight
  this modifications. For more details on the rest of the steps we refer the reader
  to \cite{MR618549}.

  For any domain \(D \subset\subset B_1\) with smooth boundary define
  \[
    U \coloneqq (|\nabla u| - Q_D)_+ \quad\text{with}\quad Q_D \geq \sup_D Q.
  \]
  Then \(U\) is bounded and an elementary computation shows that \(U\) is a
  subsolution in \(\{u > 0\}\) of the equation \(-\Delta v = g\) with
  \(g = -\frac{\nabla u}{|\nabla u|}\cdot\nabla f\), that is, \(-\Delta U \leq g\) in
  \(\{u > 0\}\). Since we expect \(U\) to have zero boundary value on
  \(\partial\{u > 0\} \cap D\), first we prove the representation formula
  \[
    U(x) =
    \int_{\{u > 0\} \cap \partial D} U\partial_{\nu}G_x^0 d\cH^{n-1} - \int_{\{u > 0\} \cap D} G_x^0 d\lambda - \int_{\{u > 0\} \cap D} gG_x^0 dy
  \]
  for \(x \in \{u > 0\} \cap D\), where \(\lambda \coloneqq \Delta U + g\) is a Radon measure,
  \(G_x^0\) is the Green function of the domain \(\{u > 0\} \cap D\) and \(\nu\) is the
  inward normal vector to \(D\).
  \begin{enumerate}[label=\emph{Step \arabic*.},wide]
  \item Given \(x \in D\) with \(u(x) > \varepsilon\) and \(\varepsilon > 0\) small, define the Green
    function \(G_x^{\varepsilon}\) as follows:
    \begin{align*}
      &G_x^{\varepsilon} - \Gamma_x \in H^1(D) \quad\text{where } \Gamma_x \text{ is the fundamental solution,} \\
      &G_x^{\varepsilon} = 0 \quad\text{on } \partial D \text{ and } \{u \leq \varepsilon\} \cap D, \qquad \int_D \nabla G_x^{\varepsilon}\cdot\nabla\zeta = \zeta(x)
    \end{align*}
    for every \(\zeta \in H^1(D)\) that is Lipschitz continuous near \(x\) and zero on
    \(\partial D\) and \(\{u \leq \varepsilon\} \cap D\). Proceeding as in \cite{MR618549}, we see that
    \(G_x^{\varepsilon}\) is a positive function in \((\{u > \varepsilon\} \cap D) \setminus \{x\}\) satisfying that
    \[
      G_x^{\varepsilon} \leq G_x^{\varepsilon'} \quad\text{for}\quad 0 < \varepsilon' \leq \varepsilon,
    \]
    and \(G_x^0 \leq G_x\) where \(G_x\) is the Green function with respect to the
    entire domain \(D\). Now, choose \(\varepsilon_x\), \(d_x\) and \(C_x\) such that
    \[
      G_x^0 \leq C_x\frac{u(x)}{2} \text{ on } \partial B_{d_x}(x),
      \quad\text{and}\quad u - \varepsilon_x \geq \frac{u(x)}{2} \text{ in } B_{d_x}(x).
    \]
    Then, for \(\varepsilon \leq \varepsilon_x\) we can define the test function
    \(\zeta\) as \(\zeta = \max(G_x^{\varepsilon} - C_x(u-\varepsilon)_+,0)\) in
    \(D \setminus B_{d_x}(x)\) and \(\zeta = 0\) in \(D \cap B_{d_x}(x)\). Using
    \(\zeta\) as test function for \(G_x^{\varepsilon}\) we find
    \[
      \int_D \nabla G_x^{\varepsilon}\cdot\nabla\zeta = \zeta(x) = 0.
    \]
    Also, since \(\zeta = 0\) in \(\{u \leq \varepsilon\} \cap D\) by construction, testing
    \(u\) against \(\zeta\) we see that
    \[
      \int_D \nabla u\cdot\nabla\zeta = \int_D f\zeta.
    \]
    Combining the two equalities and using that \(f,\zeta \geq 0\) yields
    \[
      \int_{D \setminus B_{d_x}(x)} \nabla(G_x^{\varepsilon} - C_x(u-\varepsilon))\cdot\nabla\zeta = -C_x\int_D f\zeta \leq 0.
    \]
    Hence, if we set
    \(A = \{y \in D \setminus B_{d_x}(x) \mid G_x^{\varepsilon} - C_x(u-\varepsilon)_+ > 0\}\), then
    \(A \subset \{u > \varepsilon\}\) and so
    \[
      \int_A |\nabla(G_x^{\varepsilon} - C_x(u-\varepsilon))|^2 =
      \int_{D \setminus B_{d_x}(x)} \nabla(G_x^{\varepsilon} - C_x(u-\varepsilon))\cdot\nabla\zeta \leq 0.
    \]
    This would imply that \(G_x^{\varepsilon} - C_x(u-\varepsilon)_+ = 0\) in \(A\), contradicting the
    definition of \(A\). Thus, we conclude that \(A\) is empty, that is,
    \[
      G_x^{\varepsilon} \leq C_x(u-\varepsilon)_+ \quad\text{in}\quad D \setminus B_{d_x}(x).
    \]
    From this point the rest of this step can be carried out in the same way as
    \cite{MR618549}.
  \item As in \cite{MR618549}, we get the same representation formula for
    \(\varepsilon > 0\) but with an additional term:
    \[
      U(x) = \int_{\{u > \varepsilon\} \cap \partial D} U\partial_{\nu}G_x^{\varepsilon} d\cH^{n-1}
      - \int_{\{u > \varepsilon\} \cap D} G_x^{\varepsilon} d\lambda + \int_{\{u > \varepsilon\} \cap D} gG_x^{\varepsilon} dy
      + \int_{\{u = \varepsilon\} \cap \overline{D}} q_x^{\varepsilon}U d\cH^{n-1}.
    \]
    The first and second integrands converge to the desired limit as in
    \cite{MR618549}. Also, since
    \(\|g\|_{L^{\infty}(D)} \leq \|\nabla f\|_{L^{\infty}(D)} < \infty\), the third integrand converges to
    \[
      \int_{\{u > 0\} \cap D} gG_x^0 dy
    \]
    as wanted. Thus, we have to show that
    \[
      \int_{\{u = \varepsilon\} \cap \overline{D}} U d\cH^{n-1} \to 0
    \]
    as \(\varepsilon \to 0\).
  \item Let \(\kappa > 0\) be a small constant and for \(\delta > 0\) choose an open set
    \(D_{\delta}\) such that
    \[
      \overline{D} \subset D_{\delta} \subset\subset \Omega \quad\text{and}\quad \cH^{n-1}(\partial\{u > 0\} \cap (D_{\delta} \setminus \overline{D})) \leq \delta.
    \]
    Also, for \(\varepsilon > 0\) let \(K_{\varepsilon}\) be the set of points
    \(x_0 \in \partial_{\mathrm{red}}\{u > 0\} \cap \overline{D}\) such that
    \[
      |u(x_0 + x) - Q(x_0)(x\cdot\nu_u(x_0))_+| \leq \delta\kappa|x| \quad\text{for } x \in B_{\varepsilon/\kappa}.
    \]
    The sets \(K_{\varepsilon}\) are monotone in \(\varepsilon\) and by Proposition \ref{prop:2} they
    form a covering of the reduced free boundary up to a set of \(\cH^{n-1}\) measure
    zero. Hence by Remark \ref{rem:5} we can choose
    \(\varepsilon = \varepsilon(\delta)\) small enough such that
    \[
      \cH^{n-1}(\partial\{u > 0\} \cap (\overline{D} \setminus K_{\varepsilon})) \leq \delta \quad\text{and}\quad
      B_{\varepsilon/\kappa}(\overline{D}) \subset D_{\delta},
    \]
    where
    \(B_{\varepsilon/\kappa}(\overline{D}) = \bigcup_{x \in \overline{D}} B_{\varepsilon/\kappa}(x)\). Now, choose a finite
    covering of \(K_{\varepsilon}\) with balls \(B_{\varepsilon}^j\) of radius \(\varepsilon\) such that
    \[
      \chi_{K_{\varepsilon}} \leq \sum_j \chi_{B_{2\varepsilon}^j} \leq C(n),
    \]
    and let \(x_j \in B_{\varepsilon}^j \cap K_{\varepsilon}\). As in \cite{MR618549}, we can use Lemma
    \ref{lem:2} and \cite[Theorem 2.1]{MR3667700} to obtain the estimate
    \[
      \sum_j\int_{B_{\varepsilon/(2\kappa)}(x_j) \cap \{u = \varepsilon\}} U d\cH^{n-1} \leq
      C(\kappa)\delta\cH^{n-1}(\partial\{u > 0\} \cap D_{\delta})
    \]
    and conclude that the left hand side tends to zero as \(\delta \to 0\). Thus, it remains
    to estimate
    \[
      \int_{L_{\varepsilon}} U d\cH^{n-1}, \quad\text{where }
      L_{\varepsilon} \coloneqq (\overline{D} \cap \{u = \varepsilon\}) \setminus \bigcup_j B_{\varepsilon/(2\kappa)}(x_j).
    \]

    For \(x \in L_{\varepsilon}\) with \(U(x) > 0\) we have
    \(|\nabla u(x)| \geq c > 0\). Then for the maximum ball \(B_r(x)\) in
    \(\{u > 0\}\) we can use the following approximate mean value formula for \(u\)
    \[
      u(x) = \fint_{\partial B_r(x)} u d\cH^{n-1} - \int_{B_r(x)} f(y)G_{B_r(x)}(x,y) dy,
      \quad\text{(\(G_{B_r(x)}\) is the Green function of \(B_r(x)\))}
    \]
    to obtain the estimate
    \begin{align*}
      c \leq |\nabla u(x)| &\leq \frac{C_n}{r}\fint_{\partial B_r(x)} u d\cH^{n-1} + C_nr\|f\|_{L^{\infty}(B_r(x))} \\
                   &= \frac{C_n}{r}\left(u(x) + \int_{B_r(x)} f(y)G_{B_r(x)}(x,y)dy\right) + C_nr\|f\|_{L^{\infty}(B_r(x))}.
    \end{align*}
    Since
    \[
      \int_{B_r(x)} f(y)G_{B_r(x)}(x,y)dy
      \leq \|f\|_{L^{\infty}(B_r(x))}\int_{B_r(x)} G_{B_r(x)}(x,y)dy
      = C_n\|f\|_{L^{\infty}(B_r(x))}r^2,
    \]
    it follows that
    \begin{equation}
      \label{eq:radius-bound}
      \begin{aligned}
        c &\leq \frac{C_n}{r}\left(u(x) + \int_{B_r(x)} f(y)G_{B_r(x)}(x,y)dy\right)
            + C_nr\|f\|_{L^{\infty}(B_r(x))} \\
          &\leq \frac{C_n\varepsilon}{r} + C_n\|f\|_{L^{\infty}(D)}r.
      \end{aligned}
    \end{equation}
    Suppose \(C_n\|f\|_{\infty}r \geq \frac{C_n\varepsilon}{r}\). Then
    \(c \leq 2C_n\|f\|_{\infty}r\) and therefore \(r \geq c_1 > 0\). Set
    \(v \coloneqq -\frac{\nabla u(x)}{|\nabla u(x)|}\) and write
    \[
      u(x + tv) - u(x) = \int_0^t \nabla u(x + sv)\cdot v ds.
    \]
    Since \(-\Delta u = f\) in \(B_{c_1}(x) \subset B_r(x)\), by Schauder estimates
    \begin{align*}
      \|\nabla u\|_{C^{0,\alpha}(B_{c_1/2}(x))} &\leq C_n\left(\frac{1}{c_1}\|u\|_{L^{\infty}(B_{c_1}(x))} + c_1\|f\|_{L^{\infty}(B_{c_1}(x))}\right) \\
                                  &\leq C_n\left(\frac{1}{c_1}\|u\|_{L^{\infty}(D)} + c_1\|f\|_{L^{\infty}(D)}\right).
    \end{align*}
    Thus, the function \(h(s) \coloneqq \nabla u(x+sv)\cdot v\) is H\"older continuous for
    \(0 \leq s < \frac{c_1}{2}\) and its H\"older constant is independent of \(x\).
    Hence, since \(h(0) = -|\nabla u(x)| \leq -c\), there exists some \(s_0\) independent of
    \(x\) such that \(h(s) \leq -\frac{c}{2}\) for \(s \in [0,s_0)\). It follows that for
    \(t < s_0\) we have
    \[
      u(x + tv) - u(x) = \int_0^t \nabla u(x + sv)\cdot v ds \leq -\frac{c}{2}t.
    \]
    Consequently, taking \(\varepsilon\) small enough we can guarantee the existence of some
    \(t\) such that \(2\varepsilon/c < t < s_0 < c_1\) and
    \[
      u(x+tv) \leq \varepsilon - \frac{c}{2}t < 0.
    \]
    Since this contradicts the non-negativity of \(u\), we deduce that
    \(C_n\|f\|_{\infty}r \leq \frac{C_n\varepsilon}{r}\). Inserting this estimate back in
    \eqref{eq:radius-bound} we obtain
    \[
      c \leq \frac{2C_n\varepsilon}{r},
    \]
    which means that, for a small constant \(\tau\) independent of \(\varepsilon\),
    \(\kappa\) and \(x\), the ball \(B_{\varepsilon/\tau}(x)\) intersects the free boundary. From this
    point the remaining part of this step can be finished as in \cite{MR618549} using
    Lemma \ref{lem:3} and \cite[Theorem 2.1]{MR3667700}.
  \item This step is for the most part the same as \cite{MR618549}. Let \(x_0\) be a
    free boundary point and set \(D = B_r(x_0)\). Let \(\nu\) be the inward normal
    vector to \(B_r(x_0)\). We will prove the following estimate for the Poisson
    kernel \(\partial_{\nu}G_x^0\):
    \[
      \int_{\{u > 0\} \cap \partial B_r(x_0)} \partial_{\nu}G_x^0 d\cH^{n-1} \leq 2^{-\alpha}
    \]
    for \(x \in B_{r/2}(x_0)\) and some \(\alpha > 0\), provided \(r\) is small enough. By
    the same computations in \cite{MR618549} we have
    \[
      \int_{\partial B_r(x_0) \cap \{u > \varepsilon\}} \partial_{\nu}G_x^{\varepsilon} d\cH^{n-1}
      \leq 1 - \limsup_{\delta \to 0} \frac{1}{\delta}\int_{\partial B_{r-\delta}(x_0)} v_x^{\varepsilon} d\cH^{n-1}
      \eqqcolon 1 - s_{\varepsilon},
    \]
    and we want to estimate \(s_{\varepsilon}\) from below. Define
    \[
      u_{\varepsilon} \coloneqq (u-\varepsilon)_+, \qquad \Lambda_{\varepsilon} \coloneqq B_{r/2}(x_0) \cap \{u \leq \varepsilon\},
    \]
    and let \(w_{\varepsilon} \in H^1(B_r(x_0))\) be the solution of
    \[
      \begin{cases}
        \Delta w_{\varepsilon} = 0 \quad&\text{in } B_r(x_0) \setminus \Lambda_{\varepsilon}, \\
        w_{\varepsilon} = 1 \quad&\text{on } \partial B_r(x_0), \\
        w_{\varepsilon} = 0 \quad&\text{a.e. in } \Lambda_{\varepsilon}.
      \end{cases}
    \]
    Proceeding as in \cite{MR618549} we find that
    \[
      s_{\varepsilon} \geq cr^{2-n}\int_{\partial B_r(x_0)} \partial_{-\nu}w_{\varepsilon} d\cH^{n-1} \geq cr^{2-n}\lambda_{w_{\varepsilon}}(B_{r/2}(x_0)),
    \]
    where \(\lambda_{w_{\varepsilon}} \coloneqq \Delta w_{\varepsilon} \geq 0\) is a Radon measure. Moreover, we also
    see that \(u_{\varepsilon} \leq Crw_{\varepsilon}\) in \(B_r(x_0)\). Therefore, if we choose a sequence
    \(\varepsilon \to 0\) such that \(\{u = \varepsilon\} \cap B_{r/2}(x_0)\) are smooth surfaces, then
    \(w_{\varepsilon}\) is continuous in \(B_{r/2}(x_0)\) and zero in
    \(\Lambda_{\varepsilon}\). Also,
    \(\Delta(Crw_{\varepsilon} - u_{\varepsilon}) = f\) in
    \(B_{r/2}(x_0) \setminus \Lambda_{\varepsilon}\) and we can apply Lemma 2.1 of \cite{MR3667700} to
    conclude that
    \[
      \lambda_{Crw_{\varepsilon} - u_{\varepsilon}} \coloneqq \Delta(Crw_{\varepsilon} - u_{\varepsilon}) - f\chi_{B_{r/2}(x_0) \setminus \Lambda_{\varepsilon}}
    \]
    is a Radon measure in \(B_{r/2}(x_0)\). Hence,
    \[
      Cr\lambda_{w_{\varepsilon}} - \lambda_{u_{\varepsilon}} = \lambda_{Crw_{\varepsilon} - u_{\varepsilon}} \geq 0
    \]
    and we obtain
    \[
      \limsup_{\varepsilon \to 0} \lambda_{w_{\varepsilon}}(B_{r/2}(x_0))
      \geq \frac{c}{r}\limsup_{\varepsilon \to 0} \lambda_{u_{\varepsilon}}(B_{r/2}(x_0))
      \geq \frac{c}{r}\lambda_u(B_{r/2}(x_0)).
    \]
    Using Remark \ref{rem:1}, \cite[Lemma 2.4]{MR3667700} implies that
    \[
      \lambda_u(B_{r/2}(x_0)) \geq cr^{n-1}
    \]
    for \(r\) small enough. Thus,
    \[
      \limsup_{\varepsilon \to 0} s_{\varepsilon}
      \geq cr^{2-n}\limsup_{\varepsilon \to 0} \lambda_{w_{\varepsilon}}(B_{r/2}(x_0))
      \geq cr^{1-n}\lambda_u(B_{r/2}(x_0))
      \geq c
    \]
    and this finishes the estimate for the Poisson kernel.
  \item As in the previous step, set \(D = B_r(x_0)\) with \(r \leq R\) and
    define
    \[
      \phi(r) \coloneqq \sup_{B_r(x_0)} U(x),
      \qquad Q_D \coloneqq \sup_{B_R(x_0)} Q(x) \geq \sup_{B_r(x_0)} Q(x).
    \]
    Then for \(x \in B_{r/2}(x_0)\) we get, provided \(r\) is small enough,
    \begin{align*}
      U(x) &= \int_{\{u > 0\} \cap \partial B_r(x_0)} U\partial_{\nu}G_x^0 d\cH^{n-1} - \int_{\{u > 0\} \cap B_r(x_0)} G_x^0 d\lambda + \int_{\{u > 0\} \cap B_r(x_0)} gG_x^0 dy \\
           &\leq \left(\sup_{B_r(x_0)} U(x)\right)\int_{\{u > 0\} \cap \partial B_r(x_0)} \partial_{\nu}G_x^0 d\cH^{n-1} + \|g\|_{L^{\infty}(B_r(x_0))}\int_{\{u > 0\} \cap B_r(x_0)} G_x^0 dy \\
           &\leq 2^{-\alpha}\phi(r) + \|\nabla f\|_{L^{\infty}(B_r(x_0))}\int_{\{u > 0\} \cap B_r(x_0)} G_x^0 dy \\
           &\leq 2^{-\alpha}\phi(r) + Cr^2.
    \end{align*}
    Here we have used: in the first step, the representation formula we have
    proved for \(U\); in the second step, the fact that
    \(\int_{\{u > 0\} \cap B_r(x_0)} G_x^0 d\lambda\) is non-negative; in the third step,
    the estimate for the Poisson kernel; and in the last step, the fact
    that
    \[
      \int_{\{u > 0\} \cap B_r(x_0)} G_x^0 dy \leq \int_{B_r(x_0)} G_x dy \leq C_nr^2.
    \]
    Thus, we have obtained
    \[
      \phi\left(\frac{r}{2}\right) \leq 2^{-\alpha}\phi(r) + Cr^2,
    \]
    and an iteration of this estimate yields
    \[
      \phi\left(\frac{r}{2^k}\right)
      \leq 2^{-k\alpha}\phi(r) + Cr^2\sum_{j=1}^k \frac{1}{4^{k-j}}
      = 2^{-k\alpha}\phi(r) + C\left(1 - \frac{1}{4^k}\right)r^2
      \leq 2^{-k\alpha}\phi(r) + Cr^2
    \]
    for any \(r \leq R\) small enough. In particular, if \(R\) is also
    sufficiently small we can apply this estimate with \(r = R\) to obtain
    \[
      \phi\left(\frac{R}{2^k}\right)
      \leq 2^{-k\alpha}\phi(R) + CR^2.
    \]
    Now, given \(r \leq \frac{R}{2}\), choose \(k\) such that
    \[
      \frac{R}{2^{k+1}} \leq r \leq \frac{R}{2^k}.
    \]
    Then, since \(\phi(r)\) is monotone in \(r\),
    \[
      \phi(r) \leq \phi\left(\frac{R}{2^k}\right)
      \leq 2^{-k\alpha}\phi(R) + CR^2
      \leq \left(\frac{2r}{R}\right)^{\alpha}\phi(R) + CR^2.
    \]
    Therefore,
    \[
      \sup_{B_r(x_0)} |\nabla u| - \sup_{B_R(x_0)} Q
      \leq \phi(r)
      \leq \phi(R)\left(\frac{2r}{R}\right)^{\alpha} + CR^2
    \]
    and the proof is finished.\qedhere
  \end{enumerate}
\end{proof}
\begin{corollary}
  \label{cor:1}
  Let \(u\) be a weak solution. Then
  \[
    \limsup_{\substack{x \to x_0 \\ u(x) > 0}} |\nabla u(x)| \leq Q(x_0)
  \]
  for every \(x_0 \in \partial\{u > 0\} \cap \Omega\).
\end{corollary}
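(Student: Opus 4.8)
The plan is to deduce the pointwise $\limsup$ bound directly from the "averaged" gradient estimate \eqref{eq:grad_bound} of Proposition \ref{prop:4} by a simple iteration/limit argument. Fix a free boundary point $x_0 \in \partial\{u>0\}\cap\Omega$ and choose a bounded open set $D$ with $x_0 \in D \subset\subset \Omega$; all constants below may depend on $D$. Let $x \to x_0$ with $u(x)>0$ and, for each such $x$, set $r = r(x) := |x - x_0|$ (or, more conveniently, a fixed small multiple of it so that $B_r(x_0)$ still contains $x$ comfortably, e.g. work with $B_{2r}(x_0)$). For a small fixed $R>0$ with $B_R(x_0)\subset D$ and $r \le R$, Proposition \ref{prop:4} gives
\[
  |\nabla u(x)| \le \sup_{B_{2r}(x_0)}|\nabla u| \le \sup_{B_R(x_0)} Q + C_1\Bigl(\frac{2r}{R}\Bigr)^{\alpha} + C_2 R^2 .
\]

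Now take the $\limsup$ as $x \to x_0$: since $r(x) \to 0$, the middle term vanishes, and we are left with
\[
  \limsup_{\substack{x\to x_0\\ u(x)>0}} |\nabla u(x)| \le \sup_{B_R(x_0)} Q + C_2 R^2
\]
for every sufficiently small $R$. Letting $R \to 0$ and using the continuity of $Q$ (so that $\sup_{B_R(x_0)} Q \to Q(x_0)$ and $C_2 R^2 \to 0$) yields $\limsup_{x\to x_0,\,u(x)>0}|\nabla u(x)| \le Q(x_0)$, which is exactly the claim. One should also note the trivial point that if $x_0$ is an interior point of $\{u=0\}$ the statement is vacuous (the $\limsup$ is over an empty set or is $0$), so the content is genuinely at points accessible from $\{u>0\}$, where the above applies.

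There is essentially no obstacle here: the corollary is a soft consequence of Proposition \ref{prop:4}, and the only mild care needed is bookkeeping with the radii — making sure that for $x$ close to $x_0$ the point $x$ lies in the ball $B_r(x_0)$ over which we take the supremum, and that both $r$ and $R$ are small enough for \eqref{eq:grad_bound} to be applicable. The genuinely hard work (the representation formula for $U = (|\nabla u| - Q_D)_+$, the Poisson kernel estimate, and the compatibility with \cite{MR3667700}) has already been carried out in the proof of Proposition \ref{prop:4}, so this step is just extracting the pointwise statement from the scale-invariant estimate.
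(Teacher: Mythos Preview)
Your proposal is correct and follows exactly the same approach as the paper's own proof: take $r \to 0$ and then $R \to 0$ in \eqref{eq:grad_bound}. The paper states this in one sentence, while you have (correctly) spelled out the bookkeeping with the radii, but there is no substantive difference.
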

\begin{proof}
  The result follows immediately from Proposition \ref{prop:4} by taking the limits
  \(r \to 0\) and \(R \to 0\) in \eqref{eq:grad_bound}.
\end{proof}

As a consequence of Remark \ref{rem:1}, Propositions \ref{prop:2} and \ref{prop:3},
and Corollary \ref{cor:1} we obtain that weak solutions as defined in Definition
\ref{defn:1} are also weak solutions in the sense of \cite{MR3667700}. In particular,
we get a regularity result for flat free boundaries directly from \cite[Theorem
4.3]{MR3667700}.
\begin{definition}
  Let \(0 < \sigma_-,\sigma_+ < 1\), \(\tau > 0\) and \(\nu\) be a unit vector. We say
  \(u\) is of class
  \[
    F(\sigma_+,\sigma_-;\tau) \text{ in } B_{\rho}(x_0) \text{ in the direction } \nu,
  \]
  if
  \begin{enumerate}[label=(\roman*)]
  \item \(u\) is a weak solution in \(B_{\rho}(x_0)\).
  \item \(x_0 \in \partial\{u > 0\} \cap \Omega\) and
    \begin{alignat*}{2}
      u(x) &\geq Q(x_0)((x-x_0)\cdot\nu - \sigma_+\rho) \quad&&\text{for}\quad (x-x_0)\cdot\nu \ge \sigma_+\rho, \\
      u(x) &= 0 \quad&&\text{for}\quad (x-x_0)\cdot\nu \le -\sigma_-\rho.
    \end{alignat*}
  \item \(|\nabla u| \le Q(x_0)(1+\tau)\) in \(B_{\rho}(x_0)\) and
    \(\osc_{B_{\rho}(x_0)} Q \leq Q(x_0)\tau\).
  \end{enumerate}
\end{definition}
\begin{theorem}
  \label{thm:4}
  There are constants \(\alpha > 0\), \(\sigma_0 > 0\) and \(C > 0\) such that the
  following holds: if
  \[
    u \in F(1,\sigma;\infty) \text{ in } B_{\rho}(x_0) \text{ in the direction } \nu
  \]
  with \(\sigma \le \sigma_0\) and \(\rho\) small enough, then
  \[
    \partial\{u > 0\} \cap B_{\rho/4}(x_0) \text{ is a } C^{1,\alpha} \text{ surface,}
  \]
  more precisely, a graph in the direction \(\nu\) of a \(C^{1,\alpha}\) function,
  and for \(x,y\) on this surface
  \[
    |\nu(x) - \nu(y)| \le C\sigma\left|\frac{x - y}{\rho}\right|^{\alpha}.
  \]
  The constants \(\alpha\), \(\sigma_0\) and \(C\) depend only on \(n\),
  \(\|f\|_{W^{1,\infty}(\Omega)}\), \(Q_{\mathrm{min}}\), \(Q_{\mathrm{max}}\) and
  \(\|\nabla Q\|_{L^{\infty}(\Omega)}\).
\end{theorem}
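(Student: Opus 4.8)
The plan is to deduce Theorem~\ref{thm:4} as an immediate consequence of \cite[Theorem 4.3]{MR3667700}, whose proof carries over once we know that a weak solution in the sense of Definition~\ref{defn:1} is also a weak solution in the sense of \cite[Definition 2.2]{MR3667700}. So the only real content of the proof is to run through the hypotheses of that definition and check them off against the results already established in this section.

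Concretely, I would recall that \cite[Definition 2.2]{MR3667700} asks for the following: (a) the interior equation together with continuity, local $H^1$ regularity, and the two-sided control of $\tfrac1r\fint_{\partial B_r(x_0)}u\,d\cH^{n-1}$ near $\partial\{u>0\}$; (b) the asymptotic development $u(x_0+x)=Q(x_0)(x\cdot\nu_u(x_0))_+ + o(|x|)$ at $\cH^{n-1}$-a.e.\ point of $\partial_{\mathrm{red}}\{u>0\}$; (c) the touching-ball lower bound $\limsup_{x\to x_0,\,u>0} u(x)/\dist(x,B)\ge Q(x_0)$ whenever a ball $B\subset\{u=0\}$ touches $\partial\{u>0\}$ at $x_0$; and (d) the gradient upper bound $\limsup_{x\to x_0,\,u>0}|\nabla u(x)|\le Q(x_0)$ at every free boundary point. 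Now (a) is condition~\autoref{item:1} of Definition~\ref{defn:1} together with the local $H^1$ remark following it, the local Lipschitz bound of Proposition~\ref{prop:1}, and the non-degeneracy part of Remark~\ref{rem:1}; the fact that $\partial_{\mathrm{red}}\{u>0\}$ is well defined (i.e.\ that $\{u>0\}$ has locally finite perimeter) is Remark~\ref{rem:2}. Condition (b) is Proposition~\ref{prop:2}, condition (c) is Proposition~\ref{prop:3}, and condition (d) is Corollary~\ref{cor:1}. It is here that one must also make sure the inhomogeneity $-\Delta u=f$ with $f\in W^{1,\infty}$ is of the type allowed in \cite{MR3667700}, which it is, the only genuine difference between the two frameworks being the notion of weak solution itself.

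Having identified $u$ with a weak solution in the sense of \cite{MR3667700}, I would then observe that our class $F(\sigma_+,\sigma_-;\tau)$ is the same one used in \cite[Section 7]{MR618549} and in \cite{MR3667700}, so that $u\in F(1,\sigma;\infty)$ in $B_\rho(x_0)$ in the direction $\nu$ in our sense is exactly the flatness hypothesis of \cite[Theorem 4.3]{MR3667700}; the value $\tau=\infty$ simply records that we impose no a priori bound on $|\nabla u|$ beyond the one already provided by Corollary~\ref{cor:1}, which is precisely what that theorem tolerates. Applying \cite[Theorem 4.3]{MR3667700} then yields that $\partial\{u>0\}\cap B_{\rho/4}(x_0)$ is a $C^{1,\alpha}$ graph in the direction $\nu$ with the stated H\"older estimate for the normal, and tracking the constants through their proof gives the dependence only on $n$, $\|f\|_{W^{1,\infty}(\Omega)}$, $Q_{\mathrm{min}}$, $Q_{\mathrm{max}}$ and $\|\nabla Q\|_{L^{\infty}(\Omega)}$.

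The only delicate point is the verification in the second paragraph that the natural, weaker notion of Definition~\ref{defn:1} satisfies the more restrictive notion of \cite[Definition 2.2]{MR3667700}; but this is exactly what Propositions~\ref{prop:2}--\ref{prop:4} and Corollary~\ref{cor:1} were set up to establish, so in the proof of Theorem~\ref{thm:4} itself it remains only to collect those statements and to double-check the bookkeeping of the flatness-class normalisations and of the dependence of the constants when quoting \cite[Theorem 4.3]{MR3667700}.
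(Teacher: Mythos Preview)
Your proposal is correct and matches the paper's approach exactly: the paper does not give a standalone proof of Theorem~\ref{thm:4} but rather states, just before the theorem, that Remark~\ref{rem:1}, Propositions~\ref{prop:2} and~\ref{prop:3}, and Corollary~\ref{cor:1} together show that weak solutions in the sense of Definition~\ref{defn:1} are weak solutions in the sense of \cite[Definition~2.2]{MR3667700}, whence the result follows directly from \cite[Theorem~4.3]{MR3667700}. Your write-up is in fact more detailed than the paper's one-sentence justification.
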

As a consequence of Theorem \ref{thm:4} we obtain the smoothness of the free boundary
in a neighborhood of any regular point.
\begin{definition}
  Let \(u\) be a weak solution in \(B_1\). We say a point
  \(x_0 \in \partial\{u > 0\} \cap B_1\) is a regular point if there exists a blow-up limit of
  \(u\) at \(x_0\) of the form \(u_0(x) = Q(x_0)(x\cdot\nu)_+\) for some unit vector
  \(\nu \in \R^n\).
\end{definition}
\begin{corollary}
  \label{cor:2}
  Let \(u\) be a weak solution in \(B_1\) and let
  \(x_0 \in \partial\{u > 0\} \cap B_1\) be a regular point. Then,
  \(\partial\{u > 0\}\) is a \(C^{\infty}\)-surface in a neighborhood of \(x_0\).
\end{corollary}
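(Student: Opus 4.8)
The plan is to exploit the definition of a regular point, which by hypothesis furnishes a \emph{flat} blow-up of $u$ at $x_0$, in order to show that $u$ belongs to the flatness class $F(1,\sigma;\infty)$ at a small but definite scale around $x_0$; then the improvement of flatness (Theorem \ref{thm:4}) gives $C^{1,\alpha}$ regularity of $\partial\{u>0\}$ near $x_0$, and a standard bootstrap upgrades this to $C^{\infty}$.

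Concretely, since $x_0$ is a regular point there are $r_k\to 0$ and a unit vector $\nu$ such that the rescalings $u_{r_k}(x)=r_k^{-1}u(x_0+r_kx)$ converge, in the senses of Lemma \ref{lem:1}, to $u_0(x)=Q(x_0)(x\cdot\nu)_+$ (and $Q(x_0)>0$). The first step I would carry out is to turn this into a quantitative flatness statement. Fix a small $\varepsilon>0$ and set $V\coloneqq\{x\cdot\nu<-\varepsilon\}\cap B_{1/2}$, an open convex (hence connected) set. By the Hausdorff convergence $\partial\{u_{r_k}>0\}\to\partial\{u_0>0\}=\{x\cdot\nu=0\}$ of \autoref{item:7} in Lemma \ref{lem:1}, for $k$ large $\partial\{u_{r_k}>0\}\cap B_{1/2}$ lies in $\{|x\cdot\nu|<\varepsilon/2\}$, so $V$ is disjoint from it; since $u_{r_k}$ is continuous, $u_{r_k}$ is then either identically $0$ or everywhere positive on $V$. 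The second alternative is excluded by the $L^1_{\mathrm{loc}}$ convergence $\chi_{\{u_{r_k}>0\}}\to\chi_{\{u_0>0\}}$ of \autoref{item:6} in Lemma \ref{lem:1}, which forces $|\{u_{r_k}>0\}\cap V|\to|\{u_0>0\}\cap V|=0<|V|$. Hence $u_{r_k}\equiv 0$ on $V$, and by continuity on $\{x\cdot\nu\le-\varepsilon\}\cap B_{1/2}$. Undoing the rescaling with $\rho_k\coloneqq r_k/2$, this reads $u\equiv 0$ on $B_{\rho_k}(x_0)\cap\{(x-x_0)\cdot\nu\le-2\varepsilon\rho_k\}$. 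Since $u$ is a weak solution on $B_{\rho_k}(x_0)$, $x_0\in\partial\{u>0\}$, and the remaining requirements in the definition of $F(1,2\varepsilon;\infty)$ are vacuous (the lower bound because $\sigma_+=1$, the gradient and oscillation bounds because $\tau=\infty$), we conclude $u\in F(1,2\varepsilon;\infty)$ in $B_{\rho_k}(x_0)$ in the direction $\nu$.

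Choosing $\varepsilon$ so that $2\varepsilon\le\sigma_0$ (with $\sigma_0$ the constant of Theorem \ref{thm:4}) and $k$ large enough that $\rho_k$ is sufficiently small (and $B_{\rho_k}(x_0)\subset B_1$), Theorem \ref{thm:4} applies and shows that $\partial\{u>0\}\cap B_{\rho_k/4}(x_0)$ is the graph, in the direction $\nu$, of a $C^{1,\alpha}$ function; in particular $\partial\{u>0\}$ is a $C^{1,\alpha}$ surface in a neighborhood of $x_0$. Once this is known, Remark \ref{rem:3} applies: $u$ is a classical solution near $x_0$, with $-\Delta u=f$ in $\{u>0\}$, $u\in C^{1,\alpha}$ up to the free boundary, and $|\nabla u|=Q$ on $\partial\{u>0\}$, where $f,Q\in C^{\infty}$. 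The $C^{\infty}$ regularity of $\partial\{u>0\}$ near $x_0$ then follows from the classical higher-regularity theory for the one-phase Bernoulli problem — straighten the $C^{1,\alpha}$ free boundary and run a Schauder bootstrap, gaining one derivative of the boundary at each step — as recalled in the remark following Theorem \ref{thm:1} and in \cite{carducci2025regularity}; since $f$ and $Q$ are smooth this proceeds indefinitely, yielding $\partial\{u>0\}\in C^{\infty}$ in a neighborhood of $x_0$.

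I expect the only genuinely delicate point to be the passage, in the first step, from "$u_{r_k}$ is small on the $\nu$-negative side" to "$u_{r_k}$ vanishes identically there", which is what lets us land in the class $F(1,\sigma;\infty)$ rather than merely in a flatness class with small $\sigma_+$. This is exactly where the non-degeneracy built into condition \autoref{item:2} of Definition \ref{defn:1} is used (it enters through Lemma \ref{lem:1}, specifically via the $L^1$ convergence of the positivity sets): without it one could not rule out a thin positive layer on the negative side, and this is precisely the feature distinguishing weak solutions with a density property from the pathological examples of Lewis and Vogel.
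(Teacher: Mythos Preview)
Your proof is correct and follows essentially the same route as the paper's: pass to a flat blow-up, use the convergence properties of Lemma~\ref{lem:1} to place $u$ in the class $F(1,\sigma;\infty)$ at a small scale, apply Theorem~\ref{thm:4} for $C^{1,\alpha}$, and then bootstrap via \cite{carducci2025regularity} to $C^{\infty}$. The only cosmetic difference is in how you rule out $u_{r_k}>0$ on the $\nu$-negative region: the paper uses the non-degeneracy estimate directly (if $u_{r_k}>0$ on a fixed ball there, some point has $u_{r_k}\ge cr$, contradicting $u_{r_k}\le\varepsilon$), whereas you invoke the $L^1$ convergence of $\chi_{\{u_{r_k}>0\}}$ from \autoref{item:6}; both arguments are valid and, as you correctly note, the latter ultimately rests on non-degeneracy as well.
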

\begin{proof}
  Replacing \(u\) with \(Q(x_0)^{-1}u\) we may assume that \(Q(x_0) = 1\). Let
  \(u_{r_k}\) be a blow-up sequence at \(x_0\) converging (in the sense of
  Lemma \ref{lem:1}) to \(u_0(x) = (x\cdot\nu)_+\) for some unit vector
  \(\nu \in \R^n\). Since \(u_{r_k} \to u_0\) locally uniformly in \(\R^n\) and
  \(\partial\{u_{r_k} > 0\} \to \partial\{u_0 > 0\}\) locally in Hausdorff distance, given any
  small \(\varepsilon > 0\) we have
  \begin{gather}
    \|u_{r_k} - u_0\|_{L^{\infty}(B_1)} \leq \varepsilon, \label{eq:3} \\
    \partial\{u_{r_k} > 0\} \subset \{-\varepsilon \leq x\cdot\nu \leq \varepsilon\} \cap B_1, \label{eq:4}
  \end{gather}
  for \(k\) large enough. On the one hand, it follows from \eqref{eq:3} that
  \[
    (x\cdot\nu)_+ - \varepsilon \leq u_{r_k}(x) \leq (x\cdot\nu)_+ + \varepsilon \quad\text{in } B_1
  \]
  and therefore
  \[
    u_{r_k} > 0 \quad\text{in } \{x\cdot\nu > \varepsilon\} \cap B_1, \qquad
    u_{r_k} \leq \varepsilon \quad\text{in } \{x\cdot\nu < -\varepsilon\} \cap B_1.
  \]
  On the other hand, by \eqref{eq:4} we know that in
  \(\{x\cdot\nu < -\varepsilon\} \cap B_1\) either \(u_{r_k} = 0\) or
  \(u_{r_k} > 0\). Fix any ball
  \(B_r(x) \subset \{x\cdot\nu < -\varepsilon\} \cap B_1\). If \(u_{r_k} > 0\) in
  \(B_r(x)\), then by the non-degeneracy of \(u\) we can find a point
  \(y \in B_r(x)\) such that \(u_{r_k}(y) \geq cr\). Hence,
  \[
    cr \leq u_{r_k}(y) \leq \varepsilon
  \]
  and taking \(\varepsilon\) small enough we reach a contradiction. This proves that
  \[
    u_{r_k} = 0 \quad\text{in } \{x\cdot\nu < -\varepsilon\} \cap B_1.
  \]
  In particular \(u_{r_k} \in F(1,\varepsilon;\infty)\) in \(B_1\) in the direction
  \(\nu\) (in fact \(u_{r_k} \in F(\varepsilon,\varepsilon;\infty)\)) and therefore
  \(u \in F(1,\varepsilon;\infty)\) in \(B_{r_k}(x_0)\). Hence, taking
  \(\varepsilon\) and \(r_k\) small enough we can apply Theorem \ref{thm:4} to conclude that
  \(\partial\{u > 0\} \cap B_{\rho}(x_0)\) is a \(C^{1,\alpha}\)-surface for some
  \(0 < \alpha < 1\) and some small \(\rho\) independent of \(x_0\). Thus,
  \[
    \partial_{\mathrm{red}}\{u > 0\} \cap B_{\rho}(x_0) = \partial\{u > 0\} \cap B_{\rho}(x_0)
  \]
  and \cite[Corollary 1.6]{carducci2025regularity} implies that
  \(\partial\{u > 0\} \cap B_{\rho/2}(x_0)\) is of class \(C^{\infty}\).
\end{proof}

\section{Blow-up limits of weak solutions}
\label{sec:blow-ups-weak}

The blow-up procedure we saw in Lemma \ref{lem:1} applies, in particular, to
weak solutions. However, the blow-up limit \(u_0\) is not guaranteed to be a
weak solution. In fact, without any additional assumptions the limit of a
sequence of weak solutions need not be a weak solution. Indeed, the function
\[
  u_{\varepsilon}(x) \coloneqq \max(|x_n|,\varepsilon) - \varepsilon
\]
is a weak solution for \(f \equiv 0\), \(Q \equiv 1\) for all \(\varepsilon > 0\). However,
\(u_{\varepsilon}\) converges to \(u_0(x) = |x_n|\) in \(H^1_{\mathrm{loc}}(\R^n)\) as
\(\varepsilon \to 0\), and \(u_0\) is not a weak solution as we saw in Remark \ref{rem:4}. The
issue is that the \(n\)-dimensional Lebesgue measure of the contact set
\(\{u_{\varepsilon} = 0\}\) vanishes in the limit.

In the proof of \cite[Theorem 5.1]{MR1620644} it was shown that, for the classical
Alt-Caffarelli problem (\(f \equiv 0\), \(Q \equiv \text{const.}\)), a positive density
condition on the contact set \(\{u = 0\}\) guarantees that the blow-up limit of Lemma
\ref{lem:1} is a weak solution. We now extend this result to the more general setting
\eqref{eq:inhom-alt-caff-free-bound-form}. The proof follows the steps of
\cite{MR1620644} with minor adjustments.
\begin{proposition}
  \label{prop:5}
  Let \(u\) be a weak solution in \(B_1\) and assume \(u\) satisfies the following
  uniform positive density condition: for every open subset \(D \subset\subset B_1\), there
  exists a constant \(c_D > 0\) such that
  \begin{equation}
    \label{eq:densityEst}
    \frac{|\{u = 0\} \cap B_r(x)|}{|B_r|} \geq c_D > 0
  \end{equation}
  for any ball \(B_r(x) \subset D\) with \(x \in \partial\{u > 0\} \cap D\). Then, for any
  \(x_0 \in \partial\{u > 0\} \cap B_1\) and any blow-up sequence at \(x_0\)
  \[
    u_{r_k}(x) = \frac{1}{r_k}u(x_0 + r_kx), \quad r_k \to 0,
  \]
  there exists a Lipschitz continuous blow-up limit \(u_0:\R^n \to \R\) such
  that, up to subsequence,
  \begin{enumerate}
  \item \(u_{r_k} \to u_0\) in the sense of Lemma \ref{lem:1},
  \item \(u_0\) is a weak solution in \(\R^n\) with \(f \equiv 0\) and
    \(Q \equiv Q(x_0)\).
  \end{enumerate}
\end{proposition}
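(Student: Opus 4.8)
The plan is to show that the blow-up limit $u_0$ produced by Lemma \ref{lem:1} — which applies because a weak solution satisfies \autoref{item:1} and \autoref{item:2} of Definition \ref{defn:1} — is itself a weak solution of \eqref{eq:inhom-alt-caff-free-bound-form} in $\R^n$ with $f\equiv0$, $Q\equiv Q(x_0)$. By Lemma \ref{lem:1}, $u_0$ already satisfies \autoref{item:1} (with $f\equiv0$, since $\Delta u_0=0$ in $\{u_0>0\}$) and \autoref{item:2} with the same constants as $u$; in particular Remark \ref{rem:2} applies to $u_0$, so $\{u_0>0\}$ has locally finite perimeter and $\partial_{\mathrm{red}}\{u_0>0\}$ makes sense. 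Thus everything reduces to verifying \autoref{item:3} for $u_0$, namely $\Delta u_0=Q(x_0)\cH^{n-1}\mres\partial_{\mathrm{red}}\{u_0>0\}$. A first observation I would record is that $u_0$ inherits the density estimate \eqref{eq:densityEst}: rescaling \eqref{eq:densityEst} for $u$ shows each $u_{r_k}$ satisfies it on compact sets with a constant independent of $k$, and the Hausdorff convergence $\partial\{u_{r_k}>0\}\to\partial\{u_0>0\}$ together with $\chi_{\{u_{r_k}>0\}}\to\chi_{\{u_0>0\}}$ in $L^1_{\mathrm{loc}}$ passes it to $u_0$.

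The core is to pass the distributional equation to the limit. Since each $u_{r_k}$ is again a weak solution, testing \autoref{item:3} for $u_{r_k}$ against $\eta\in C^\infty_{\mathrm c}(\R^n)$ and undoing the scaling gives
\[
  -\int \nabla u_{r_k}\cdot\nabla\eta\,dx = \int_{\partial_{\mathrm{red}}\{u_{r_k}>0\}} Q_{r_k}\,\eta\,d\cH^{n-1} - r_k\int_{\{u_{r_k}>0\}} f_{r_k}\,\eta\,dx,
\]
where $Q_{r_k}(x)=Q(x_0+r_kx)$ and $f_{r_k}(x)=f(x_0+r_kx)$. The left-hand side converges to $-\int\nabla u_0\cdot\nabla\eta\,dx$ by the $H^1_{\mathrm{loc}}$-convergence in Lemma \ref{lem:1}, and the last term is $O(r_k)$ because $f$ is bounded. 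Using the uniform local Lipschitz bound of Proposition \ref{prop:1} (its constants are uniform in $k$ because $u(x_0)=0$) in the equation, one gets a uniform estimate $\cH^{n-1}\bigl(\partial_{\mathrm{red}}\{u_{r_k}>0\}\cap B_\rho(y)\bigr)\le C\rho^{n-1}$ at free boundary points, so the surface measures $\cH^{n-1}\mres\partial_{\mathrm{red}}\{u_{r_k}>0\}$ have locally bounded mass; along a further subsequence they converge weakly-$*$, and the identity above then forces the limit to be $\mu:=\tfrac1{Q(x_0)}\Delta u_0$ (using $Q_{r_k}\to Q(x_0)$ locally uniformly and $Q\ge Q_{\mathrm{min}}>0$). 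In particular $\Delta u_0=Q(x_0)\mu$ is a non-negative Radon measure supported in $\partial\{u_0>0\}$. Finally, lower semicontinuity of perimeter — obtained by passing $\int_{\partial_{\mathrm{red}}\{u_{r_k}>0\}}\phi\cdot\nu_{u_{r_k}}\,d\cH^{n-1}=\int_{\{u_{r_k}>0\}}\Div\phi\,dx$ to the limit with $\chi_{\{u_{r_k}>0\}}\to\chi_{\{u_0>0\}}$ in $L^1_{\mathrm{loc}}$ — gives $\cH^{n-1}\mres\partial_{\mathrm{red}}\{u_0>0\}\le\mu$.

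It remains to promote this inequality to an equality. First I would transfer the gradient bound of Proposition \ref{prop:4}/Corollary \ref{cor:1} to $u_0$: for $y\in\partial\{u_0>0\}$ choose $y_k\in\partial\{u_{r_k}>0\}$ with $y_k\to y$, apply \eqref{eq:grad_bound} at $y_k$, and let $k\to\infty$ using $\nabla u_{r_k}\to\nabla u_0$ weakly-$*$ in $L^\infty_{\mathrm{loc}}$ and $\sup_{B_R(y_k)}Q_{r_k}\to Q(x_0)$; then $r,R\to0$ yields $\limsup_{x\to y,\,u_0(x)>0}|\nabla u_0(x)|\le Q(x_0)$. Next, since $u_0$ satisfies \autoref{item:1}, \autoref{item:2} and \eqref{eq:densityEst}, the relative isoperimetric inequality (with $|\{u_0>0\}\cap B_\rho(y)|$ and $|\{u_0=0\}\cap B_\rho(y)|$ both $\gtrsim\rho^n$ at free boundary points $y$) gives the matching lower bound $\cH^{n-1}(\partial_{\mathrm{red}}\{u_0>0\}\cap B_\rho(y))\ge c\rho^{n-1}$; in particular $\mu$ is $(n-1)$-Ahlfors regular, hence $\mu\ll\cH^{n-1}$, and — since non-degeneracy and \eqref{eq:densityEst} force every point of $\partial\{u_0>0\}$ to lie in the essential boundary of $\{u_0>0\}$, so that $\cH^{n-1}(\partial\{u_0>0\}\setminus\partial_{\mathrm{red}}\{u_0>0\})=0$ by Federer's theorem — we conclude $\mu(\partial\{u_0>0\}\setminus\partial_{\mathrm{red}}\{u_0>0\})=0$ and may write $\mu=\theta\,\cH^{n-1}\mres\partial_{\mathrm{red}}\{u_0>0\}$ with $\theta\ge1$ $\cH^{n-1}$-a.e. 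To see $\theta\le1$ I would perform a second blow-up of $u_0$ at an $\cH^{n-1}$-a.e. point $y$ of $\partial_{\mathrm{red}}\{u_0>0\}$: by Lemma \ref{lem:1} applied to $u_0$ and because $\chi_{\{u_0>0\}}$ blows up to the half-space $\{z\cdot\nu_{u_0}(y)>0\}$, this limit is a non-negative function, harmonic in that half-space, vanishing on its boundary and with linear growth, hence $a\,(z\cdot\nu_{u_0}(y))_+$ for some $a>0$; comparing Laplacians identifies $\theta(y)=a/Q(x_0)$, while integrating the transferred gradient bound along the ray $y+t\nu_{u_0}(y)$ gives $a=\lim_{t\to0^+}u_0(y+t\nu_{u_0}(y))/t\le Q(x_0)$. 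Therefore $\theta\equiv1$, i.e. $\Delta u_0=Q(x_0)\cH^{n-1}\mres\partial_{\mathrm{red}}\{u_0>0\}$, which is \autoref{item:3}. (Equivalently, once $u_0$ is known to satisfy \autoref{item:1}, \autoref{item:2} and \eqref{eq:densityEst}, one may instead invoke the structure-of-the-free-boundary arguments of \cite{MR618549}, as in Remark \ref{rem:5}.)

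The step I expect to be the real obstacle — and the only place the hypothesis \eqref{eq:densityEst} is genuinely used — is the identification of the limit measure $\mu$ with $\cH^{n-1}\mres\partial_{\mathrm{red}}\{u_0>0\}$: one must both rule out mass escaping into $\partial\{u_0>0\}\setminus\partial_{\mathrm{red}}\{u_0>0\}$ (precisely what fails for $u_\varepsilon=\max(|x_n|,\varepsilon)-\varepsilon\to|x_n|$, where $\mu=\cH^{n-1}\mres\{x_n=0\}$ but $\partial_{\mathrm{red}}\{u_0>0\}=\emptyset$) and pin down the density of $\mu$ along $\partial_{\mathrm{red}}\{u_0>0\}$ to be exactly $Q(x_0)$. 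By contrast the modifications forced by $f\not\equiv0$ — the vanishing of the rescaled right-hand side $r_kf(x_0+r_k\cdot)$ and of the corresponding error term, and the presence of a bounded right-hand side when transferring Proposition \ref{prop:4} — are routine.
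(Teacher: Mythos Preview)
Your argument is essentially correct and constitutes a genuinely different route from the paper's.

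\textbf{What the paper does.} After Step~1 (the density estimate passes to $u_0$, same as you), the paper does \emph{not} try to identify $\Delta u_0$ as a measure. Instead it uses the full improvement-of-flatness machinery of Section~\ref{sec:impr-flatn-weak} (Theorem~\ref{thm:4}): at any $x_1\in\partial_{\mathrm{red}}\{u_0>0\}$ it performs a second blow-up, shows $\{u_{00}=0\}=\{x\cdot\nu\le0\}$, deduces that $u_{r_k}\in F(1,2\varepsilon;\infty)$ in a small ball around $x_1$, and applies Theorem~\ref{thm:4} to get that the surfaces $\partial\{u_{r_k}>0\}$ are $C^{1,\alpha}$ with \emph{uniform} estimates near $x_1$. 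Passing to the limit, $\partial_{\mathrm{red}}\{u_0>0\}$ is $C^{1,\alpha}$ and $\partial_\nu u_0=Q(x_0)$ there in the classical sense; condition~\autoref{item:3} then follows by an honest integration by parts, handling $\partial\{u_0>0\}\setminus\partial_{\mathrm{red}}\{u_0>0\}$ (shown to be $\cH^{n-1}$-null via a separate double-blow-up argument) with a small-ball covering.

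\textbf{What you do, and what each buys.} You stay at the level of measures: pass $\Delta u_{r_k}$ to $\Delta u_0=Q(x_0)\mu$; get $\cH^{n-1}\mres\partial_{\mathrm{red}}\{u_0>0\}\le\mu$ from lower semicontinuity of total variation under weak-$*$ convergence of the vector measures $D\chi_{\{u_{r_k}>0\}}$ (your phrase ``lower semicontinuity of perimeter'' is slightly loose but the inequality is standard); use Ahlfors regularity plus Federer's theorem to write $\mu=\theta\,\cH^{n-1}\mres\partial_{\mathrm{red}}\{u_0>0\}$ with $\theta\ge1$; and then pin down $\theta\le1$ by a second blow-up and the gradient bound transferred from Proposition~\ref{prop:4}. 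This avoids Theorem~\ref{thm:4} entirely, using only Corollary~\ref{cor:1} from Section~\ref{sec:impr-flatn-weak}; on the other hand the paper's route yields more, namely actual $C^{1,\alpha}$ regularity of $\partial_{\mathrm{red}}\{u_0>0\}$, which it reuses later. Two small remarks: the step ``comparing Laplacians identifies $\theta(y)=a/Q(x_0)$'' implicitly needs $y$ to be both a De~Giorgi point and a Lebesgue point of $\theta$ with respect to $\cH^{n-1}\mres\partial_{\mathrm{red}}\{u_0>0\}$ (which is $\cH^{n-1}$-a.e., so harmless); and your closing parenthetical invoking Remark~\ref{rem:5} is circular --- that remark assumes~\autoref{item:3}, which is exactly what you are proving.
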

\begin{proof}
  Given a blow-up sequence \(u_{r_k}\), Lemma \ref{lem:1} implies the existence of a
  blow-up limit \(u_0 \in C^{0,1}(\R^n)\) such that \(u_{r_k} \to u_0\) (in the sense of
  Lemma \ref{lem:1}). Moreover, we also know from Lemma \ref{lem:1} that \(u_0\)
  satisfies conditions \autoref{item:1} and \autoref{item:2} of Definition
  \ref{defn:1} with \(f \equiv 0\). Let us prove that \(u_0\) satisfies condition
  \autoref{item:3}.
  \begin{enumerate}[label=\emph{Step \arabic*.},wide]
  \item First we claim that \eqref{eq:densityEst} also holds for the blow-up limit
    \(u_0\). Let \(D \subset\subset \R^n\) and let \(B_r(x) \subset D\) be a ball with
    \(x \in \partial\{u_0 > 0\}\). Since
    \(\partial\{u_{r_k} > 0\} \to \partial\{u_0 > 0\}\) locally in Hausdorff distance, we can find a
    sequence of points \(x_k \in \partial\{u_{r_k} > 0\}\) such that \(x_k \to x\). Notice that
    for any \(k\)
    \begin{align*}
      |\{u_{r_k} = 0\} \cap B_r(x_k)| &= \int_{B_r(x_k)} \chi_{\{u_{r_k} = 0\}}(y)dy
                                     = \frac{1}{r_k^n}\int_{B_{rr_k}(x_0 + r_kx_k)} \chi_{\{u = 0\}}(y)dy \\
                                   &= \frac{1}{r_k^n}|\{u = 0\} \cap B_{rr_k}(x_0 + r_kx_k)|.
    \end{align*}
    Since \(B_r(x_k) \subset D\) for \(k\) large enough, there exists an open set
    \(D' \subset\subset \Omega\) (depending on \(D\)) such that
    \(B_{rr_k}(x_0 + r_kx_k) \subset x_0 + r_kD \subset D'\). Moreover,
    \(x_k \in \partial\{u_{r_k} > 0\}\) implies that
    \(x_0 + r_kx_k \in \partial\{u > 0\}\). Hence, by the positive density of \(u\) at
    \(x_0 + r_kx_k\) there is a constant \(c > 0\) depending on \(D'\) (and therefore
    on \(D\)) such that
    \[
      \frac{1}{r_k^n}|\{u = 0\} \cap B_{rr_k}(x_0 + r_kx_k)| \geq
      \frac{1}{r_k^n}|B_{rr_k}|c =
      |B_r|c.
    \]
    Thus,
    \[
      \frac{|\{u_{r_k} = 0\} \cap B_r(x_k)|}{|B_r|} \geq c > 0
    \]
    and the convergence \(\chi_{\{u_{r_k} > 0\}} \to \chi_{\{u_0 > 0\}}\) in
    \(L^1_{\mathrm{loc}}(\R^n)\) implies that
    \[
      \frac{|\{u_0 = 0\} \cap B_r(x)|}{|B_r|} \geq c
    \]
    with \(c = c(D)\).
  \item Next we claim that
    \(\cH^{n-1}(\partial\{u_0 > 0\} \setminus \partial_{\mathrm{red}}\{u_0 > 0\}) = 0\). Assume by
    contradiction that the claim does not hold. Then by \cite[4.5.6 (3)]{MR257325},
    \[
      s^{1-n}\cH^{n-1}(\partial_{\mathrm{red}}\{u_0 > 0\} \cap B_{s}(x_1)) \to 0 \quad
      \text{as}\quad s \to 0
    \]
    for \(\cH^{n-1}\)-a.a. points
    \(x_1 \in \partial\{u_0 > 0\} \setminus \partial_{\mathrm{red}}\{u_0 > 0\}\). Let
    \(x_1\) be such a point and consider a second blow-up sequence
    \[
      u_{0s_l}(x) \coloneqq (u_0)_{s_l}(x) = \frac{1}{s_l}u_0(x_1 + s_lx).
    \]
    By Lemma \ref{lem:1}, \(u_{0s_l}\) converges to a blow-up limit \(u_{00}\).
    Moreover, the fact that \(\{u_0 > 0\}\) has locally finite perimeter in \(\R^n\)
    (recall Remark \ref{rem:2}) implies that for \(R \in (0,\infty)\) and
    \(\phi \in C^{\infty}_{\mathrm{c}}(B_R;\R^n)\)
    \begin{align*}
      \int_{B_R} \chi_{\{u_{0s_l} > 0\}}\Div \phi &= s_l^{1-n}\int_{B_{Rs_l}(x_1)} \chi_{\{u_0 > 0\}}\Div\left(\phi\left(\frac{x - x_1}{s_l}\right)\right) \\
                                                  &= s_l^{1-n}\int_{\partial_{\mathrm{red}}\{u_0 > 0\} \cap B_{Rs_l}(x_1)} \phi\left(\frac{x - x_1}{s_l}\right)\cdot\nu d\cH^{n-1} \\
                                                  &\leq \|\phi\|_{L^{\infty}}s_l^{1-n}\cH^{n-1}(\partial_{\mathrm{red}}\{u_0 > 0\} \cap B_{Rs_l}(x_1)).
    \end{align*}
    Taking the limit as \(l \to \infty\) we obtain that
    \[
      \int_{B_R} \chi_{\{u_{00} > 0\}}\Div \phi = 0, \quad \forall\phi \in C^{\infty}_{\mathrm{c}}(B_R;\R^n).
    \]
    Therefore, \(\chi_{\{u_{00} > 0\}}\) is a function of bounded variation which
    is constant a.e. in \(B_R\). Since \(u\) satisfies \eqref{eq:densityEst},
    so do \(u_0\) and \(u_{00}\) by the previous step. Thus, \(u_{00} = 0\) in
    \(B_R\) which contradicts the non-degeneracy of \(u_0\) at \(x_1\).
  \item Our next step is to prove that \(\partial\{u_0 > 0\}\) is a
    \(C^{1,\alpha}\)-surface in a neighborhood of any point of the reduced boundary
    \(\partial_{\mathrm{red}}\{u_0 > 0\}\). Let
    \(x_1 \in \partial_{\mathrm{red}}\{u_0 > 0\}\) and once again let
    \(u_{00}\) be a blow-up limit of \(u_0\) at \(x_1\). Then,
    \[
      \{u_{0s_l} = 0\} \longrightarrow \{x\cdot\nu < 0\} \quad\text{in } L^1_{\mathrm{loc}},
    \]
    where \(\nu = \nu_{u_0}(x_1)\) denotes the inward unit normal vector to
    \(\{u_0 > 0\}\) at \(x_1\). Since
    \[
      \chi_{\{u_{0s_l} > 0\}} \longrightarrow \chi_{\{u_{00} > 0\}} \quad\text{in } L^1_{\mathrm{loc}}(\R^n)
    \]
    as well, we infer that \(\{u_{00} = 0\} = \{x\cdot\nu < 0\}\) almost everywhere. By the
    continuity and the uniform density estimate of \(u_{00}\) it follows that
    \[
      \{u_{00} = 0\} = \{x\cdot\nu \leq 0\} \text{ everywhere}.
    \]
    Thus, \(\partial\{u_{00} > 0\} = \{x\cdot\nu = 0\}\) and the local convergence
    \(\partial\{u_{0s_l} > 0\} \to \partial\{u_{00} > 0\}\) in Hausdorff distance implies that for
    any \(\varepsilon > 0\)
    \[
      \|u_{0s_l} - u_{00}\|_{L^{\infty}(B_1)} \leq \varepsilon, \qquad
      \partial\{u_{0s_l} > 0\} \cap B_1 \subset \{-\varepsilon \leq x\cdot\nu \leq \varepsilon\} \cap B_1,
    \]
    for \(l\) large enough. In particular, in
    \(\{x\cdot\nu < -\varepsilon\} \cap B_1\) either \(u_{0s_l} > 0\) or
    \(u_{0s_l} = 0\). Moreover, \(u_{0s_l} \leq \varepsilon\) in
    \(\{x\cdot\nu < -\varepsilon\} \cap B_1\) since \(u_{00} = 0\). Now fix
    \(B_\rho(x) \subset \{x\cdot\nu < -\varepsilon\} \cap B_1\) and suppose
    \(u_{0s_l}\) is positive in \(B_\rho(x)\). Then, by the non-degeneracy of \(u_0\)
    there exists a point \(y \in B_{\rho}(x)\) such that
    \[
      c\rho \leq u_{0s_l}(y) \leq \varepsilon
    \]
    and taking \(\varepsilon\) small enough we reach a contradiction. Therefore we conclude
    that \(u_{0s_l} = 0\) in \(\{x\cdot\nu < -\varepsilon\} \cap B_1\). In other words,
    \(u_{0s_l} \in F(1,\varepsilon;\infty)\) in \(B_1\) in the direction \(\nu\), or equivalently,
    \(u_0 \in F(1,\varepsilon;\infty)\) in \(B_{s_l}(x_1)\) in the direction \(\nu\). Repeating this
    argument with the first blow-up sequence \(u_{r_k}\) we obtain that
    \(u_{r_k} \in F(1,2\varepsilon;\infty)\) in \(B_{s_l}(x_1)\) in the direction
    \(\nu\). Hence, taking \(\varepsilon\) and \(s_l\) small enough we can apply Theorem
    \ref{thm:4} to deduce that \(\partial\{u_{r_k} > 0\} \cap B_{\rho}(x_1)\) is a
    \(C^{1,\alpha}\)-surface for some small radius \(\rho\). Lastly, since the surfaces
    \(\partial\{u_{r_k} > 0\} \cap B_{\rho}(x_1)\) are uniformly bounded in
    \(C^{1,\alpha}\) (also by Theorem \ref{thm:4}), we conclude that
    \(\partial\{u_0 > 0\}\) must be a \(C^{1,\alpha}\)-surface in \(B_{\rho}(x_1)\).
  \item Let \(\eta \in C^{\infty}_{\mathrm{c}}(\R^n)\). Since
    \(\cH^{n-1}(\partial\{u_0 > 0\} \setminus \partial_{\mathrm{red}}\{u_0 > 0\}) = 0\) and
    \(\partial_{\mathrm{red}}\{u_0 > 0\}\) is open relative to
    \(\partial\{u_0 > 0\}\), we can find a finite covering
    \[
      \supp \eta \cap (\partial\{u_0 > 0\} \setminus \partial_{\mathrm{red}}\{u_0 > 0\}) \subset \bigcup_{i=1}^{N_{\delta}} B_{t_i}(y_i)
    \]
    such that
    \[
      \sum_{i=1}^{N_{\delta}} t_i^{n-1} \leq \delta.
    \]
    Split the left hand side of condition \autoref{item:3} as follows:
    \begin{align*}
      \int \nabla u_0\cdot\nabla\eta
      = \int_{\bigcup_{i=1}^{N_{\delta}} B_{t_i}(y_i)} \nabla u_0\cdot\nabla\eta +
      \int_{\{u > 0\} \setminus \bigcup_{i=1}^{N_{\delta}} B_{t_i}(y_i)} \nabla u_0\cdot\nabla\eta
      \eqqcolon I_1 + I_2.
    \end{align*}
    For the first integral we have
    \[
      |I_1| \leq C_1\sum_{i=1}^{N_{\delta}} t_i^n \leq C_1\delta^{\frac{n}{n-1}}
    \]
    and therefore \(I_1 \to 0\) as \(\delta \to 0\). For the second integral,
    \(\partial_{\mathrm{red}}\{u_0 > 0\}\) being a \(C^{1,\alpha}\)-surface implies that
    \(u_0 \in C^{1,\alpha}(\overline{\{u_0 > 0\}} \cap (\supp \eta \setminus \bigcup_{i=1}^{N_{\delta}}
    B_{t_i}(y_i)))\) by classical Schauder regularity estimates. Also, by
    \cite[Theorem 4.4]{MR3667700} we know \(\partial_{\mathrm{red}}\{u_{r_k} > 0\}\) is a
    \(C^{1,\alpha}\)-surface for every \(k\) as well. In particular, \(u_{r_k}\) is a weak
    solution for \(f_{r_k} = r_kf(x_0 + r_kx)\) and
    \(Q_{r_k}(x) \coloneqq Q(x_0 + r_kx)\) satisfying
    \(\partial_{\nu}u_{r_k} = Q_{r_k}\) on \(\partial_{\mathrm{red}}\{u_{r_k} > 0\}\) in the
    classical sense by Remark \ref{rem:3}. Moreover, the smoothness of \(Q\) implies
    that \(Q_{r_k} \to Q(x_0)\) locally uniformly. Thus, since
    \(\partial_{\mathrm{red}}\{u_{r_k} > 0\}\) are \(C^{1,\alpha}\)-surfaces converging to
    \(\partial_{\mathrm{red}}\{u_0 > 0\}\) in the \(C^{1,\alpha}\) norm, we deduce that
    \(\partial_{\nu}u_0 = Q(x_0)\) on \(\partial_{\mathrm{red}}\{u_0 > 0\}\) in the classical sense.
    Integrating by parts and using that \(\Delta u_0 = 0\) in \(\{u_0 > 0\}\) we obtain
    \begin{align*}
      I_2 &= -\int_{\{u_0 > 0\} \cap \partial(\bigcup_{i=1}^{N_{\delta}} B_{t_i}(y_i))} \eta\partial_{\nu}u_0 d\cH^{n-1} -
            \int_{\partial\{u_0 > 0\} \setminus \bigcup_{i=1}^{N_{\delta}} B_{t_i}(y_i)} \eta\partial_{\nu}u_0 d\cH^{n-1} \\
          &= -\int_{\{u_0 > 0\} \cap \partial(\bigcup_{i=1}^{N_{\delta}} B_{t_i}(y_i))} \eta\partial_{\nu}u_0 d\cH^{n-1} -
            \int_{\partial_{\mathrm{red}}\{u_0 > 0\} \setminus \bigcup_{i=1}^{N_{\delta}} B_{t_i}(y_i)} Q(x_0)\eta d\cH^{n-1}.
    \end{align*}
    Since
    \[
      \left| \int_{\{u_0 > 0\} \cap \partial(\bigcup_{i=1}^{N_{\delta}} B_{t_i}(y_i))} \eta\partial_{\nu}u_0 d\cH^{n-1} \right| \leq
      C_2\sum_{i=1}^{N_{\delta}} t_i^{n-1} \leq C_2\delta,
    \]
    letting \(\delta \to 0\) we see that
    \[
      I_2 = -\int_{\partial_{\mathrm{red}}\{u_0 > 0\}} Q(x_0)\eta d\cH^{n-1},
    \]
    and therefore
    \[
      \int \nabla u_0\cdot\nabla\eta = -\int_{\partial_{\mathrm{red}}\{u_0 > 0\}} Q(x_0)\eta d\cH^{n-1}.
    \]
    Thus, \(u_0\) satisfies condition \autoref{item:3} with \(f \equiv 0\) and
    \(Q \equiv Q(x_0)\) and the proof is complete.\qedhere
  \end{enumerate}
\end{proof}
The uniform density condition stated in Proposition \ref{prop:5} does not hold in
general for all weak solutions. Consider, for instance, the following example: let
\[
  u(x) \coloneqq
  \begin{cases}
    \frac{1}{2}(\frac{1}{2} - |x - \frac{1}{2}e_n|^2) \quad&\text{if } x \in B_{1/2}(\frac{1}{2}e_n), \\
    \frac{1}{2}(\frac{1}{2} - |x + \frac{1}{2}e_n|^2) \quad&\text{if } x \in B_{1/2}(-\frac{1}{2}e_n), \\
    0 \quad&\text{otherwise}.
  \end{cases}
\]
Then \(u\) is a weak solution in \(B_1\) for \(f \equiv n\), \(Q \equiv 1\) (in fact,
\(u\) is a classical solution outside the origin) and
\[
  \partial\{u > 0\} = \{0\} \cup \partial_{\mathrm{red}}\{u > 0\}.
\]
However, the contact set \(\{u = 0\}\) has density zero at the origin. Thus,
despite being a weak solution, \(u\) fails to satisfy \eqref{eq:densityEst}.

This example also shows that even for blow-up sequences, a very specific kind
of sequence, the blow-up limit need not be a weak solution. Indeed, if
\(u_{r_k}\) is a blow-up sequence of \(u\) at \(0\), then the blow-up limit
\(u_0\) is
\[
  u_0(x) = |x\cdot e_n|.
\]
In particular,
\[
  \{u_0 = 0\} = \{x_n = 0\},
\]
and consequently \(|\{u_0 = 0\}| = 0\). Thus, we infer that \(u_0\) cannot be
a weak solution (see Remark \ref{rem:5}).

For our purposes, however, the Lipschitz regularity of the free boundary
guarantees the density estimate required by Proposition \ref{prop:5}. Hence,
we have the following corollary.
\begin{corollary}
  \label{cor:3}
  Let \(u\) be a weak solution in \(\Omega\) such that \(\partial\{u > 0\}\) is Lipschitz.
  For any point \(x_0 \in \partial\{u > 0\} \cap \Omega\) and any blow-up sequence
  \(u_{r_k}\) at \(x_0\), there exists a weak solution
  \(u_0 \in C^{0,1}_{\mathrm{loc}}(\R^n)\) with \(f \equiv 0\) and
  \(Q \equiv Q(x_0)\) such that, up to subsequence, \(u_{r_k} \to u_0\) in the sense
  of Lemma \ref{lem:1}.
\end{corollary}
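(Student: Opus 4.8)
The plan is to deduce the statement directly from Proposition~\ref{prop:5}. Since blow-ups are a purely local object, after replacing $u$ by the rescaled weak solution $v(y)\coloneqq\rho^{-1}u(x_0+\rho y)$ on a ball $B_\rho(x_0)\subset\subset\Omega$ we may assume that $u$ is a weak solution in $B_1$ with Lipschitz free boundary and that $x_0=0$; a blow-up sequence of $u$ at $x_0$ corresponds to one of $v$ at the origin, and the Lipschitz character of $\partial\{u>0\}$ is preserved under this rescaling. Proposition~\ref{prop:5} then yields the Lipschitz blow-up limit $u_0$, the convergence $u_{r_k}\to u_0$ in the sense of Lemma~\ref{lem:1}, and the fact that $u_0$ is a weak solution in $\R^n$ with $f\equiv0$ and $Q\equiv Q(x_0)$ — which is exactly the asserted conclusion — \emph{provided} we verify that the uniform positive density hypothesis \eqref{eq:densityEst} holds. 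Hence the entire content of the proof is to check that a weak solution whose free boundary is Lipschitz satisfies \eqref{eq:densityEst}.

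To do this, fix an open set $D\subset\subset B_1$. The hypothesis that $\partial\{u>0\}$ is Lipschitz means that $\partial\{u>0\}$ is covered by finitely many cylinders in each of which, after a rotation, $\{u>0\}$ is the region lying above the graph of a Lipschitz function. A standard compactness argument over $\overline D$ produces a radius $r_0=r_0(D)>0$ and a constant $L=L(D)\ge1$ such that for every $x\in\partial\{u>0\}\cap D$, after translating $x$ to the origin and choosing a suitable orthonormal frame,
\[
  \{u>0\}\cap B_{r_0}=\{\,y=(y',y_n)\in B_{r_0}:\ y_n>\varphi(y')\,\}
\]
for some $L$-Lipschitz $\varphi$ with $\varphi(0)=0$. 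Since $|\varphi(y')|\le L|y'|$, every $y\in B_{r_0}$ with $y_n<-L|y'|$ satisfies $y_n<\varphi(y')$, hence $y\notin\{u>0\}$, and therefore $y\in\{u=0\}$ because $u\ge0$. Consequently the truncated cone $\mathcal{C}\coloneqq\{y\in B_{r_0}:\ y_n<-L|y'|\}$ lies in $\{u=0\}\cap B_{r_0}$, and it obeys the scale-invariant lower bound $|\mathcal{C}\cap B_r|\ge c(n,L)|B_r|$ for all $r\le r_0$. This gives \eqref{eq:densityEst} for balls of radius $r\le r_0$; for larger balls $B_r(x)\subset D$ one simply uses $|\{u=0\}\cap B_r(x)|\ge|\{u=0\}\cap B_{r_0}(x)|\ge c(n,L)(r_0/\diam D)^n|B_r|$. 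Thus \eqref{eq:densityEst} holds with a constant $c_D$ depending only on $n$, $L(D)$, $r_0(D)$ and $\diam D$, so Proposition~\ref{prop:5} applies and the proof is complete.

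I do not expect a genuine obstacle here. The one point requiring mild care is the extraction of \emph{uniform} cone parameters $r_0(D)$ and $L(D)$ valid for all free boundary points in the compact set $\overline D$, which is the usual book-keeping through the finitely many Lipschitz charts describing $\partial\{u>0\}$. A cosmetic alternative, avoiding the explicit cone computation, is to invoke the exterior corkscrew (measure density) property enjoyed by Lipschitz domains, which yields \eqref{eq:densityEst} directly.
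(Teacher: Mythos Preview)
Your proof is correct and follows exactly the same approach as the paper: the paper's proof is simply ``The result is an immediate consequence of Proposition~\ref{prop:5} and the fact that Lipschitz boundaries satisfy \eqref{eq:densityEst}.'' Your version just fills in the standard cone/corkscrew argument for the density estimate that the paper leaves implicit.
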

\begin{proof}
  The result is an immediate consequence of Proposition \ref{prop:5} and the fact
  that Lipschitz boundaries satisfy \eqref{eq:densityEst}.
\end{proof}

\section{Homogeneity of blow-up limits}
\label{sec:homogeneity-blow-ups}

In this section we prove the 1-homogeneity of blow-up limits by means of a Weiss
monotonicity formula.

\subsection{Stationary solutions}

To prove the monotonicity formula we first show the important fact that weak
solutions are also stationary solutions.
\begin{definition}
  \label{defn:2}
  Let \(u \in H^1_{\mathrm{loc}}(B_1)\) be a non-negative function such that
  \(u \in C(B_1) \cap C^2(\{u > 0\})\). We say \(u\) is a stationary solution of
  \eqref{eq:inhom-alt-caff-free-bound-form} if the inner variation of the functional
  \[
    \mathcal{F}_{f,Q}(v) = \frac{1}{2}\int_{B_1}\left(|\nabla v|^2 + Q^2\chi_{\{v > 0\}}\right)dx - \int_{B_1} fv dx
  \]
  vanishes at \(u\), that is, for any \(\xi \in C^{\infty}_{\mathrm{c}}(B_1;\R^n)\) we have
  \begin{align*}
    0 &= \delta\mathcal{F}_{f,Q}(u_{\varepsilon},B_1)[\xi]
        = \frac{1}{2}\int_{B_1}\left(|\nabla u|^2\Div \xi - 2\nabla u \cdot D\xi\nabla u + \Div(Q^2\xi)\chi_{\{u > 0\}}\right)dx
        - \int_{B_1} \Div(f\xi)u dx,
  \end{align*}
  where \(u_{\varepsilon} = u \circ \tau_{\varepsilon}^{-1}\) and
  \(\tau_{\varepsilon}\) is the diffeomorphism defined by
  \(\tau_{\varepsilon}(x) = x + \varepsilon\xi(x)\) for \(x \in B_1\) and \(\varepsilon\) small.
\end{definition}
\begin{proposition}
  \label{prop:6}
  Every weak solution is a stationary solution.
\end{proposition}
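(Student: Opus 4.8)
The plan is to follow the argument of \cite[Theorem 5.1]{MR1620644}, which establishes this in the classical case \(f \equiv 0\), \(Q\) constant, and to adapt it to \(f \geq 0\) with variable \(Q\). Since \(u \in C^{\infty}(\{u>0\})\) by interior Schauder estimates and \(u \in H^1_{\mathrm{loc}}(B_1)\) by Proposition \ref{prop:1}, the function \(u\) is admissible in Definition \ref{defn:2}, so it suffices to show that the explicit expression given there for \(\delta\mathcal{F}_{f,Q}(u)[\xi]\) vanishes for every \(\xi \in C^{\infty}_{\mathrm{c}}(B_1;\R^n)\). The two terms that do not see the free boundary are immediate: since \(u\) is Lipschitz (Proposition \ref{prop:1}) and \(\xi\) has compact support, \(\int_{B_1}\Div(f\xi)\,u\,dx = -\int_{B_1}f\,(\xi\cdot\nabla u)\,dx\); and since \(\{u>0\}\) has locally finite perimeter (Remark \ref{rem:2}), the Gauss--Green theorem gives \(\int_{B_1}\Div(Q^2\xi)\chi_{\{u>0\}}\,dx = -\int_{\partial_{\mathrm{red}}\{u>0\}}Q^2\,(\xi\cdot\nu_u)\,d\cH^{n-1}\), with \(\nu_u\) the interior normal to \(\{u>0\}\).

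Next I would treat \(\int_{B_1}\bigl(|\nabla u|^2\Div\xi - 2\nabla u\cdot D\xi\nabla u\bigr)dx\) by working in the positivity set, where \(u\) is smooth and \(-\Delta u = f\). By Sard's theorem, \(\{u=\varepsilon\}\) is a smooth hypersurface for a.e.\ \(\varepsilon>0\); fixing such a small \(\varepsilon\) and writing \(n_{\varepsilon} = -\nabla u/|\nabla u|\) for the outer unit normal of \(\{u>\varepsilon\}\) along \(\{u=\varepsilon\}\), I would integrate the stress-energy identity \(\partial_j\bigl(\partial_i u\,\partial_j u - \tfrac12|\nabla u|^2\delta_{ij}\bigr) = \partial_i u\,\Delta u = -f\,\partial_i u\) against \(\xi_i\) over \(\{u>\varepsilon\}\); using that \(\bigl(\partial_i u\,\partial_j u - \tfrac12|\nabla u|^2\delta_{ij}\bigr)(n_{\varepsilon})_j = \tfrac12|\nabla u|^2 (n_{\varepsilon})_i\) on \(\{u=\varepsilon\}\) this gives the Rellich--Pohozaev-type identity
\[
  \int_{\{u>\varepsilon\}}\bigl(|\nabla u|^2\Div\xi - 2\nabla u\cdot D\xi\nabla u\bigr)dx = -2\int_{\{u>\varepsilon\}}f\,(\xi\cdot\nabla u)\,dx - \int_{\{u=\varepsilon\}}|\nabla u|^2(\xi\cdot n_{\varepsilon})\,d\cH^{n-1}.
\]
Now let \(\varepsilon\downarrow 0\) through a.e.\ values: the bulk integrands are bounded (Proposition \ref{prop:1}) and vanish a.e.\ on \(\{u=0\}\) — there \(\nabla u = 0\), since a.e.\ point of \(\{u=0\}\) is a differentiability point and a minimum point of \(u\ge 0\) — and \(|\{u>\varepsilon\}|\to|\{u>0\}|\), so by dominated convergence the two bulk terms converge to the corresponding integrals over \(B_1\). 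Hence the surface integral converges as well, to some number \(L\), and inserting this together with the two formulas above into \(\delta\mathcal{F}_{f,Q}(u)[\xi]\), the terms \(\int_{B_1}f\,(\xi\cdot\nabla u)\) cancel and one is left with
\[
  \delta\mathcal{F}_{f,Q}(u)[\xi] = -\tfrac12\left(L + \int_{\partial_{\mathrm{red}}\{u>0\}}Q^2\,(\xi\cdot\nu_u)\,d\cH^{n-1}\right).
\]
So everything reduces to the \emph{level-set flux identity} \(L = \lim_{\varepsilon\downarrow 0}\int_{\{u=\varepsilon\}}|\nabla u|^2(\xi\cdot n_{\varepsilon})\,d\cH^{n-1} = -\int_{\partial_{\mathrm{red}}\{u>0\}}Q^2\,(\xi\cdot\nu_u)\,d\cH^{n-1}\).

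To prove this I would localize on the reduced boundary and reuse the bookkeeping of Steps 3--4 in the proof of Proposition \ref{prop:4}. Since \(\cH^{n-1}\bigl(\partial\{u>0\}\setminus\partial_{\mathrm{red}}\{u>0\}\bigr)=0\) (Remark \ref{rem:5}), for each \(\delta>0\) one can cover \(\supp\xi\cap\partial_{\mathrm{red}}\{u>0\}\), up to a set of \(\cH^{n-1}\)-measure \(\leq\delta\), by finitely many small balls of bounded overlap centered at points \(x_j\) where the asymptotic development of Proposition \ref{prop:2} holds with good flatness. On each such ball, Lemma \ref{lem:2} shows that \(\{u=\varepsilon\}\) is a Lipschitz graph of controlled \(\cH^{n-1}\)-measure along which \(\nabla u\) is close to \(Q(x_j)\nu_u(x_j)\) (so \(|\nabla u|^2\) is close to \(Q(x_j)^2\) and \(n_{\varepsilon}\) to \(-\nu_u(x_j)\)); hence the contribution of the ``good'' part of \(\{u=\varepsilon\}\) converges, after \(\varepsilon\downarrow 0\) and then \(\delta\downarrow 0\), to \(-\int_{\partial_{\mathrm{red}}\{u>0\}}Q^2\,(\xi\cdot\nu_u)\,d\cH^{n-1}\). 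On the remaining part of \(\{u=\varepsilon\}\) the integrand is bounded by \(C|\xi|\) (Proposition \ref{prop:1}) while its \(\cH^{n-1}\)-measure is \(\lesssim\delta\) by the covering together with \cite[Theorem 2.1]{MR3667700}, so this contribution tends to \(0\). This step — upgrading the pointwise blow-up information at reduced boundary points to a uniform statement about the surface measures and fluxes along the level sets \(\{u=\varepsilon\}\), with the errors absorbed by the perimeter — is the one I expect to be the main obstacle, and it is exactly where the finer results of \cite{MR3667700} enter the argument. Once the flux identity is in place, \(\delta\mathcal{F}_{f,Q}(u)[\xi]=0\) for all \(\xi\), i.e.\ \(u\) is a stationary solution; for \(f\equiv0\) with \(Q\) constant this recovers \cite[Theorem 5.1]{MR1620644}.
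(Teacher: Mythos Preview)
Your level-set route is essentially correct, but it differs from the paper's argument and is considerably more laborious. The paper never passes to the level sets \(\{u=\varepsilon\}\). Instead it uses the key consequence of Section \ref{sec:impr-flatn-weak} (via \cite[Corollary 4.1]{MR3667700}) that \(\partial_{\mathrm{red}}\{u>0\}\) is already a locally \(C^{\infty}\) surface; this, together with \(\cH^{n-1}(\partial\{u>0\}\setminus\partial_{\mathrm{red}}\{u>0\})=0\), lets one excise the \emph{singular} set with finitely many balls of total radius\({}^{n-1}\le\delta\), apply the divergence theorem to the stress-energy identity on the piecewise-\(C^1\) domain \(\{u>0\}\setminus\bigcup B_i\), and observe that the boundary term on the smooth free boundary vanishes because \(|\nabla u|^2=Q^2\) there classically. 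Only the contributions on \(\partial(\bigcup B_i)\) remain, and these are \(O(\delta)\). Thus the paper trades your level-set flux identity for the regularity of the reduced boundary, which yields a much shorter proof.

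One small inaccuracy in your sketch: on the ``bad'' portion of \(\{u=\varepsilon\}\) you claim its \(\cH^{n-1}\)-measure is \(\lesssim\delta\) via the covering and \cite[Theorem 2.1]{MR3667700}, but that theorem controls the \emph{free boundary} measure, not the level-set measure. What the bookkeeping of Proposition \ref{prop:4} (and Lemma \ref{lem:3}) actually gives is \(\int_{\text{bad}}|\nabla u|\,d\cH^{n-1}\lesssim\delta\). This is still enough, since your integrand is \(|\nabla u|^2(\xi\cdot n_\varepsilon)=O(|\nabla u|)\) thanks to the Lipschitz bound; but the argument should be phrased accordingly.
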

\begin{proof}
  We follow the ideas of the proof of \cite[Theorem 5.1]{MR1620644}. Let \(u\) be a
  weak solution in \(B_1\). We will use the facts that
  \(\cH^{n-1}(\partial\{u > 0\} \setminus \partial_{\mathrm{red}}\{u > 0\}) = 0\), that
  \(\partial_{\mathrm{red}}\{u > 0\}\) is open relative to \(\partial\{u > 0\}\) and that
  \(\partial_{\mathrm{red}}\{u > 0\}\) is locally a \(C^{\infty}\)-surface in \(B_1\) (see
  \cite[Corollary 4.1]{MR3667700}).

  Let \(\xi \in C^{\infty}_{\mathrm{c}}(B_1;\R^n)\). For any
  \(\delta > 0\) we can find a covering
  \[
    \supp \xi \cap (\partial\{u > 0\} \setminus \partial_{\mathrm{red}}\{u > 0\}) \subset \bigcup_{i=1}^{\infty} B_{r_i}(x_i)
  \]
  such that \(x_i \in \partial\{u > 0\} \setminus \partial_{\mathrm{red}}\{u > 0\}\) and \(\sum_{i=1}^{\infty} r_i^{n-1} \leq \delta\). Since \(\partial\{u > 0\} \setminus \partial_{\mathrm{red}}\{u > 0\}\) is closed,
  \(\supp \xi \cap (\partial\{u > 0\} \setminus \partial_{\mathrm{red}}\{u > 0\})\) is compact and we may
  pass to a finite subcovering
  \[
    \supp \xi \cap (\partial\{u > 0\} \setminus \partial_{\mathrm{red}}\{u > 0\}) \subset \bigcup_{i=1}^{N_{\delta}} B_{r_i}(x_i).
  \]
  Denote \(B_i \coloneqq B_{r_i}(x_i)\) and split the inner variation as follows:
  \begin{align*}
    \int_{B_1} &\left(\frac{1}{2}|\nabla u|^2\Div \xi - \nabla u \cdot D\xi \nabla u + \frac{1}{2}\Div(Q^2\xi)\chi_{\{u > 0\}} - \Div(f\xi)u\right) \\
          &= \int_{\{u > 0\} \cap \bigcup_{i=1}^{N_{\delta}} B_i} \left(\frac{1}{2}|\nabla u|^2\Div \xi - \nabla u \cdot D\xi \nabla u + \frac{1}{2}\Div(Q^2\xi) - \Div(f\xi)u\right) \\
          &\quad+ \int_{\{u > 0\} \setminus \bigcup_{i=1}^{N_{\delta}} B_i} \left(\frac{1}{2}|\nabla u|^2\Div \xi - \nabla u \cdot D\xi \nabla u + \frac{1}{2}\Div(Q^2\xi) - \Div(f\xi)u\right) \\
          &\eqqcolon I_1 + I_2.
  \end{align*}

  For the first integral we have that
  \begin{equation}
    \label{eq:5}
    |I_1| \leq C_1\sum_{i=1}^{N_{\delta}} r_i^n \leq C_1\delta^{\frac{n}{n-1}}
  \end{equation}
  using the fact that
  \(r_k^{n-1} \leq \sum_{i=1}^{N_{\delta}} r_i^{n-1} \leq \delta\) for all \(k\). On the other hand, by
  Remark \ref{rem:3} we also know
  \(u \in C^{\infty}(\overline{\{u > 0\}} \cap (\supp \xi \setminus \bigcup_{i=1}^{N_{\delta}} B_{r_i}(x_i)))\) and
  the fact that \(u\) fulfills the boundary condition
  \(|\nabla u| = \partial_{\nu}u = Q\) on
  \(\partial\{u > 0\} \cap (\supp \xi \setminus \bigcup_{i=1}^{N_{\delta}} B_{r_i}(x_i))\) in the classical sense.
  Moreover, in \(\{u > 0\}\) we have
  \begin{align*}
    \frac{1}{2}|\nabla u|^2\Div \xi &- \nabla u \cdot D\xi\nabla u = \\
                   &= \sum_{i,j} \frac{1}{2}(\partial_iu)^2 \partial_j\xi_j - \partial_ju \partial_i\xi_j \partial_iu \\
                   &= \sum_{i,j} \frac{1}{2}(\partial_iu)^2 \partial_j\xi_j - \partial_i(\partial_ju\,\xi_j\,\partial_iu) + \partial_{ij}u\,\xi_j\,\partial_iu + \partial_ju\,\xi_j\,\partial_{ii}u \\
                   &= \sum_{i,j} \frac{1}{2}(\partial_iu)^2 \partial_j\xi_j - \partial_i(\partial_ju\,\xi_j\,\partial_iu) + \frac{1}{2}\partial_j(\partial_iu\,\xi_j\,\partial_iu) - \frac{1}{2}(\partial_iu)^2 \partial_j\xi_j + \partial_ju\,\xi_j\,\partial_{ii}u \\
                   &= \sum_{i,j} -\partial_i(\partial_ju \xi_j \partial_iu) + \frac{1}{2}\partial_j(\partial_iu\,\xi_j\,\partial_iu) + \partial_ju\,\xi_j\,\partial_{ii}u \\
                   &= \Div\left(\frac{1}{2}|\nabla u|^2\xi - (\nabla u\cdot\xi)\nabla u\right) - f\nabla u\cdot\xi,
  \end{align*}
  and therefore
  \begin{align*}
    \frac{1}{2}|\nabla u|^2\Div \xi &- \nabla u \cdot D\xi\nabla u - \Div(f\xi)u = \\
                   &= \Div\left(\frac{1}{2}|\nabla u|^2\xi - (\nabla u\cdot\xi)\nabla u\right)
                     - f\nabla u\cdot\xi - u\Div(f\xi) \\
                   &= \Div\left(\frac{1}{2}|\nabla u|^2\xi - (\nabla u\cdot\xi)\nabla u\right) - \Div(uf\xi).
  \end{align*}
  Thus, since
  \(\supp \xi \cap \left(\{u > 0\} \setminus \bigcup_{i=1}^{N_{\delta}} B_i\right)\) is a piecewise
  \(C^1\) domain, we can apply the divergence theorem to \(I_2\) to obtain
  \begin{align*}
    I_2 &= -\int_{\partial\left(\{u > 0\} \setminus \bigcup_{i=1}^{N_{\delta}} B_i\right)}
          \left(\frac{1}{2}|\nabla u|^2\xi\cdot\nu - \nabla u\cdot\xi\nabla u\cdot\nu + \frac{1}{2}Q^2\xi\cdot\nu - uf\xi\cdot\nu\right)d\cH^{n-1} \\
        &= -\int_{\partial\{u > 0\} \setminus \bigcup_{i=1}^{N_{\delta}} B_i}
          \frac{1}{2}\left(Q^2 - |\nabla u|^2\right)\xi\cdot\nu d\cH^{n-1} \\
        &\quad- \int_{\{u > 0\} \cap \partial\left(\bigcup_{i=1}^{N_{\delta}} B_i\right)}
          \left(\frac{1}{2}|\nabla u|^2\xi\cdot\nu - \nabla u\cdot\xi\nabla u\cdot\nu + \frac{1}{2}Q^2\xi\cdot\nu - uf\xi\cdot\nu\right)d\cH^{n-1} \\
        &= -\int_{\{u > 0\} \cap \partial\left(\bigcup_{i=1}^{N_{\delta}} B_i\right)}
          \left(\frac{1}{2}|\nabla u|^2\xi\cdot\nu - \nabla u\cdot\xi\nabla u\cdot\nu + \frac{1}{2}Q^2\xi\cdot\nu - uf\xi\cdot\nu\right)d\cH^{n-1}.
  \end{align*}
  Hence, we see that
  \begin{equation}
    \label{eq:6}
    |I_2| \leq C_2\sum_{i=1}^{N_{\delta}} r_i^{n-1} \leq C_2\delta.
  \end{equation}
  Combining \eqref{eq:5} and \eqref{eq:6} yields
  \[
    \left|\int_{B_1} \left(\frac{1}{2}|\nabla u|^2\Div \xi - \nabla u \cdot D\xi\nabla u + \frac{1}{2}\Div(Q^2\xi)\chi_{\{u > 0\}} + \Div(f\xi)u\right)\right| \leq
    C_1\delta^{\frac{n}{n-1}} + C_2\delta
  \]
  and letting \(\delta \to 0\) gives the desired result.
\end{proof}

\subsection{Weiss monotonicity formula}

For a given boundary data \(Q\) and a non-negative function \(u \in H^1(B_1)\), we
define the modified Weiss function
\[
  W_Q(u) \coloneqq
  \int_{B_1} |\nabla u|^2dx + Q(0)^2|\{u > 0\} \cap B_1| - \int_{\partial B_1} u^2d\cH^{n-1}.
\]
Then, for any \(u \in H^1(B_r(x_0))\), \(u \geq 0\), the scaling
\[
  u_r(x) = \frac{1}{r}u(x_0 + rx), \qquad Q_r(x) = Q(x_0 + rx),
\]
satisfies that \(u_r \in H^1(B_1)\) and
\begin{align*}
  W_Q(u_r) &= \int_{B_1} |\nabla u_r|^2dx
             + Q_r(0)^2|\{u_r > 0\} \cap B_1|
             - \int_{\partial B_1} u_r^2d\cH^{n-1} \\
           &= \frac{1}{r^n}\int_{B_r(x_0)} |\nabla u|^2dx
             + \frac{Q(x_0)^2}{r^n}|\{u > 0\} \cap B_r(x_0)|
             - \frac{1}{r^{n+1}}\int_{\partial B_r(x_0)} u^2d\cH^{n-1}.
\end{align*}

By \cite[Lemmas 9.1, Lemma 9.2]{MR4807210} the function \(r \mapsto W_Q(u_r)\) is
continuous and a.e. differentiable for any \(u \in H^1(B_{\delta}(x_0))\) and
\(r \in (0,\delta)\). Moreover, if \(z_r(x) = |x|u_r(\frac{x}{|x|})\) is the 1-homogeneous
extension of \(u_r\) from \(\partial B_1\) to \(B_1\), then \cite[Lemma 9.2]{MR4807210} also
gives an explicit expression for the derivative of \(W_Q(u_r)\), namely
\[
  \frac{\partial}{\partial r}W_Q(u_r) =
  \frac{n}{r}\left( W_Q(z_r) - W_Q(u_r) \right)
  + \frac{1}{r}\int_{\partial B_1} |\nabla u_r \cdot x - u_r|^2d\cH^{n-1}.
\]
\begin{lemma}
  \label{lem:4}
  Let \(u\) be a weak solution in \(B_1\). Then, for every \(x_0 \in B_1\) and every
  \(0 < r < r_0\) with \(r_0 \coloneqq \frac{1}{2}\dist(x_0,\partial B_1)\), we have
  \[
    W_Q(z_r) - W_Q(u_r) \geq \frac{1}{n}\int_{\partial B_1} |\nabla u_r \cdot x - u_r|^2d\cH^{n-1} - Cr,
  \]
  where \(C\) is a positive constant that depends on \(n\),
  \(\|u\|_{\infty}\), \(\|f\|_{\infty}\), \(\|\nabla f\|_{\infty}\), \(\|Q\|_{\infty}\) and \(\|\nabla Q\|_{\infty}\).
\end{lemma}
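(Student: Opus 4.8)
The plan is to derive the estimate from the fact that weak solutions are stationary (Proposition~\ref{prop:6}), by combining a Rellich--Pohozaev identity with an energy identity and with the explicit form of $W_Q(z_r)$ dictated by the $1$-homogeneity of $z_r$. After a translation we may assume $x_0=0$. Fix $0<r<r_0$ and set $u_r(x)=\frac1r u(rx)$, $Q_r(x)=Q(rx)$, $f_r(x)=rf(rx)$; since $B_{2r}\subset B_1$, the function $u_r$ is a weak solution of \eqref{eq:inhom-alt-caff-free-bound-form} in $B_2$ with data $(f_r,Q_r)$, hence a stationary solution there by Proposition~\ref{prop:6}.

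The first ingredient is a Rellich--Pohozaev identity obtained by feeding the vanishing inner variation the radial cut-off field $\xi_\delta(x)=\psi_\delta(|x|)x$, where $\psi_\delta$ equals $1$ on $[0,1-\delta]$, is affine on $[1-\delta,1]$ and vanishes on $[1,\infty)$ (a Lipschitz field supported in $\overline{B_1}\subset\subset B_2$, admissible after mollification). Using $\Div\xi_\delta=n\psi_\delta+|x|\psi_\delta'$ and $\nabla u_r\cdot D\xi_\delta\nabla u_r=\psi_\delta|\nabla u_r|^2+\frac{\psi_\delta'}{|x|}(x\cdot\nabla u_r)^2$, and letting $\delta\to0$ so that $\psi_\delta'$ localises to $\partial B_1$, one gets
\begin{align*}
  0&=\tfrac{n-2}{2}\int_{B_1}|\nabla u_r|^2-\tfrac12\int_{\partial B_1}|\nabla u_r|^2+\int_{\partial B_1}(\partial_\nu u_r)^2+\tfrac{n}{2}\int_{B_1}Q_r^2\chi_{\{u_r>0\}}-\tfrac12\int_{\partial B_1}Q_r^2\chi_{\{u_r>0\}}\\
  &\qquad+\int_{B_1}Q_r\nabla Q_r\cdot x\,\chi_{\{u_r>0\}}-\int_{B_1}(\nabla f_r\cdot x)u_r-n\int_{B_1}f_r u_r+\int_{\partial B_1}f_r u_r,
\end{align*}
with $\partial_\nu u_r=\nabla u_r\cdot x$ on $\partial B_1$; here one works inside the smooth set $\{u_r>0\}$, uses $\cH^{n-1}(\partial\{u_r>0\}\setminus\partial_{\mathrm{red}}\{u_r>0\})=0$, and discards the free boundary term thanks to $|\nabla u_r|=Q_r$ there (Remark~\ref{rem:3}). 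The second ingredient is the energy identity: testing condition~\autoref{item:3} for $u_r$ against $u_r\psi_\delta$ and letting $\delta\to0$ (the integral over $\partial_{\mathrm{red}}\{u_r>0\}$ drops since $u_r=0$ there) gives $\int_{B_1}|\nabla u_r|^2=\int_{\partial B_1}u_r\partial_\nu u_r+\int_{B_1}f_r\chi_{\{u_r>0\}}u_r$.

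From the $1$-homogeneity of $z_r(x)=|x|u_r(x/|x|)$ one computes, in polar coordinates, $\int_{B_1}|\nabla z_r|^2=\frac1n\int_{\partial B_1}(u_r^2+|\nabla_\theta u_r|^2)$, $|\{z_r>0\}\cap B_1|=\frac1n\cH^{n-1}(\{u_r>0\}\cap\partial B_1)$ and $\int_{\partial B_1}z_r^2=\int_{\partial B_1}u_r^2$, so that $n\,W_Q(z_r)=\int_{\partial B_1}|\nabla_\theta u_r|^2+Q(0)^2\cH^{n-1}(\{u_r>0\}\cap\partial B_1)-(n-1)\int_{\partial B_1}u_r^2$. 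I would then expand $n(W_Q(z_r)-W_Q(u_r))$, use $|\nabla u_r|^2=(\partial_\nu u_r)^2+|\nabla_\theta u_r|^2$ on $\partial B_1$, eliminate $\int_{B_1}|\nabla u_r|^2$ via the energy identity and $Q(0)^2|\{u_r>0\}\cap B_1|$ via the Pohozaev identity (after replacing $Q_r^2$ by $Q(0)^2$ in the latter, at the cost of an error $O(\|Q\|_\infty\|\nabla Q\|_\infty r)$ per integral). All bulk terms and the $|\nabla_\theta u_r|^2$ and $\cH^{n-1}$ boundary terms cancel, leaving
\[
  n\bigl(W_Q(z_r)-W_Q(u_r)\bigr)=\int_{\partial B_1}(u_r-\partial_\nu u_r)^2\,d\cH^{n-1}+\mathcal R,
\]
where $\mathcal R$ is a fixed linear combination of $\int_{B_1}Q_r\nabla Q_r\cdot x\,\chi_{\{u_r>0\}}$, $\int_{B_1}(\nabla f_r\cdot x)u_r$, $\int_{B_1}f_r\chi_{\{u_r>0\}}u_r$, $\int_{\partial B_1}f_r u_r$ and the $Q_r^2\to Q(0)^2$ errors. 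Since $\partial_\nu u_r=\nabla u_r\cdot x$ on $\partial B_1$, dividing by $n$ gives the claim (in fact an equality up to $\mathcal R$) once $|\mathcal R|\le Cr$ is verified.

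The main obstacle is precisely this last bound. The $Q$-terms are handled with $|\nabla Q_r|\le r\|\nabla Q\|_{L^\infty}$. For the $f$-terms one has $|\nabla f_r|\le r^2\|\nabla f\|_{L^\infty}$ and $|f_r|\le r\|f\|_{L^\infty}$, so everything reduces to bounding $\int_{B_1}|u_r|$ and $\int_{\partial B_1}|u_r|$ by a constant independent of $r$; this is where the non-degeneracy and local Lipschitz estimates (Proposition~\ref{prop:1}, Remark~\ref{rem:1}) enter, yielding $\|u_r\|_{L^\infty(B_1)}\le C(n,\|u\|_\infty,\|f\|_\infty)$ and hence $|\mathcal R|\le Cr$ with $C$ depending only on the quantities in the statement. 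Apart from this, the remaining points are routine: that the traces on $\partial B_1$ in the two identities are well defined and that $\psi_\delta'$ localises to $\partial B_1$ as $\delta\to0$, both of which follow from $u_r$ being locally Lipschitz and smooth off the $\cH^{n-1}$-negligible set $\partial\{u_r>0\}\setminus\partial_{\mathrm{red}}\{u_r>0\}$.
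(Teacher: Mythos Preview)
Your approach is essentially the paper's: both feed the stationary condition (Proposition~\ref{prop:6}) a radial cut-off field $\xi=\psi(|x|)x$ to obtain a Rellich--Pohozaev identity, combine it with the energy identity $\int_{B_1}|\nabla u_r|^2=\int_{\partial B_1}u_r\partial_\nu u_r+\int_{B_1}f_r u_r$, plug in the explicit polar-coordinate expression for $W_Q(z_r)$, and then bound the leftover $Q$- and $f$-terms by $Cr$; the only cosmetic difference is that you rescale first and compute in $B_1$, while the paper computes in $B_r$ and rescales afterwards. One small remark: your parenthetical about ``working inside $\{u_r>0\}$ and discarding the free boundary term thanks to $|\nabla u_r|=Q_r$'' is superfluous here, since the stationary identity from Proposition~\ref{prop:6} already lives on all of $B_2$ with no free-boundary contribution, so no such term ever appears when you plug in $\xi_\delta$.
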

\begin{proof}
  We follow the ideas of \cite[Lemma 9.8]{MR4807210}. Without loss of generality
  assume \(x_0 = 0\). For every \(\varepsilon > 0\), consider the function
  \(\phi_{\varepsilon} \in C^{\infty}_{\mathrm{c}}(B_r)\) such that
  \[
    \phi_{\varepsilon} = 1 \quad\text{in } B_{(1-\varepsilon)r}, \qquad
    \nabla\phi_{\varepsilon}(x) = -\frac{1}{r\varepsilon}\frac{x}{|x|} + o(\varepsilon) \quad\text{in } B_r \setminus B_{(1-\varepsilon)r},
  \]
  and define the test vector field \(\xi_{\varepsilon}(x) = \phi_{\varepsilon}(x)x\). Since
  \(u\) is a stationary solution in \(B_1\) by Proposition \ref{prop:6}, testing
  \(u\) against \(\xi_{\varepsilon}\) we obtain
  \begin{align*}
    0 = \delta\mathcal{F}_{f,Q}(u)[\xi_{\varepsilon}]
    &= \frac{1}{2}\int \left( |\nabla u|^2\Div \xi_{\varepsilon} - 2\nabla u\cdot D\xi_{\varepsilon}\nabla u
      + (Q^2\Div \xi_{\varepsilon} + \nabla Q^2\cdot\xi_{\varepsilon})\chi_{\{u > 0\}}\right)dx \\
    &\quad- \int (\nabla f\cdot\xi_{\varepsilon} + f\Div \xi_{\varepsilon})u dx \\
    &= \frac{1}{2}\int \left( - 2\phi_{\varepsilon}|\nabla u|^2 - 2(\nabla u\cdot x)(\nabla u\cdot\nabla\phi_{\varepsilon})\right)dx \\
    &\quad+ \frac{1}{2}\int \left((n\phi_{\varepsilon} + \nabla\phi_{\varepsilon}\cdot x)(|\nabla u|^2 + Q^2\chi_{\{u > 0\}})
      + \phi_{\varepsilon}\nabla Q^2\cdot x\chi_{\{u > 0\}}\right)dx \\
    &\quad- \int \left( (n\phi_{\varepsilon} + \nabla\phi_{\varepsilon}\cdot x)uf + \phi_{\varepsilon}u\nabla f\cdot x \right)dx \\
    &= \frac{1}{2}\int_{B_r} \left( (n-2)|\nabla u|^2 + (nQ^2 + \nabla Q^2\cdot x)\chi_{\{u > 0\}} \right)\phi_{\varepsilon}dx \\
    &\quad+ \frac{1}{2r\varepsilon}\int_{B_r \setminus B_{(1-\varepsilon)r}} \left( 2\left(\nabla u\cdot \frac{x}{|x|}\right)^2
      - |\nabla u|^2 - Q^2\chi_{\{u > 0\}} \right)|x|dx \\
    &\quad- \int_{B_r} \left( nuf + u\nabla f\cdot x \right)\phi_{\varepsilon}dx + \frac{1}{r\varepsilon}\int_{B_r \setminus B_{(1-\varepsilon)r}} uf|x|dx + o(\varepsilon).
  \end{align*}
  Now we can let \(\varepsilon \to 0\) and multiply everything by 2 to get
  \begin{align*}
    0 &= \int_{B_r} \left( (n-2)|\nabla u|^2 + (nQ^2 + \nabla Q^2\cdot x)\chi_{\{u > 0\}} \right)dx \\
      &\quad+ r\int_{\partial B_r} \left( 2\left(\nabla u\cdot \frac{x}{|x|}\right)^2
        - |\nabla u|^2 - Q^2\chi_{\{u > 0\}} \right)d\cH^{n-1} \\
      &\quad- 2\int_{B_r} \left( nuf + u\nabla f\cdot x \right)dx + 2r\int_{\partial B_r} uf d\cH^{n-1}.
  \end{align*}
  Using that \(-\Delta u = f\) in \(\{u > 0\}\) we can compute
  \[
    \int_{B_r} |\nabla u|^2dx
    = \int_{B_r} \Div(u\nabla u)dx + \int_{B_r} ufdx
    = \int_{\partial B_r} u\nabla u\cdot \frac{x}{|x|} d\cH^{n-1} + \int_{B_r} ufdx.
  \]
  Inserting this computation in the term \(-2|\nabla u|^2\) of the first integral
  and rescaling everything to \(B_1\) we find that
  \begin{equation}
    \label{eq:weiss_computation}
    \begin{aligned}
      0 &= \int_{B_1} \left( n|\nabla u_r|^2 + (nQ_r^2 + \nabla Q_r^2\cdot x)\chi_{\{u_r > 0\}} \right)dx \\
        &\quad+ \int_{\partial B_1} \left( - 2u_r\nabla u_r\cdot x + 2(\nabla u_r\cdot x)^2
          - |\nabla u_r|^2 - Q_r^2\chi_{\{u_r > 0\}} \right)d\cH^{n-1} \\
        &\quad- 2\int_{B_1} \left( (n+1)u_rf_r + u_r\nabla f_r\cdot x \right)dx + 2\int_{\partial B_1} u_rf_r d\cH^{n-1}.
    \end{aligned}
  \end{equation}
  With this we see that
  \begin{align}
    \label{eq:weiss_error}
    W_Q(z_r) &- W_Q(u_r) = \nonumber \\
             &= \int_{B_1} |\nabla z_r|^2dx + Q_r(0)^2|\{z_r > 0\} \cap B_1|
               - \int_{B_1} |\nabla u_r|^2dx - Q_r(0)^2|\{u_r > 0\} \cap B_1| \nonumber \\
             &= \frac{1}{n}\int_{\partial B_1} \left( |\nabla u_r|^2 - |\nabla u_r\cdot x|^2 \right)d\cH^{n-1}
               + \frac{1}{n}\int_{\partial B_1} u_r^2d\cH^{n-1} \nonumber \\
             &\quad+ \frac{1}{n}Q_r(0)^2\cH^{n-1}(\{u_r > 0\} \cap \partial B_1)
               - \int_{B_1} |\nabla u_r|^2dx - Q_r(0)^2|\{u_r > 0\} \cap B_1| \nonumber \\
             &= \frac{1}{n}\int_{\partial B_1} |\nabla u_r\cdot x - u_r|^2d\cH^{n-1}
               + \frac{1}{n}\int_{\partial B_1} \left( |\nabla u_r|^2 - 2|\nabla u_r\cdot x|^2 \right)d\cH^{n-1} \nonumber \\
             &\quad+ \frac{2}{n}\int_{\partial B_1} u_r\nabla u_r\cdot xd\cH^{n-1}
               + \frac{1}{n}Q_r(0)^2\cH^{n-1}(\{u_r > 0\} \cap \partial B_1) \nonumber \\
             &\quad- \int_{B_1} |\nabla u_r|^2dx - Q_r(0)^2|\{u > 0\} \cap B_1| \nonumber \\
             &= \frac{1}{n}\int_{\partial B_1} |\nabla u_r\cdot x - u_r|^2d\cH^{n-1} + \sum_{i=1}^6 \text{Error}_i.
  \end{align}
  Here we have used: in the first step, the fact that \(u_r \equiv z_r\) on
  \(\partial B_1\); in the second step, explicit expressions for integrals involving
  \(z_r\) by integration in polar coordinates (see \cite[(9.5)]{MR4807210}); in the
  third step, we add and subtract
  \(\frac{1}{n}\int_{\partial B_1} |\nabla u_r\cdot x - u_r|^2d\cH^{n-1}\). In the last step, we have
  introduced the terms in \eqref{eq:weiss_computation} (divided by \(n\)) which
  leaves us with the following error terms:
  \begin{align*}
    \text{Error}_1 &\coloneqq \frac{1}{n}\left( Q_r(0)^2\cH^{n-1}(\{u_r > 0\} \cap \partial B_1)
                     - \int_{\partial B_1} Q_r^2\chi_{\{u_r > 0\}}d\cH^{n-1}\right), \\
    \text{Error}_2 &\coloneqq -\left( Q_r(0)^2|\{u_r > 0\} \cap B_1|
                     - \int_{B_1} Q_r^2\chi_{\{u_r > 0\}}dx\right), \\
    \text{Error}_3 &\coloneqq \frac{1}{n}\int_{B_1} \nabla Q_r^2\cdot x\chi_{\{u_r > 0\}}dx,
                     \hspace{3em} \text{Error}_4 \coloneqq -\frac{2}{n}\int_{B_1} u_r\nabla f_r\cdot xdx, \\
    \text{Error}_5 &\coloneqq -\frac{2(n+1)}{n}\int_{B_1} u_rf_rdx,
                     \hspace{3.925em} \text{Error}_6 \coloneqq \frac{2}{n}\int_{\partial B_1} u_rf_rd\cH^{n-1}.
  \end{align*}
  For the first two error terms we have
  \begin{align*}
    |\text{Error}_1|,\,|\text{Error}_2| &\leq C_n\|Q_r(0) + Q_r\|_{\infty}\|Q_r(0) - Q_r\|_{\infty} \\
                                        &\leq 2C_n\|Q\|_{\infty}\|\nabla Q\|_{\infty}r \\
                                        &= O(r).
  \end{align*}
  For the third term we use that
  \(\nabla Q_r^2(x) = 2Q_r(x)\nabla Q_r(x) = 2rQ_r(x)\nabla Q(rx)\) to obtain
  \[
    |\text{Error}_3|
    \leq C_nr\int_{B_1} |Q_r(x)\nabla Q(rx)|d\cH^{n-1}
    \leq C_n\|Q\|_{\infty}\|\nabla Q\|_{\infty}r
    = O(r).
  \]
  For the fourth term we use the boundedness of \(u\) to obtain
  \[
    |\text{Error}_4| \leq C_n\|u\|_{\infty}\|\nabla f\|_{\infty}r = O(r).
  \]
  For the last two error terms we have
  \begin{align*}
    |\text{Error}_5| &\leq C_n\|f\|_{\infty}\int_{B_1} u(rx) dx = O(r), \\
    |\text{Error}_6| &\leq C_n\|f\|_{\infty}\int_{\partial B_1} u(rx) dx = O(r),
  \end{align*}
  since \(u\) is Lipschitz in \(B_{r_0}\). Inserting this estimates in
  \eqref{eq:weiss_error} yields
  \[
    W_Q(z_r) - W_Q(u_r) \geq \frac{1}{n}\int_{\partial B_1} |\nabla u_r\cdot x - u_r|^2d\cH^{n-1} - Cr
  \]
  as claimed.
\end{proof}
\begin{proposition}
  \label{prop:7}
  Let \(u\) be a weak solution in \(B_1\) and let
  \(x_0 \in \partial\{u > 0\} \cap B_1\) be any free boundary point. For any blow-up sequence
  \(u_{r_k}\) of \(u\) at \(x_0\), the blow-up limit \(u_0\) (see Lemma \ref{lem:1})
  is a 1-homogeneous function.
\end{proposition}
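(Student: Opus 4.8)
The plan is to carry out a Weiss-type monotonicity argument, using Lemma~\ref{lem:4} as an \emph{almost}-monotonicity statement. Set $r_0:=\tfrac12\dist(x_0,\partial B_1)$ and write $u_r,z_r$ for the scalings centered at $x_0$, as above. Combining Lemma~\ref{lem:4} with the explicit expression for $\frac{\partial}{\partial r}W_Q(u_r)$ recalled above, we get for a.e.\ $r\in(0,r_0)$
\[
  \frac{\partial}{\partial r}W_Q(u_r)\geq\frac{2}{r}\int_{\partial B_1}|\nabla u_r\cdot x-u_r|^2\,d\cH^{n-1}-nC\geq -nC,
\]
so that $r\mapsto W_Q(u_r)+nCr$ is non-decreasing on $(0,r_0)$; here one uses that, by \cite[Lemmas~9.1, 9.2]{MR4807210}, $r\mapsto W_Q(u_r)$ is locally absolutely continuous with the stated derivative, so the a.e.\ bound integrates. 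Since $u$ is locally Lipschitz by Proposition~\ref{prop:1} and $u(x_0)=0$ (because $x_0\in\partial\{u>0\}$ and $u$ is continuous and non-negative), we have $|u_r|\leq L$ on $B_1$ for every $r<r_0$, where $L$ is the Lipschitz constant of $u$ near $x_0$; hence $W_Q(u_r)\geq -L^2\cH^{n-1}(\partial B_1)$ is bounded below, and the limit $W_0:=\lim_{r\to 0^+}W_Q(u_r)$ exists and is finite.

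Next I would extract a quantitative decay of the radial-derivative defect. Integrating the inequality above from $\sigma$ to $\tau<r_0$, letting $\sigma\to 0^+$, and using $W_Q(u_\sigma)+nC\sigma\geq W_0$, we obtain
\[
  \int_0^{\tau}\frac1r\left(\int_{\partial B_1}|\nabla u_r\cdot x-u_r|^2\,d\cH^{n-1}\right)dr\leq\tfrac12\bigl(W_Q(u_\tau)+nC\tau-W_0\bigr)<\infty .
\]
In particular $r\mapsto\frac1r\int_{\partial B_1}|\nabla u_r\cdot x-u_r|^2\,d\cH^{n-1}$ is integrable near $r=0$.

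Now I would pass to the limit along the blow-up sequence. Since $(u_{r_k})_\rho=u_{r_k\rho}$, polar coordinates (using $dy=\rho^{n-1}\,d\rho\,d\cH^{n-1}$, hence $\rho^{-1}d\rho\,d\cH^{n-1}=|y|^{-n}\,dy$) give, for any $0<s<t<\infty$,
\[
  \int_{B_t\setminus B_s}\left|\nabla u_{r_k}(y)\cdot\frac{y}{|y|}-\frac{u_{r_k}(y)}{|y|}\right|^2|y|^{-n}\,dy=\int_{r_k s}^{r_k t}\frac1r\left(\int_{\partial B_1}|\nabla u_r\cdot x-u_r|^2\,d\cH^{n-1}\right)dr,
\]
and the right-hand side tends to $0$ as $k\to\infty$ by the integrability just established (the interval $(r_ks,r_kt)$ shrinks to $\{0\}$). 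On the other hand $u_{r_k}\to u_0$ in $H^1(B_t\setminus B_s)$ by \autoref{item:5} of Lemma~\ref{lem:1}, so the left-hand side converges to $\int_{B_t\setminus B_s}\left|\nabla u_0(y)\cdot\frac{y}{|y|}-\frac{u_0(y)}{|y|}\right|^2|y|^{-n}\,dy$. Hence this integral vanishes for every $0<s<t<\infty$, which forces $\nabla u_0(y)\cdot y=u_0(y)$ for a.e.\ $y\in\R^n$; since $u_0$ is locally Lipschitz, $t\mapsto u_0(ty)/t$ then has a.e.\ vanishing derivative and is therefore constant in $t>0$ for a.e.\ $y$, i.e.\ $u_0$ is $1$-homogeneous.

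I expect the conceptual difficulty to have been taken care of already in Lemma~\ref{lem:4}: once the almost-monotone quantity and its derivative lower bound are in hand, the $1$-homogeneity of the blow-up is a soft consequence, the only routine checks being the two polar-coordinate identities and the absolute continuity of $r\mapsto W_Q(u_r)$. Note that, in contrast with Corollary~\ref{cor:3}, no positive-density condition on $\{u=0\}$ enters here, since the argument never uses that $u_0$ is itself a weak solution.
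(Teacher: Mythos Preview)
Your proof is correct and follows essentially the same approach as the paper's: both combine Lemma~\ref{lem:4} with the explicit derivative formula for $W_Q(u_r)$ to obtain almost-monotonicity, deduce existence of the limit $\lim_{r\to 0}W_Q(u_r)$, rewrite the integrated defect via polar coordinates as a volume integral over an annulus for $u_{r_k}$, and pass to the limit using the $H^1_{\mathrm{loc}}$ convergence from Lemma~\ref{lem:1}. Your version is in fact slightly more careful than the paper's in two places: you explicitly invoke local absolute continuity of $r\mapsto W_Q(u_r)$ to justify integrating the a.e.\ derivative inequality (the paper only states continuity and a.e.\ differentiability, which by itself would not suffice), and you phrase the vanishing of the annular integral via integrability of $r^{-1}\int_{\partial B_1}|\nabla u_r\cdot x-u_r|^2\,d\cH^{n-1}$ near $0$, whereas the paper argues directly that $W_Q(u_{Rr_k})-W_Q(u_{Sr_k})\to 0$.
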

\begin{proof}
  By Lemma \ref{lem:4}, for all \(0 < r < r_0\) with
  \(r_0 \coloneqq \dist(x_0,\partial\Omega)\) we have
  \begin{equation}
    \label{eq:weiss_deriv_lower_bound}
    \frac{\partial}{\partial r}W_Q(u_r)
    \geq \frac{2}{r}\int_{\partial B_1} |\nabla u_r\cdot x - u_r|^2d\cH^{n-1} - C
    \geq -C.
  \end{equation}
  Since \(u\) is Lipschitz in \(B_{r_0}(x_0)\), we also have \(|u_r(x)| \leq L|x|\) for
  all \(0 < r < r_0\) and therefore
  \[
    W_Q(u_r) \geq -\int_{\partial B_1} u_r^2d\cH^{n-1} \geq -L^2.
  \]
  Hence, \(r \mapsto W_Q(u_r)\) is a continuous and a.e. differentiable function in
  the interval \((0,r_0)\) which is bounded below and whose derivative is also
  bounded below in \((0,r_0)\). It follows that the limit
  \begin{equation}
    \label{eq:weiss_limit_exists}
    l = \lim_{r \to 0} W_Q(u_r)
  \end{equation}
  exists and is finite.

  Fix \(0 < S < R < \infty\). Integrating \eqref{eq:weiss_deriv_lower_bound} from
  \(Sr_k\) to \(Rr_k\) gives us
  \[
    W_Q(u_{Rr_k}) - W_Q(u_{Sr_k})
    = \int_{Sr_k}^{Rr_k} \frac{2}{r}\int_{\partial B_1} |\nabla u_r\cdot x - u_r|^2d\cH^{n-1}dr
    - C(Rr_k - Sr_k).
  \]
  We can rewrite the integral term as follows:
  \begin{align*}
    \int_{Sr_k}^{Rr_k} \frac{2}{r}\int_{\partial B_1} |\nabla u_r\cdot x - u_r|^2d\cH^{n-1}dr &= \int_S^R \frac{2}{r}\int_{\partial B_1} |\nabla u_{r_kr}\cdot x - u_{r_kr}|^2d\cH^{n-1}dr \\
                                                                        &= \int_S^R \frac{2}{r^n}\int_{\partial B_r} \left|\nabla u_{r_k}\cdot\frac{x}{r} - \frac{u_{r_k}}{r}\right|^2d\cH^{n-1}dr \\
                                                                        &= \int_{B_R \setminus B_S} 2|x|^{-n-2}|\nabla u_{r_k}\cdot x - u_{r_k}|^2dx.
  \end{align*}
  Inserting this back into the equation yields
  \[
    W_Q(u_{Rr_k}) - W_Q(u_{Sr_k})
    = \int_{B_R \setminus B_S} 2|x|^{-n-2}|\nabla u_{r_k}\cdot x - u_{r_k}|^2dx
    - C(Rr_k - Sr_k).
  \]
  Thus, letting \(r_k \to 0\) and using the fact that \(u_{r_k} \to u_0\) in
  \(H^1(B_R)\) we find that
  \[
    \int_{B_R \setminus B_S} 2|x|^{-n-2}|\nabla u_0\cdot x - u_0|^2dx = 0.
  \]
  It follows that \(\nabla u_0\cdot x - u_0 = 0\) a.e. in
  \(B_R \setminus B_S\). Since this holds for any \(0 < S < R < \infty\), we conclude that
  \(\nabla u_0\cdot x - u_0 = 0\) a.e. in \(\R^n\) and therefore \(u_0\) is
  1-homogeneous as claimed.
\end{proof}

\section{Proof of Theorem \ref{thm:1}}
\label{sec:lip-implies-smooth}

At this point we have all the main ingredients for the proof of Theorem \ref{thm:1}.
As already explained in the introduction, the last part of the proof can be carried
out either by classifying the blow-up limits at free boundary points, or by
leveraging the known regularity of free boundaries of viscosity solutions. However,
before we start with this last step we prove an auxiliary result concerning the
definition of weak solution in the case of Lipschitz domains.

Generally, one would expect a weak notion of solution to only ask for condition
\autoref{item:3} in Definition \ref{defn:1}. Without any additional assumptions, none
of the three conditions in Definition \ref{defn:1} can be used to deduce the other.
Nevertheless, in the case of Lipschitz domains we can prove that \autoref{item:3} is
a sufficient condition to be a weak solution.
\begin{lemma}
  \label{lem:5}
  Let \(\Omega \subset \R^n\) be a bounded Lipschitz domain such that
  \(0 \in \partial \Omega\). Assume \(u \in H^1_{\mathrm{loc}}(B_1)\) is a non-negative function such
  that \(u = 0\) in \(B_1 \setminus \Omega\) and
  \begin{equation}
    \label{eq:lip-weak-form}
    -\int_{B_1} \nabla u\cdot\nabla\eta = \int_{\partial \Omega \cap B_1} Q\eta d\cH^{n-1} - \int_{\Omega \cap B_1} f\eta,
    \qquad \forall\eta \in C^{\infty}_{\mathrm{c}}(B_1).
  \end{equation}
  Then \(u\) is a weak solution in \(B_1\) in the sense of Definition \ref{defn:1}.
  Moreover, \(\{u > 0\} = \Omega \cap B_1\).
\end{lemma}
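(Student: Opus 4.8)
The plan is to verify the three defining conditions of a weak solution (Definition~\ref{defn:1}) and, along the way, to identify $\{u>0\}$ with $\Omega\cap B_1$. It is convenient to first rewrite \eqref{eq:lip-weak-form} as the distributional identity $\Delta u = \mu$ in $B_1$, where $\mu := Q\,\cH^{n-1}\mres(\partial\Omega\cap B_1) - f\chi_{\Omega\cap B_1}$ is a signed Radon measure with $|\mu|\le Q_{\max}\,\cH^{n-1}\mres\partial\Omega + \|f\|_{L^\infty}\,dx$. Since $\partial\Omega$ is Lipschitz it is Ahlfors $(n-1)$-regular, $c_\Omega\rho^{n-1}\le\cH^{n-1}(\partial\Omega\cap B_\rho(y))\le C_\Omega\rho^{n-1}$ for $y\in\partial\Omega$ and $\rho$ small; a standard computation then shows that the Newtonian potential of $\cH^{n-1}\mres\partial\Omega$ (restricted to any fixed ball) is continuous, and writing $u$ locally as this potential plus a harmonic function, via the Riesz decomposition, yields $u\in C(B_1)$. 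Restricting \eqref{eq:lip-weak-form} to test functions supported in the open set $\Omega\cap B_1$ removes the boundary term, so $-\Delta u = f$ there and, by interior elliptic regularity, $u\in C^\infty(\Omega\cap B_1)$. Thus condition~\autoref{item:1} will follow once we show $\{u>0\}=\Omega\cap B_1$.

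To prove $\{u>0\}=\Omega\cap B_1$, only the inclusion $\supseteq$ needs work. On each connected component of $\Omega\cap B_1$ the function $u$ is a nonnegative classical supersolution of $-\Delta u = f\ge 0$, so by the strong minimum principle it is either strictly positive on that component or vanishes identically on it. Suppose it vanished on a component $D$ and fix $x_0\in\partial D\cap B_1$; one checks $x_0\in\partial\Omega$. Because $\Omega$ is Lipschitz, $B_\rho(x_0)\cap\Omega$ is connected for $\rho$ small, hence contained in $D$, so $u\equiv 0$ on all of $B_\rho(x_0)$. Plugging test functions supported in $B_\rho(x_0)$ into \eqref{eq:lip-weak-form} then forces $Q\,\cH^{n-1}\mres\partial\Omega = f\chi_\Omega$ as measures on $B_\rho(x_0)$; this is impossible, since the left-hand side is a nonzero measure (as $Q\ge Q_{\min}>0$ and $\cH^{n-1}(\partial\Omega\cap B_\rho(x_0))>0$) which is singular with respect to Lebesgue measure, while the right-hand side is absolutely continuous. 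Hence no such component exists, so $\{u>0\}=\Omega\cap B_1$; in particular $\partial\{u>0\}\cap B_1 = \partial\Omega\cap B_1$ and $\chi_{\{u>0\}}=\chi_\Omega$ on $B_1$.

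The core of the proof is condition~\autoref{item:2}, and the key is a mean-value representation at free boundary points. Fix an open set $D\subset\subset B_1$, a point $x_0\in\partial\{u>0\}\cap D = \partial\Omega\cap D$, and a ball $B_\rho(x_0)\subset D$; since $u$ is continuous, $u(x_0)=0$. Writing $u = h + G_{B_\rho(x_0)}[\mu]$ in $B_\rho(x_0)$, with $h$ harmonic and $h=u$ on $\partial B_\rho(x_0)$ and $G_{B_\rho(x_0)}$ the (negative) Green function, and evaluating at the centre $x_0$, we obtain
\[
  \fint_{\partial B_\rho(x_0)} u\, d\cH^{n-1}
  = -\int_{\partial\Omega\cap B_\rho(x_0)} G_{B_\rho(x_0)}(x_0,z)\,Q(z)\, d\cH^{n-1}(z)
  + \int_{\Omega\cap B_\rho(x_0)} G_{B_\rho(x_0)}(x_0,z)\, f(z)\, dz .
\]
The second term has size $O(\rho^2)$ and is $\le 0$ because $G_{B_\rho(x_0)}\le 0$ and $f\ge 0$, so it only helps the upper bound; the first term is nonnegative, and using the two-sided comparison $c_n|x_0-z|^{2-n}\le|G_{B_\rho(x_0)}(x_0,z)|\le C_n|x_0-z|^{2-n}$ for $|x_0-z|\le\rho/2$ (with a logarithm replacing the power when $n=2$) together with the upper and lower Ahlfors bounds on $\cH^{n-1}\mres\partial\Omega$, it is comparable to $\rho$. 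Hence for $\rho$ small, depending only on $n$, the Lipschitz character of $\Omega$, $Q_{\min}$, $Q_{\max}$ and $\|f\|_{L^\infty}$, we get $0<c_D\le\frac1\rho\fint_{\partial B_\rho(x_0)}u\,d\cH^{n-1}\le C_D<\infty$, which is exactly condition~\autoref{item:2}. (By Proposition~\ref{prop:1}, $u$ is then also locally Lipschitz in $B_1$.)

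Finally, condition~\autoref{item:3} is immediate: since $\{u>0\}=\Omega\cap B_1$ we may replace $\chi_\Omega$ by $\chi_{\{u>0\}}$ in \eqref{eq:lip-weak-form}, and since a Lipschitz domain has locally finite perimeter with $\cH^{n-1}(\partial\Omega\setminus\partial_{\mathrm{red}}\Omega)=0$, the surface integral in \eqref{eq:lip-weak-form} equals $\int_{\partial_{\mathrm{red}}\{u>0\}\cap B_1}Q\eta\,d\cH^{n-1}$. Together with~\autoref{item:1} and~\autoref{item:2} this shows $u$ is a weak solution, and we have already identified $\{u>0\}=\Omega\cap B_1$. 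I expect the two main obstacles to be: (i) the identification $\{u>0\}=\Omega\cap B_1$, specifically the exclusion of components on which $u$ vanishes, where one must exploit that a nonzero surface measure cannot be absolutely continuous; and (ii) making the Green-function estimates in condition~\autoref{item:2} precise, especially the \emph{lower} bound, which is exactly where the positivity $Q\ge Q_{\min}>0$ and the non-degeneracy of $\cH^{n-1}\mres\partial\Omega$ at boundary points — valid for Lipschitz $\Omega$ but failing for rougher domains — enter in an essential way.
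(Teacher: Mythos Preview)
Your proof is correct and in places more careful than the paper's. The main methodological difference is in condition~\autoref{item:2}: the paper tests against the torsion function $\zeta(y)=\tfrac{1}{2n}(r^2-|y-x_0|^2)$, obtaining
\[
\fint_{\partial B_r(x_0)} u = \fint_{B_r(x_0)} u + \frac{1}{|B_r|}\int_{\partial\Omega\cap B_r} Q\zeta\,d\cH^{n-1} - \frac{1}{|B_r|}\int_{\Omega\cap B_r} f\zeta,
\]
and the unwanted solid-average term forces a bootstrap for the upper bound: first invoke boundary Schauder estimates on the Lipschitz domain to get $u\in C^{0,\alpha}$, then iterate an inequality for $r^{-\alpha}\fint_{\partial B_r}u$ to upgrade $r^\alpha$ growth to linear growth. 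Your Green-function representation at the pole $x_0$ avoids this, since $u(x_0)=0$ kills the solid average and the dyadic estimate $\int_{\partial\Omega\cap B_\rho(x_0)}|x_0-z|^{2-n}\,d\cH^{n-1}\le C\rho$ (from upper Ahlfors regularity) gives the linear upper bound directly; the lower bound is obtained the same way in both arguments. Two further points: your treatment of $\{u>0\}=\Omega\cap B_1$ is more complete, as the paper's bare appeal to the strong minimum principle does not by itself rule out a component of $\Omega\cap B_1$ on which $u\equiv 0$, whereas your singular-vs.-absolutely-continuous contradiction does; and you get $u\in C(B_1)$ via the Riesz decomposition rather than boundary H\"older regularity. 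One thing to tidy: condition~\autoref{item:2} is stated for \emph{all} $B_r(x_0)\subset D$, not only small $r$; your upper bound already works at every scale, and the lower bound for $r$ bounded away from zero follows by a routine continuity/compactness argument once local Lipschitz regularity is in hand.
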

\begin{proof}
  Taking \(\eta \in C^{\infty}_{\mathrm{c}}(\Omega \cap B_1)\) in \eqref{eq:lip-weak-form} we see that
  \[
    \int_{\Omega \cap B_1} \nabla u\cdot\nabla\eta = \int_{\Omega \cap B_1} f\eta, \qquad \forall\eta \in C^{\infty}_{\mathrm{c}}(\Omega \cap B_1).
  \]
  Hence, \(u\) solves \(-\Delta u = f\) in \(\Omega \cap B_1\) in the weak sense. By the
  smoothness of \(f\) and the fact that \(\Omega\) is Lipschitz we deduce that
  \(u \in C(\overline{\Omega \cap B_1}) \cap C^{\infty}(\Omega \cap B_1)\). In particular,
  \(u\) solves \(-\Delta u = f\) in the classical sense in \(\Omega \cap B_1\). Also, since
  \(u = 0\) on \(\partial\Omega \cap B_1\), we infer that \(u \in C(B_1)\). Now, the fact that
  \(-\Delta u = f \geq 0\) in \(\Omega \cap B_1\) implies that \(u\) is superharmonic in
  \(\Omega \cap B_1\), and by the non-negativity of \(u\) we conclude that \(u > 0\) in
  \(\Omega \cap B_1\).

  We have just proved that \(\{u > 0\} \cap B_1 = \Omega \cap B_1\) and therefore
  \(\partial\{u > 0\} \cap B_1 = \partial\Omega \cap B_1\) as well. Thus, we can rewrite
  \eqref{eq:lip-weak-form} as
  \begin{equation}
    \label{eq:lip-weak-form-rewrite}
    -\int_{B_1} \nabla u\cdot\nabla\eta = \int_{\partial\{u > 0\} \cap B_1} Q\eta d\cH^{n-1} - \int_{\{u > 0\}} f\eta,
    \qquad \forall\eta \in C^{\infty}_{\mathrm{c}}(B_1).
  \end{equation}
  Since \(\Omega\) is Lipschitz, \(\partial_{\mathrm{red}}\Omega = \partial \Omega\)
  \(\cH^{n-1}\)-a.e. and the equation above implies that \(u\) satisfies
  \autoref{item:3}. Also, \(u \in C(B_1)\), \(u \geq 0\) and \(-\Delta u = f\) in
  \(\Omega \cap B_1 = \{u > 0\} \cap B_1\). In other words, \(u\) also fulfills condition
  \autoref{item:1}.

  It remains to check that \(u\) satisfies \autoref{item:2}. Let
  \(D \subset\subset B_1\) be an open set and choose \(B_r(x) \subset D\) with
  \(x \in \partial\{u > 0\}\). By density, \eqref{eq:lip-weak-form-rewrite} holds for
  \(\eta \in H^1_0(B_1)\) as well. Define
  \[
    \zeta(y) \coloneqq \frac{1}{2n}(r^2 - |y-x|^2) \in H^1_0(B_1).
  \]
  Then
  \begin{align*}
    -\int_{B_r(x)} \nabla u\cdot\nabla\zeta &= -\int_{\partial B_r(x)} u\nabla\zeta\cdot\nu d\cH^{n-1} + \int_{B_r(x)} u\Delta\zeta \\
                       &= \frac{r}{n}\int_{\partial B_r(x)} u d\cH^{n-1} - \int_{B_r(x)} u,
  \end{align*}
  and by \eqref{eq:lip-weak-form-rewrite}
  \begin{equation}
    \label{eq:boundary-int-identity}
    \begin{aligned}
      \fint_{\partial B_r(x)} u d\cH^{n-1} &= \fint_{B_r(x)} u - \frac{1}{|B_r|}\int_{B_r(x)} \nabla u\cdot\nabla\zeta \\
                                    &= \fint_{B_r(x)} u
                                      + \frac{1}{|B_r|}\int_{\partial\{u > 0\} \cap B_r(x)} Q\zeta d\cH^{n-1}
                                      - \frac{1}{|B_r|}\int_{\{u > 0\} \cap B_r(x)} f\zeta.
    \end{aligned}
  \end{equation}
  Observe that since \(\partial\{u > 0\} = \partial \Omega \cap B_1\) is Lipschitz,
  \begin{equation}
    \label{eq:lipschitz-density}
    cr^{n-1} \leq \cH^{n-1}(\partial\{u > 0\} \cap B_r(x)) \leq Cr^{n-1}
  \end{equation}
  for some constants \(c,C > 0\) depending on \(D\). Thus,
  \eqref{eq:boundary-int-identity} yields
  \begin{align*}
    \fint_{\partial B_r(x)} u d\cH^{n-1} &\geq \frac{Q_{\mathrm{min}}}{|B_r|}\int_{\partial\{u > 0\} \cap B_{r/2}(x)} \zeta d\cH^{n-1}
                                    - \frac{\|f\|_{L^{\infty}(D)}}{|B_r|}\int_{\{u > 0\} \cap B_r(x)} \zeta \\
                                  &\geq c_nQ_{\mathrm{min}}\frac{r^2}{|B_r|}\cH^{n-1}(\partial\{u > 0\} \cap B_{r/2}(x))
                                    - c_n\|f\|_{\infty}r^2 \\
                                  &\geq cr - c_n\|f\|_{\infty}r^2.
  \end{align*}
  It follows that if \(r\) is smaller than some constant \(c_0\) depending on \(n\),
  \(Q_{\mathrm{min}}\) and \(\|f\|_{L^{\infty}(D)}\), then
  \[
    \fint_{\partial B_r(x)} u d\cH^{n-1} \geq cr.
  \]
  Hence, for \(r \leq c_0\) the lower bound in \autoref{item:2} holds. Given that for
  \(r \geq c_0\),
  \[
    \frac{1}{r}\fint_{\partial B_r(x)} u d\cH^{n-1}
    \geq \left( \frac{c_0}{r} \right)^{n+1}\frac{1}{c_0}\fint_{\partial B_{c_0}(x)} u
    d\cH^{n-1}
    \geq \left( \frac{c_0}{\diam(D)} \right)^{n+1}c,
  \]
  we conclude that \(u\) satisfies the lower bound in \autoref{item:2} for any ball
  \(B_r(x) \subset D\).

  Lastly, let us prove the upper bound of condition \autoref{item:2}. First, since
  \(\{u > 0\} = \Omega \cap B_1\) and \(\Omega\) is a Lipschitz domain, by Schauder estimates
  \(u \in C^{0,\alpha}(\overline{\{u > 0\}})\) for some small \(0 < \alpha < 1\). Therefore
  \(u \in C^{0,\alpha}(B_1)\). Now define
  \[
    \phi(r) \coloneqq \frac{1}{r^{\alpha}}\fint_{\partial B_r(x)} u d\cH^{n-1}.
  \]
  By the H\"older continuity of \(u\) we know that \(\phi\) is bounded. Moreover, we can
  rewrite \eqref{eq:boundary-int-identity} in terms of \(\phi\) as
  \begin{align*}
    \phi(r) &= \frac{n}{r^{n+\sigma}}\int_0^r s^{n+\alpha-1}\phi(s)ds
           + \frac{1}{r^{\alpha}|B_r|}\int_{\partial\{u > 0\} \cap B_r(x)} Q\zeta d\cH^{n-1}
           - \frac{1}{r^{\alpha}|B_r|}\int_{\{u > 0\} \cap B_r(x)} f\zeta \\
         &\leq \frac{n}{n+\alpha}\sup_{(0,r)} \phi
           + C_nQ_{\mathrm{max}}\frac{r^2}{r^{\alpha}|B_r|}\cH^{n-1}(\partial\{u > 0\} \cap B_r(x))
           + C_n\|f\|_{L^{\infty}(D)}\frac{r^2}{r^{\alpha}} \\
         &\leq \frac{n}{n+\alpha}\sup_{(0,r)} \phi + Cr^{1-\alpha},
  \end{align*}
  where we have used \eqref{eq:lipschitz-density} to bound the second term, and the
  fact that \(r \leq \diam(D)\) to estimate the last term. Consequently,
  \[
    \left( 1 - \frac{n}{n+\alpha} \right)\sup_{(0,r)} \phi \leq Cr^{1-\alpha},
  \]
  or equivalently,
  \[
    \sup_{(0,r)} \phi \leq C\frac{n+\alpha}{\alpha}r^{1-\alpha}.
  \]
  Hence,
  \[
    \fint_{\partial B_r(x)} u d\cH^{n-1} \leq Cr
  \]
  and \(u\) fulfills condition \autoref{item:2}.
\end{proof}

\subsection{Lipschitz cones}

The aim of this section is to prove Theorem \ref{thm:1} by means of classifying the
blow-up limits at free boundary points of a weak solution with Lipschitz free
boundary. Observe that by Corollary \ref{cor:3} such blow-ups are weak solutions of
the classical one-phase problem with constant boundary condition. Therefore,
throughout this subsection we will work in the particular case \(f \equiv 0\),
\(Q \equiv \text{const}\).
\begin{remark}
  \label{rem:6}
  By \cite[Remark 4.2]{MR618549}, if a function \(u\) satisfies condition
  \autoref{item:1} of Definition \ref{defn:1} with \(f \equiv 0\), then its Laplacian is
  non-negative in the distributional sense. Thus, \(\lambda = \Delta u\) is a (positive) Radon
  measure such that
  \[
    -\int_{\Omega} \nabla u\cdot\nabla\varphi dx = \int_{\Omega} \varphi d\lambda, \quad \forall\varphi \in C^{\infty}_{\mathrm{c}}(\Omega).
  \]
  Moreover, by \cite[Theorem 4.3]{MR618549} the following two conditions are
  equivalent:
  \begin{itemize}
  \item For any open set \(D \subset\subset B_1\) there exist constants
    \(0 < c_D \le C_D < \infty\) such that for any ball \(B_r(x) \subset D\) with
    \(x \in \partial\{u > 0\}\) we have
    \[
      c_D \le \frac{1}{r}\fint_{\partial B_r(x)} u d\cH^{n-1} \le C_D.
    \]
  \item For any open set \(D \subset\subset B_1\) there exist constants
    \(0 < c_D \le C_D < \infty\) such that for any ball \(B_r(x) \subset D\) with
    \(x \in \partial\{u > 0\}\) we have
    \[
      c_Dr^{n-1} \le \int_{B_r(x)} d\lambda \le C_Dr^{n-1}.
    \]
  \end{itemize}
\end{remark}
\begin{proposition}
  \label{prop:8}
  Let \(Q_0 > 0\) be a positive constant and let \(u:\R^n \rightarrow \R\) be a function which
  is constant in the \(e_n\) direction, that is,
  \[
    u(x_1,\ldots,x_n) = v(x_1,\ldots,x_{n-1}).
  \]
  Then \(u\) is a weak solution in \(\R^n\) for \(f \equiv 0\) and \(Q \equiv Q_0\) if and only
  if \(v\) is a weak solution in \(\R^{n-1}\) for \(f \equiv 0\) and \(Q \equiv Q_0\).
\end{proposition}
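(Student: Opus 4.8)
The plan is to verify the three conditions of Definition \ref{defn:1} one at a time and in both directions, exploiting that — writing $y=(y',y_n)\in\R^{n-1}\times\R$ — the relation $u(y)=v(y')$ forces $\partial_n u\equiv 0$, $\{u>0\}=\{v>0\}\times\R$, $\nabla u=(\nabla v,0)$, and, as distributions, $\Delta u=\Delta v\otimes\mathcal{L}^1$ (equivalently $\langle\Delta u,\eta\rangle=\langle\Delta v,\bar\eta\rangle$ for $\eta\in C^\infty_{\mathrm c}(\R^n)$, where $\bar\eta(y')=\int_\R\eta(y',y_n)\,dy_n$).

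First I would handle condition \autoref{item:1}: since $u$ does not depend on $y_n$, we have $u\in C(\R^n)$ iff $v\in C(\R^{n-1})$ and $u\ge0$ iff $v\ge0$, while $\{u>0\}$ is open iff $\{v>0\}$ is and there $\Delta u=\Delta v$ pointwise, so $\Delta u=0$ in $\{u>0\}$ iff $\Delta v=0$ in $\{v>0\}$. For condition \autoref{item:2} I would not argue directly with spherical averages but rather invoke Remark \ref{rem:6}, which — once \autoref{item:1} is known — recasts \autoref{item:2} (in each ambient dimension $m$, i.e. $m=n$ for $u$ and $m=n-1$ for $v$) as the two-sided bound $c_D\,r^{m-1}\le\lambda(B_r)\le C_D\,r^{m-1}$ on the Riesz measure $\lambda$, over balls $B_r\subset D$ centred on the free boundary. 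Writing $\lambda=\Delta u$ on $\R^n$ and $\mu=\Delta v$ on $\R^{n-1}$, the product structure $\lambda=\mu\otimes\mathcal{L}^1$ gives, by Fubini, for $x=(x',x_n)$ with $x'\in\partial\{v>0\}$,
\[
  \lambda(B_r(x))=\int_{B'_r(x')}2\sqrt{r^2-|y'-x'|^2}\,d\mu(y'),
\]
whence $\sqrt3\,r\,\mu(B'_{r/2}(x'))\le\lambda(B_r(x))\le 2r\,\mu(B'_r(x'))$. Applying these inequalities with suitably rescaled radii (and enlarging or shrinking the reference domain) shows that the measure bound for $\mu$ in $\R^{n-1}$ holds iff the one for $\lambda$ in $\R^n$ does, so \autoref{item:2} transfers in both directions.

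For condition \autoref{item:3} I would first observe that, with \autoref{item:1}--\autoref{item:2} in hand, Remark \ref{rem:2} gives local finiteness of perimeter of the positivity sets; since $\{u>0\}=\{v>0\}\times\R$, standard product formulas then yield $\partial_{\mathrm{red}}\{u>0\}=\partial_{\mathrm{red}}\{v>0\}\times\R$, the interior normal of $\{u>0\}$ at $(y',y_n)$ being $(\nu_v(y'),0)$, while the surface measure $\cH^{n-1}$ on $\partial_{\mathrm{red}}\{v>0\}\times\R$ is the product of $\cH^{n-2}$ on $\partial_{\mathrm{red}}\{v>0\}$ with $\mathcal{L}^1$. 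For the implication $v\Rightarrow u$: given $\eta\in C^\infty_{\mathrm c}(\R^n)$, use $\nabla u\cdot\nabla\eta=\nabla v(y')\cdot\nabla'\eta(y',y_n)$ (with $\nabla'$ the gradient in the $y'$ variables), apply condition \autoref{item:3} for $v$ to the slice test function $\zeta=\eta(\cdot,y_n)\in C^\infty_{\mathrm c}(\R^{n-1})$ for each fixed $y_n$, and integrate the resulting identity over $y_n\in\R$; the product formula converts the right-hand side into $\int_{\partial_{\mathrm{red}}\{u>0\}}Q_0\,\eta\,d\cH^{n-1}$, which is condition \autoref{item:3} for $u$ with $f\equiv 0$, $Q\equiv Q_0$. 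For $u\Rightarrow v$: given $\zeta\in C^\infty_{\mathrm c}(\R^{n-1})$, fix $\psi\in C^\infty_{\mathrm c}(\R)$ with $\int_\R\psi=1$ and test condition \autoref{item:3} for $u$ against $\eta(y',y_n)=\zeta(y')\psi(y_n)$; both sides factor as an integral over $\R^{n-1}$ times $\int_\R\psi=1$, yielding condition \autoref{item:3} for $v$.

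This is a separation-of-variables argument, and there is no circularity: in each direction conditions \autoref{item:1} and \autoref{item:2} are checked first, so the finite-perimeter statement needed in the treatment of \autoref{item:3} (via Remark \ref{rem:2}) is already available for both $u$ and $v$. I expect the main obstacle to be purely the measure-theoretic bookkeeping — cleanly passing from the non-degeneracy/regularity condition \autoref{item:2} to a Riesz-measure bound via Remark \ref{rem:6} so that the product identity $\Delta u=\Delta v\otimes\mathcal{L}^1$ can be used directly, and verifying the product formulas for perimeter, reduced boundary, normals and Hausdorff measure on the cylinder $\partial_{\mathrm{red}}\{v>0\}\times\R$ that enter condition \autoref{item:3} — rather than any conceptual difficulty.
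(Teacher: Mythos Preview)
The proposal is correct and follows essentially the same approach as the paper: condition \autoref{item:1} is immediate, condition \autoref{item:2} is transferred via Remark \ref{rem:6} by sandwiching $\lambda(B_r)$ between $r\cdot\mu(B'_{cr})$ for suitable constants $c$ (you compute $\lambda(B_r)$ directly by Fubini, the paper uses mollified indicators, but the resulting two-sided bounds are the same), and condition \autoref{item:3} is handled by the same tensor-product test functions $\zeta(y')\psi(y_n)$ with $\int\psi=1$ for $u\Rightarrow v$ and by the Fubini/slice argument for $v\Rightarrow u$.
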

\begin{proof}
  We use the notation \(x = (x',x_n)\) for points in \(\R^n\) and \(B_r'\) for balls
  in \(\R^{n-1}\). In this notation we have that \(u(x) = v(x')\) and
  \(\nabla u(x) = (\nabla v(x'),0)\). Since \(u(x) = v(x')\), it is immediate to see that
  condition \autoref{item:1} holds for \(u\) if and only if it holds for \(v\).

  Let us now check condition \autoref{item:2}. If \(u\) is a weak solution, then both
  \(u\) and \(v\) satisfy \autoref{item:1}, and by Remark \ref{rem:6} it is enough to
  prove that \(v\) satisfies the following: for every bounded open set
  \(D' \subset \R^{n-1}\) and every ball \(B_r'(x') \subset D'\) with \(x' \in \partial\{v > 0\}\),
  \[
    c_{D'}r^{n-2} \le \int_{B_r'(x')} d\lambda' \le C_{D'}r^{n-2},
  \]
  where \(\lambda'\) is the Radon measure given by the distributional Laplacian of
  \(v\). Observe that for any \(\varphi \in C^{\infty}_{\mathrm{c}}(\R^n)\),
  \begin{equation}
    \label{eq:7}
    \begin{split}
      \int_{\R^n} \nabla u(x)\cdot\nabla\varphi(x)dx &= \sum_{i=1}^n\int_{\R^n} \partial_iv(x')\partial_i\varphi(x)dx \\
                              &= \sum_{i=1}^n\int_{\R^{n-1}} \partial_iv(x') \int_{\R} \partial_i\varphi(x)dx_ndx' \\
                              &= \sum_{i=1}^n\int_{\R^{n-1}} \partial_iv(x') \partial_i\left( \int_{\R} \varphi(x',x_n)dx_n \right)dx' \\
                              &= \int_{\R^{n-1}} \nabla v(x')\cdot\nabla\psi(x')dx'
    \end{split}
  \end{equation}
  where \(\psi(x') \coloneqq \int_{\R} \varphi(x',x_n)dx_n\). Therefore, since
  \(\psi \in C^{\infty}_{\mathrm{c}}(\R^{n-1})\),
  \[
    \int_{\R^n} \varphi d\lambda =
    -\int_{\R^n} \nabla u\cdot\nabla\varphi dx =
    -\int_{\R^{n-1}} \nabla v\cdot\nabla \psi dx' =
    \int_{\R^{n-1}} \psi d\lambda'
  \]
  and we see that
  \[
    \int_{\R^n} \varphi d\lambda =
    \int_{\R^{n-1}} \psi d\lambda' =
    \int_{\R^{n-1}}\int_{\R} \varphi(x',x_n) dx_nd\lambda'(x')
  \]
  for every \(\varphi \in C^{\infty}_{\mathrm{c}}(\R^n)\). Now, given any ball
  \(B_r(x) \subset \R^n\) we can take \(\varphi\) such that
  \(\chi_{B_r(x)} \le \varphi \le \chi_{B_{2r}(x)}\) and obtain that
  \begin{align*}
    \int_{B_r(x)} d\lambda &\le \int_{\R^n} \varphi d\lambda = \int_{\R^{n-1}}\int_{\R} \varphi dy_nd\lambda'(y') \le \int_{\R^{n-1}}\int_{\R} \chi_{B_{2r}(x)} dy_nd\lambda'(y') \\
                  &\le \int_{\R^{n-1}}\int_{x_n-2r}^{x_n+2r} \chi_{B_{2r}'(x')} dy_nd\lambda'(y') = 4r\int_{B_{2r}'(x')} d\lambda'.
  \end{align*}
  Similarly, taking \(\varphi\) such that \(\chi_{B_{r/2}(x)} \le \varphi \le \chi_{B_r(x)}\) we get
  \[
    r\int_{B_{r/2}'(x')} d\lambda' \le \int_{B_r(x)} d\lambda.
  \]
  Combining the two estimates we see that for any ball \(B_r(x) \subset \R^n\),
  \begin{equation}
    \label{eq:8}
    r\int_{B_{r/2}'(x')} d\lambda' \le \int_{B_r(x)} d\lambda \le 4r\int_{B_{2r}'(x')} d\lambda'.
  \end{equation}

  Let \(D' \subset \R^{n-1}\) be a bounded open set and let \(D \subset \R^n\) be a
  bounded open set large enough so that \(B_{r/2}((x',0)) \subset D\) and
  \(B_{2r}((x',0)) \subset D\) for all \(B_r'(x') \subset D'\). Since \(u\) is a weak
  solution by hypothesis, there exist constants \(c_D,C_D\) such that
  \begin{equation}
    \label{eq:9}
    c_Dr^{n-1} \le \int_{B_r(x)} d\lambda \le C_Dr^{n-1},
  \end{equation}
  for any \(B_r(x) \subset D\) with \(x \in \partial\{u > 0\}\). Hence, combining \eqref{eq:8} and
  \eqref{eq:9} we conclude that
  \[
    c_{D'}r^{n-1} \le \int_{B_r'(x')} d\lambda' \le C_{D'}r^{n-1}
  \]
  for any \(B_r'(x') \subset D'\) with \(x' \in \partial\{v > 0\}\), that is, \(v\) satisfies
  condition \autoref{item:2}. Conversely, if \(v\) is a weak solution, then once
  again both \(u\) and \(v\) fulfill \autoref{item:1}. Therefore we can repeat the
  previous argument (with some obvious modifications to the domains \(D\) and \(D'\))
  to conclude that \(u\) satisfies \autoref{item:2}.

  Lastly, let us check condition \autoref{item:3}. Assume \(u\) is a weak solution in
  \(\R^n\). Given \(\psi \in C^{\infty}_{\mathrm{c}}(\R^{n-1})\), fix a function
  \(\eta \in C^{\infty}_{\mathrm{c}}(\R)\) such that \(\int \eta = 1\) and define
  \[
    \varphi(x) \coloneqq \psi(x')\eta(x_{n+1}) \in C^{\infty}_{\mathrm{c}}(\R^n).
  \]
  Then \(\nabla\varphi(x) = (\nabla\psi(x')\eta(x_n),\psi(x')\eta'(x_n))\) and therefore
  \begin{align*}
    \int_{\R^n} \nabla u(x)\cdot\nabla\varphi(x)dx &= \int_{\R^n} \eta(x_n)\nabla v(x')\cdot\nabla\psi(x')dx \\
                            &= \int_{\R^{n-1}} \nabla v(x')\cdot\nabla\psi(x') \int_{\R} \eta(x_n)dx_ndx' \\
                            &= \int_{\R^{n-1}} \nabla v(x')\cdot\nabla\psi(x')dx'.
  \end{align*}
  Since
  \(\partial_{\mathrm{red}}\{u > 0\} \cap \{x_n = t\} = \partial_{\mathrm{red}}\{v > 0\} \times
  \{t\}\) for all \(t \in \R\), we also have
  \begin{align*}
    \int_{\partial_{\mathrm{red}}\{u > 0\}} \varphi(x)d\cH^{n-1}(x) &= \int_{\R}\int_{\partial_{\mathrm{red}}\{u > 0\} \cap \{x_n = t\}} \psi(x')\eta(t)d\cH^{n-2}(x')dt \\
                                                    &= \int_{\R} \eta(t) \int_{\partial_{\mathrm{red}}\{v > 0\}} \psi(x')d\cH^{n-2}(x')dt \\
                                                    &= \int_{\partial_{\mathrm{red}}\{v > 0\}} \psi(x')d\cH^{n-2}.
  \end{align*}
  Thus, using that \(u\) is a weak solution we conclude that
  \[
    -\int_{\R^{n-1}} \nabla v\cdot\nabla\psi dx' =
    -\int_{\R^n} \nabla u\cdot\nabla\varphi dx =
    Q_0\int_{\partial_{\mathrm{red}}\{u > 0\}} \varphi d\cH^{n-1} =
    Q_0\int_{\partial_{\mathrm{red}}\{v > 0\}} \psi d\cH^{n-2}.
  \]
  For the converse, assume \(v\) is a weak solution in \(\R^{n-1}\). By
  \eqref{eq:7}, for any function \(\varphi \in C^{\infty}_{\mathrm{c}}(\R^n)\) we have
  \[
    \int_{\R^n} \nabla u(x)\cdot\nabla\varphi(x)dx = \int_{\R^{n-1}} \nabla v(x')\cdot\nabla\psi(x')dx',
  \]
  where \(\psi = \int_{\R} \varphi dx_n\). On the other hand,
  \begin{align*}
    \int_{\partial_{\mathrm{red}}\{u > 0\}} \varphi(x)d\cH^{n-1}(x) &= \int_{\R}\int_{\partial_{\mathrm{red}}\{u > 0\} \cap \{x_n = t\}} \varphi(x',t)d\cH^{n-2}(x')dt \\
                                                    &= \int_{\R}\int_{\partial_{\mathrm{red}}\{v > 0\}} \varphi(x',t)d\cH^{n-2}(x')dt \\
                                                    &= \int_{\partial_{\mathrm{red}}\{v > 0\}} \psi(x')d\cH^{n-2},
  \end{align*}
  and therefore
  \[
    -\int_{\R^n} \nabla u\cdot\nabla\varphi dx =
    -\int_{\R^{n-1}} \nabla v\cdot\nabla\psi dx' =
    Q_0\int_{\partial_{\mathrm{red}}\{v > 0\}} \psi d\cH^{n-2} =
    Q_0\int_{\partial_{\mathrm{red}}\{u > 0\}} \varphi d\cH^{n-1}.
  \]
\end{proof}
\begin{definition}
  We say that \(u:\R^n \to \R\) is a Lipschitz cone if \(u\) is a 1-homogeneous
  weak solution with Lipschitz free boundary, \(f \equiv 0\) and
  \(Q \equiv Q_0\) for some constant \(Q_0 > 0\). A Lipschitz cone is said to be
  trivial if it's free boundary is a hyperplane, that is, there exists a unit
  vector \(\nu \in \R^n\) such that \(u(x) = Q_0(x\cdot\nu)_+\).
\end{definition}
\begin{lemma}
  \label{lem:6}
  Let \(u\) be a Lipschitz cone in \(\R^n\) such that
  \(x_0 = e_n \in \partial\{u > 0\}\). Then, any blow-up sequence
  \[
    u_r(x) = \frac{1}{r}u(x_0 + rx)
  \]
  has a subsequence \(u_{r_k}\), \(r_k \rightarrow 0\) which converges locally uniformly
  to
  \[
    u_0(x_1,\ldots,x_n) = v(x_1,\ldots,x_{n-1}),
  \]
  with \(v\) a Lipschitz cone in \(\R^{n-1}\). Moreover, if \(x_0\) is a
  singular point for \(\partial\{u > 0\}\), then \(v\) is a non-trivial cone.
\end{lemma}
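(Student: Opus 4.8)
The plan is to produce the blow-up limit from the compactness already established, identify it as a cone one dimension lower by exploiting the $1$-homogeneity of $u$, and then transfer the Lipschitz regularity of the free boundary down a dimension. First I would apply Corollary~\ref{cor:3} to the Lipschitz cone $u$ at the free boundary point $x_0=e_n$: after passing to a subsequence $r_k\to 0$ this yields a function $u_0\in C^{0,1}_{\mathrm{loc}}(\R^n)$ which is a weak solution in $\R^n$ with $f\equiv 0$ and $Q\equiv Q_0$, with $u_{r_k}\to u_0$ in the sense of Lemma~\ref{lem:1}; in particular $u_{r_k}\to u_0$ locally uniformly, $\partial\{u_{r_k}>0\}\to\partial\{u_0>0\}$ locally in Hausdorff distance, and $0\in\partial\{u_0>0\}$ (since $x_0\in\partial\{u>0\}$). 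Moreover, by Proposition~\ref{prop:7} (applied in a large ball $B_R(e_n)$) the limit $u_0$ is $1$-homogeneous.

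Next I would show that $u_0$ is translation invariant in the $e_n$-direction, using the $1$-homogeneity of $u$. For $t\in\R$ fixed and $r$ small enough that $1+rt>0$, setting $\theta=\theta(r,t):=(1+rt)^{-1}$ one has the identity
\[
  u_r(x+te_n)=\frac1r u\bigl((1+rt)e_n+rx\bigr)=\frac{1+rt}{r}\,u\bigl(e_n+r\theta x\bigr)=\frac1\theta\,u_r(\theta x).
\]
Letting $r=r_k\to 0$, so that $\theta\to 1$, and using the local uniform convergence $u_{r_k}\to u_0$ together with the continuity of $u_0$, one gets $u_0(x+te_n)=u_0(x)$ for all $x\in\R^n$ and $t\in\R$. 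Hence $u_0(x_1,\dots,x_n)=v(x_1,\dots,x_{n-1})$ for some $v\colon\R^{n-1}\to\R$, and Proposition~\ref{prop:8} then shows that $v$ is a weak solution in $\R^{n-1}$ with $f\equiv 0$ and $Q\equiv Q_0$; the $1$-homogeneity of $u_0$ passes to $v$.

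The substantive part is to check that $\partial\{v>0\}$ is Lipschitz. Since $u$ is a Lipschitz cone, near $x_0$ the set $\partial\{u>0\}$ is the graph of an $L$-Lipschitz function over some hyperplane $\nu_0^{\perp}$; rescalings preserve the Lipschitz constant, so $\partial\{u_{r_k}>0\}$ is, inside balls $B_{\rho/r_k}$ whose radii tend to $\infty$, the graph over $\nu_0^{\perp}$ of an $L$-Lipschitz function vanishing at $0$, and by a standard compactness argument (with a diagonal extraction) together with the Hausdorff convergence of the free boundaries, $\partial\{u_0>0\}$ is globally such a graph. Equivalently, at every point of $\partial\{u_0>0\}$ there is an interior open cone with axis $\nu_0$ and fixed opening half-angle $\beta=\beta(L)>0$ contained in $\{u_0>0\}$, and an opposite cone with axis $-\nu_0$ contained in the interior $\{u_0=0\}^{\circ}$. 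Since $\{u_0>0\}=\{v>0\}\times\R$ and $\{u_0=0\}^{\circ}=\bigl(\R^{n-1}\setminus\overline{\{v>0\}}\bigr)\times\R$ are both $e_n$-invariant, and an $e_n$-invariant graph over $\nu_0^{\perp}$ forces $\nu_0\ne\pm e_n$, the projection $\mu_0$ of $\nu_0$ onto $\R^{n-1}=e_n^{\perp}$ is nonzero; projecting the two cones based at a point $(p',0)$ with $p'\in\partial\{v>0\}$ onto $\R^{n-1}$ yields opposite open cones of positive half-angle around $\pm\mu_0$, contained respectively in $\{v>0\}$ and in $\R^{n-1}\setminus\overline{\{v>0\}}$. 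Hence $\partial\{v>0\}$ satisfies a uniform two-sided cone condition and is therefore Lipschitz, so $v$ is a Lipschitz cone in $\R^{n-1}$. I expect this last dimension reduction — carrying the Lipschitz-graph structure through the blow-up and then through the projection onto $\R^{n-1}$ — to be the main obstacle; the remaining steps are either direct applications of the quoted results or the short scaling identity above.

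For the final assertion I would argue by contradiction: if $x_0$ is singular and $v$ is trivial, say $v(x')=Q_0(x'\cdot\mu)_+$ for a unit vector $\mu\in\R^{n-1}$, then $u_0(x)=Q_0\bigl(x\cdot(\mu,0)\bigr)_+$ is a half-plane blow-up of $u$ at $x_0$, so $x_0$ is a regular point by definition, and Corollary~\ref{cor:2} would force $\partial\{u>0\}$ to be a $C^{\infty}$ surface near $x_0$, contradicting the assumption that $x_0$ is singular. Thus $v$ must be a non-trivial Lipschitz cone.
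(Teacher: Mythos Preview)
Your proof is correct and follows essentially the same route as the paper: extract the blow-up limit via Corollary~\ref{cor:3} and Proposition~\ref{prop:7}, use the $1$-homogeneity of $u$ to show $u_0$ is $e_n$-invariant (your scaling identity is a minor rewriting of the paper's), descend to $v$ via Proposition~\ref{prop:8}, and argue the contrapositive for the last statement. The one place where you go beyond the paper is the verification that $\partial\{v>0\}$ is Lipschitz: the paper simply asserts this, while you supply the argument via the uniform two-sided cone condition and its projection to $\R^{n-1}$; your observation that $e_n$-invariance of the limiting Lipschitz graph forces $\nu_0\neq\pm e_n$, so that the projected axis $\mu_0$ is nonzero and the projected cones have positive opening, is exactly what is needed to close that step.
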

\begin{proof}
  We follow closely the proof of \cite[Lemma 5.4]{MR3353802}. Let \(Q_0 > 0\) be the
  boundary condition of \(u\). By Corollary \ref{cor:3} and Proposition \ref{prop:7},
  we can find a subsequence \(u_{r_k}\) with \(r_k \rightarrow 0\) converging locally uniformly
  to a 1-homogeneous weak solution \(u_0\) with the same constant boundary data
  \(Q_0\). Moreover, for any fixed \(t \in \R\), we can take \(r\) small enough so that
  \(1 + tr \ge 0\). Therefore, using that \(u\) is 1-homogeneous we see that
  \begin{align*}
    u_r(x) &= \frac{1}{r(1 + tr)}u((1 + tr)(x_0 + rx)) \\
           &= \frac{1}{1 + tr}u_r(tx_0 + (1 + tr)x),
  \end{align*}
  and letting \(r = r_k \rightarrow 0\) we obtain that
  \[
    u_0(x) = u_0(tx_0 + x), \quad\text{for all } t \in \R.
  \]
  Thus, \(u_0\) is constant in the \(x_0 = e_n\) direction. By Proposition
  \ref{prop:8} it follows that \(v(x') = u_0(x',x_n)\) is a weak weak solution
  in \(\R^{n-1}\) with \(f \equiv 0\), \(Q \equiv Q_0\) and Lipschitz free boundary.
  Moreover, \(v\) is 1-homogeneous since
  \[
    v(\lambda x') = u_0(\lambda x',x_{n+1}) = u_0(\lambda x',\lambda x_{n+1}) = \lambda u_0(x',x_{n+1}) = \lambda v(x')
  \]
  for any \(\lambda > 0\). Hence, \(v\) is a Lipschitz cone in \(\R^{n-1}\).

  For the final statement, assume \(v\) is a trivial cone, that is,
  \(v(x') = Q_0(x'\cdot\nu')_+\) for some unit vector \(\nu' \in \R^{n-1}\). Then
  \[
    u_0(x) = v(x') = Q_0(x'\cdot\nu')_+ = Q_0(x\cdot\nu)_+
  \]
  where \(\nu \coloneqq (\nu',0)\). Thus, \(u_0\) is a trivial cone in
  \(\R^n\) and therefore \(x_0\) is a regular point.
\end{proof}
\begin{lemma}
  \label{lem:7}
  Let \(v \geq 0\) be a non-negative function defined in \(B_1\) such that
  \(v\) is harmonic in \(\{v > 0\}\), \(0 \in \partial\{v > 0\} \cap B_1\) and
  \(\partial\{v > 0\} \cap B_1\) is a Lipschitz graph in the \(e_n\) direction with
  Lipschitz constant \(L\). Then there exists \(\delta = \delta(n,L) > 0\) such that
  \(v\) is monotone in the \(e_n\) direction in \(B_{\delta}\).
\end{lemma}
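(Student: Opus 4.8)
The plan is to prove the stronger statement that $\partial_e v\ge 0$ in $B_\delta$ for every unit vector $e$ with $\angle(e,e_n)\le\tfrac12\arctan(1/L)$; the case $e=e_n$ is the lemma. After a translation we may assume $\{v>0\}\cap B_1=\{x_n>g(x')\}\cap B_1$ with $g$ an $L$-Lipschitz function, $g(0)=0$; recall that $v\in C(B_1)$ is positive and harmonic in $\Omega:=\{v>0\}$ and vanishes on $\partial\Omega\cap B_1$, and since the conclusion is unchanged under $v\mapsto cv$ we normalize $v(\tfrac12e_n)=1$, so that the Carleson estimate and interior Harnack bound $v$ on $\Omega\cap B_{1/4}$ by a constant depending only on $n,L$, and bound it below at the corkscrew points $A_r:=\tfrac12 r e_n$ by a positive power of $r$ with exponent depending only on $n,L$. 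The first ingredient is the elementary \emph{cone of monotone directions}: if $e\cdot e_n>L|e'|$ — in particular for every $e$ in the cone above — then $x+te\in\Omega$ whenever $x\in\Omega$, $t>0$ and $[x,x+te]\subset B_1$, because $g(x'+te')\le g(x')+Lt|e'|<x_n+t(e\cdot e_n)$; moreover $e_n$ lies in this cone with a margin $\sigma=\sigma(L)>0$, which gives the quantitative gain $\dist(x+te,\partial\Omega)\ge c(L)\,(\dist(x,\partial\Omega)+t)$.

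For a fixed such $e$ and $0<t<t_0$ small I would then work with the incremental quotient $w=w^e_t:=v(\cdot+te)-v$. By the cone property $\{v>0\}\subseteq\{v(\cdot+te)>0\}$, so $w$ is harmonic in $\Omega\cap B_{1/2}$, and $w=v(\cdot+te)\ge 0$ on $\partial\Omega\cap B_{1/2}$. The gain above places $x+te$ a distance $\gtrsim_L t$ from $\partial\Omega$, so the Carleson estimate together with interior Harnack for Lipschitz domains (see e.g.\ \cite{tolsa2025prats}) upgrades this to $v(x+te)\ge 2v(x)$, hence $w\ge v>0$, on the one-sided neighbourhood $N_t:=\{x\in\Omega\cap B_{1/2}:\dist(x,\partial\Omega)\le\lambda t\}$ for some $\lambda=\lambda(n,L)>0$.

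It remains to propagate $w\ge 0$ from $N_t$ into $\Omega\cap B_\delta$, which is the delicate step. One way is the maximum-principle/barrier argument of Caffarelli's Harnack approach to free boundary regularity, comparing $w$ in the interior region $\{\dist(\cdot,\partial\Omega)>\lambda t\}$ against the harmonic measure of $\partial B_{1/2}\cap\Omega$ — which, $\Omega$ being $L$-Lipschitz, decays like a fixed power of $|x|$ — and against the near-boundary lower bound $w\ge v$, choosing $\delta=\delta(n,L)$ so that the contribution of the outer sphere is dominated. A cleaner route, which avoids tracking the outer contribution by hand, is a compactness argument: if the claim failed there would be $L$-Lipschitz graphs $g_k$ with $g_k(0)=0$ and functions $v_k$ as above, normalized, with $\partial_{e_n}v_k<0$ at some point $x_k\in\Omega_k\cap B_{1/k}$; rescaling at the scale $\max(|x_k|,\dist(x_k,\partial\Omega_k))$ and using the uniform Carleson/Harnack/boundary Hölder estimates for $L$-Lipschitz domains, one obtains in the limit a positive harmonic function $v_\infty$ on a global special $L$-Lipschitz domain, vanishing on the boundary, with polynomial growth and $\partial_{e_n}v_\infty<0$ at an interior point (the near-boundary positivity of the previous paragraph rules out the limit point drifting to the boundary). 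But on a global such domain the incremental quotients $w_t=v_\infty(\cdot+te_n)-v_\infty$ are harmonic in all of $\{v_\infty>0\}$, $\ge0$ on the boundary, and of polynomial growth, so a Phragmén–Lindelöf argument — dominating $w_t$ from below by $-\varepsilon P$ for a positive harmonic $P$ of sufficiently large growth and letting $\varepsilon\to0$ — gives $w_t\ge0$ everywhere, hence $\partial_{e_n}v_\infty\ge0$, a contradiction. In either route one concludes by letting $t\to0$.

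The main obstacle is precisely this propagation from a neighbourhood of the free boundary to the interior: the naive estimate $|w|\le C(v)\,t$ is available only away from $\partial\Omega$, while near $\partial\Omega\cap\partial B_{1/2}$ one has nothing better than $w\ge-\sup_{B_{1/2}}v$, which is not of order $t$; the reason $\delta$ can be taken depending only on $n,L$ (and not on $v$) is that, after the normalization, both the decay of the outer harmonic measure at scale $\delta$ and the slower, Lipschitz-rate vanishing of $v$ at the free boundary are controlled purely in terms of $n$ and $L$, so the balance that defeats the outer term can be struck once and for all. Everything else — the cone lemma, the harmonicity of $w$, and the near-boundary positivity — is routine.
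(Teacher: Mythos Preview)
The paper gives no proof of its own, deferring entirely to \cite[Lemma~11.12]{MR2145284} (Caffarelli--Salsa). Your barrier approach---the cone property giving $\{v>0\}\subset\{v(\cdot+te)>0\}$, the near-boundary positivity $w_t\ge v$ on $N_t$ via Carleson/Harnack, and then the comparison with the harmonic measure of $\partial B_{1/2}\cap\Omega$ (which, being a positive harmonic function vanishing on $\partial\Omega$, is boundary-Harnack comparable to $v$) to absorb the uncontrolled outer boundary data---is exactly the argument in that reference, and you have correctly isolated the delicate step and the mechanism that makes $\delta$ depend only on $(n,L)$.

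The compactness alternative you sketch is plausible but the Phragm\'en--Lindel\"of step on a global Lipschitz epigraph is borderline: the growth rate of a positive harmonic function vanishing on the boundary and the decay rate of the outer harmonic measure are governed by the \emph{same} boundary exponent $\alpha(n,L)$, so a crude comparison $w_t\ge-\varepsilon P$ with $P$ of ``sufficiently large growth'' does not obviously close without an additional idea (e.g.\ exploiting again the near-boundary positivity $w_t\ge v_\infty$ in the limit, which effectively reduces the problem back to the barrier argument). Since you also outline the direct barrier route, which avoids this issue and matches the cited reference, the proposal is sound.
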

\begin{proof}
  See \cite[Lemma 11.12]{MR2145284}.
\end{proof}
\begin{proposition}
  \label{prop:9}
  All Lipschitz cones are trivial.
\end{proposition}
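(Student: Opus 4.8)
The plan is to argue by induction on the dimension $n$, with the dimension‑reduction mechanism provided by Lemma \ref{lem:6} together with the product structure of Proposition \ref{prop:8}; recall that throughout this subsection $f\equiv 0$ and $Q\equiv Q_0$ is a positive constant. For the base case $n=1$, a Lipschitz cone with non‑empty free boundary is, directly from condition \autoref{item:3} of Definition \ref{defn:1}, of the form $Q_0x_+$ or $Q_0x_-$, hence trivial. (For $n=2$ one may also argue by hand: $\{u>0\}$ is a union of open planar sectors, on each of which $u=r\,g(\phi)$ with $g''+g=0$ and $g$ vanishing at the endpoints of the sector, which forces every sector to have opening $\pi$; having more than one such sector would make $\partial_{\mathrm{red}}\{u>0\}$ empty while $\Delta u$ carries a non‑trivial singular part, contradicting \autoref{item:3}, so $u=Q_0(x\cdot\nu)_+$.)

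For the inductive step I fix $n\ge 2$, assume every Lipschitz cone in $\R^{n-1}$ is trivial, and take a Lipschitz cone $u$ in $\R^n$. First I would show that no free boundary point other than the origin is singular: given $x_0\in\partial\{u>0\}$ with $x_0\ne 0$, after a rotation taking $x_0$ onto the axis Lemma \ref{lem:6} produces a blow‑up of $u$ at $x_0$ converging locally uniformly to $u_0(x)=w(x_1,\dots,x_{n-1})$ with $w$ a Lipschitz cone in $\R^{n-1}$; by the inductive hypothesis $w$ is trivial, so the last assertion of Lemma \ref{lem:6} gives that $x_0$ is a regular point. By Corollary \ref{cor:2}, $\partial\{u>0\}$ is then a $C^{\infty}$ hypersurface in $\R^n\setminus\{0\}$, $u$ is a classical solution there, and $|\nabla u|\equiv Q_0$ on $\partial\{u>0\}\setminus\{0\}$ by Remark \ref{rem:3}.

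Next, since $\partial\{u>0\}$ is a Lipschitz cone through the origin, after a rotation it is globally the graph $\{x_n=\gamma(x')\}$ of a $1$-homogeneous Lipschitz function $\gamma$ (smooth away from $0$), with $\{u>0\}=\{x_n>\gamma(x')\}$ connected. Applying Lemma \ref{lem:7} at the origin and using $1$-homogeneity gives $\partial_{e_n}u\ge 0$ in $\R^n$, and as $\partial_{e_n}u$ is harmonic on the connected set $\{u>0\}$ it is there either $\equiv 0$ or $>0$. If $\partial_{e_n}u\equiv 0$, then $u$ is independent of $x_n$, so by Proposition \ref{prop:8} its trace $v(x'):=u(x',0)$ is a Lipschitz cone in $\R^{n-1}$, trivial by induction, whence $u(x)=Q_0(x\cdot\nu)_+$ with $\nu=(\nu',0)$. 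If instead $\partial_{e_n}u>0$ in $\{u>0\}$, I would pass to the unit sphere: writing $u=r\,g(\omega)$ with $\omega\in S^{n-1}$, harmonicity of $u$ amounts to $-\Delta_{S^{n-1}}g=(n-1)g$ on $\Omega:=\{u>0\}\cap S^{n-1}$, so $g$ is the first Dirichlet eigenfunction of $\Omega$ with $\lambda_1(\Omega)=n-1$, while the free boundary condition reads $|\partial_\nu g|\equiv Q_0$ on $\partial\Omega$. The monotonicity of $u$ in $e_n$ makes $\Omega$ a monotone spherical domain, and combining this with the strict domain monotonicity of $\lambda_1$ and the Serrin‑type rigidity on $S^{n-1}$ (the open hemisphere being the only spherical domain carrying a positive first eigenfunction with eigenvalue $n-1$ and constant boundary gradient) forces $\Omega$ to be a hemisphere; then $g$ is a multiple of a linear function and $u=Q_0(x\cdot\nu)_+$.

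The dimension reduction and the base case are soft; the main obstacle is the final step, namely the rigidity for $1$-homogeneous weak solutions with smooth free boundary, which reduces to an overdetermined eigenvalue problem on $S^{n-1}$. Configurations with singular free boundary — folds such as $\{u>0\}=\{x_n>\max(0,x_1)\}$ — are disposed of by the dimension reduction, so one is left precisely with smooth cones, and it is here that the monotonicity furnished by Lemma \ref{lem:7} is what pins down the spherical domain $\Omega$.
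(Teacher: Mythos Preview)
Your dimension reduction via Lemma \ref{lem:6} and Corollary \ref{cor:2} matches the paper exactly, so the free boundary is smooth away from the origin. The gap is in the endgame.

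First, your dichotomy on $\partial_{e_n}u$ is vacuous on one side. You take $e_n$ to be the graph direction, $\{u>0\}=\{x_n>\gamma(x')\}$. If $\partial_{e_n}u\equiv 0$ on this connected set, then $u(x',x_n)$ is constant in $x_n$ for $x_n>\gamma(x')$; letting $x_n\downarrow\gamma(x')$ gives $u\equiv 0$, contradicting $0\in\partial\{u>0\}$. So the case $\partial_{e_n}u\equiv 0$ never occurs, and your entire argument rests on the case $\partial_{e_n}u>0$.

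Second, in that case you reduce to a Serrin-type overdetermined eigenvalue problem on $S^{n-1}$ (first Dirichlet eigenfunction with $\lambda_1=n-1$ and constant normal derivative) and assert that monotonicity plus ``strict domain monotonicity of $\lambda_1$'' force $\Omega$ to be a hemisphere. But $\Omega=\{x_n>\gamma(x')\}\cap S^{n-1}$ is not, in general, comparable to any hemisphere (take $\gamma(x')=-\tfrac12|x'|$, say), so eigenvalue monotonicity gives nothing, and the spherical Serrin rigidity you invoke is not a standard result you can cite --- it is essentially equivalent to what you are trying to prove. You flag this as ``the main obstacle'' but do not overcome it.

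The paper closes exactly this gap by choosing the direction more carefully. Instead of $e_n$, it takes $\tau\in\partial\mathcal{C}$ on the boundary of the cone of monotonicity, tangent to $\partial\{u>0\}$ at some $x_0\neq 0$; Lemma \ref{lem:7} still gives $\partial_\tau u\ge 0$. Now the dichotomy has content: if $\partial_\tau u\equiv 0$ one reduces dimension via Proposition \ref{prop:8} (this case is \emph{not} vacuous for $\tau$, since $\tau$ is not transverse to the boundary); if $\partial_\tau u>0$, then $\partial_\tau u$ is a positive harmonic function vanishing at $x_0$ (because $\tau$ is tangent there), so Hopf gives $\partial_\nu\partial_\tau u(x_0)>0$, while differentiating $|\nabla u|=Q_0$ along the tangent $\tau$ gives $\partial_\nu\partial_\tau u(x_0)=0$, a contradiction. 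The missing idea in your argument is precisely this choice of a \emph{tangential} extremal direction, which replaces the unproved spherical rigidity by a two-line Hopf computation.
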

\begin{proof}
  We proceed by induction on the dimension \(n\). Assume \(n = 2\) and let
  \(u:\R^2 \to \R\) be a Lipschitz cone. Since \(\partial\{u > 0\}\) is Lipschitz,
  \(u\) satisfies the density estimate \eqref{eq:densityEst} and we can apply
  \cite[Theorem 5.1]{MR1620644} to obtain that
  \(\partial\{u > 0\} = \partial_{\mathrm{red}}\{u > 0\}\). Hence, \cite[Theorem 8.2]{MR618549} and
  \cite[Corollary 1.6]{carducci2025regularity} imply that \(\partial\{u > 0\}\) is a
  \(C^{\infty}\)-curve locally in \(\R^2\), and by the 1-homogeneity of \(u\) we conclude
  \(u\) is trivial.

  Assume now that the statement holds for dimension \(n - 1\) and let \(u\) be a
  Lipschitz cone in \(\R^n\) with boundary condition \(Q_0\). Since all Lipschitz
  cones in \(\R^{n-1}\) are trivial by induction hypothesis, Lemma \ref{lem:6}
  implies that \(\partial\{u > 0\} \setminus \{0\}\) consists only of regular points. Thus,
  \(\partial\{u > 0\}\) is smooth outside the origin by Corollary \ref{cor:2}. Moreover,
  without loss of generality we may assume that \(\partial\{u > 0\}\) is a Lipschitz graph
  in the \(e_n\) direction in \(B_1\) with Lipschitz constant \(L\). Hence, given any
  direction \(\sigma\) of the open cone
  \[
    \mathcal{C} \coloneqq \{\xi = (\xi',\xi_n) \in \R^n \mid \xi_n > L|\xi'|\},
  \]
  there exists \(\delta(\sigma)\) such that \(\partial_{\sigma}u \geq 0\) in
  \(B_{\delta(\sigma)}\) by Lemma \ref{lem:7}. Since \(u\) is 1-homogeneous,
  \(\partial_{\sigma}u\) is 0-homogeneous and therefore \(\partial_{\sigma}u \geq 0\) in
  \(\{u > 0\}\) for any \(\sigma \in \mathcal{C}\). Moreover, by continuity of the derivatives of
  \(u\) in \(\{u > 0\}\), the same conclusion must hold for
  \(\sigma \in \partial\mathcal{C}\). Thus, there exists a direction
  \(\tau \in \partial\mathcal{C}\), \(|\tau| = 1\) such that \(\tau\) is tangent to
  \(\partial\{u > 0\}\) at some point \(x_0 \in \partial\{u > 0\} \setminus \{0\}\) and
  \[
    \partial_{\tau}u \geq 0 \quad\text{in } \{u > 0\}.
  \]

  If \(\partial_{\tau}u = 0\) at some point in \(\{u > 0\}\), then
  \(\partial_{\tau}u \equiv 0\) by the maximum principle. Hence, \(u\) is constant in the
  \(\tau\) direction and by Proposition \ref{prop:8} we can reduce the problem to
  \(n - 1\) dimensions. Then by induction hypothesis \(u\) is trivial and we
  are done.

  Otherwise \(\partial_{\tau}u > 0\) in \(\{u > 0\}\). Since \(x_0 \neq 0\), the free boundary is
  smooth in a neighborhood \(B_{\rho}(x_0)\) of \(x_0\). Thus, Remark \ref{rem:3}
  implies that \(u\) is a classical solution in \(B_{\rho}(x_0)\) and
  \(u \in C^{\infty}(\overline{\{u > 0 \}} \cap B_{\rho}(x_0))\). In particular, if
  \(\nu\) denotes the inward normal vector to \(\partial\{u > 0\}\), then
  \(\partial_{\nu}u = Q_0\) on \(\partial\{u > 0\} \cap B_{\rho}(x_0)\). Therefore, the fact that
  \(\tau\) is tangent to \(\partial\{u > 0\}\) at \(x_0\) yields
  \[
    \partial_{\nu}\partial_{\tau}u(x_0) = 0.
  \]
  On the other hand, \(\partial_{\tau}u\) is a positive harmonic function in
  \(\{u > 0\} \cap B_{\rho}(x_0)\) (also smooth up to the boundary) and
  \(\partial_{\tau}u \geq 0\) on \(\partial\{u > 0\} \cap B_{\rho}(x_0)\) by continuity of
  \(\partial_{\tau}u\). Since \(\tau \in \partial\mathcal{C}\) is tangent to
  \(\partial\{u > 0\}\) at \(x_0\), we also have that \(\partial_{\tau}u(x_0) = 0\). Thus, we reach a
  contradiction because the Hopf lemma implies that
  \[
    \partial_{\nu}\partial_{\tau}u(x_0) > 0.
  \]
\end{proof}
With Proposition \ref{prop:9} we are now able to give the first proof of Theorem
\ref{thm:1}.
\begin{proof}[First proof of Theorem \ref{thm:1}]
  By Lemma \ref{lem:5} \(u\) is a weak solution in \(B_1\) in the sense of Definition
  \ref{defn:1} and its free boundary
  \(\partial\{u > 0\} \cap B_1 = \partial\Omega \cap B_1\) is Lipschitz. Let
  \(x_0 \in \partial\{u > 0\} \cap B_1\) be a free boundary point. By Corollary \ref{cor:3} and
  Proposition \ref{prop:7}, for any blow--up sequence
  \[
    u_{r_k}(x) \coloneqq \frac{1}{r_k}u(x_0 + r_kx), \quad r_k \to 0,
  \]
  there exists a 1-homogeneous weak solution \(u_0\) with \(f \equiv 0\) and
  \(Q \equiv Q(x_0)\) such that, up to subsequence, \(u_{r_k} \to u_0\) in the sense of
  Lemma \ref{lem:1}. In particular, \(u_0\) is a Lipschitz cone which must be trivial
  by Proposition \ref{prop:9}, that is, there exists a unit vector \(\nu \in \R^n\) such
  that
  \[
    u_0(x) = Q(x_0)(x\cdot\nu)_+.
  \]
  Then by Corollary \ref{cor:2} \(\partial\{u > 0\}\) is smooth in a neighborhood of
  \(x_0\). It follows that \(\partial\{u > 0\} = \partial\Omega \cap B_1\) is locally a
  \(C^{\infty}\)-surface in \(B_1\).
\end{proof}

\subsection{Proof using viscosity solutions}

In general, a weak solution \(u\) need not be a viscosity solution. However, if the
density of \(\{u = 0\}\) at the free boundary is positive, then we show that \(u\) is
a viscosity solution. This allows us to prove the improved regularity of Lipschitz
free boundaries using the known regularity results for viscosity solutions.
\begin{definition}
  \label{defn:3}
  A non-negative function \(u \in C(B_1)\) is a viscosity solution of
  \eqref{eq:inhom-alt-caff-free-bound-form} if
  \begin{enumerate}
  \item \(-\Delta u = f\) in \(\{u > 0\}\) in the viscosity sense.
  \item \(u\) satisfies the boundary condition \(|\nabla u| = \partial_{\nu}u = Q\) in the
    following sense: for any point \(x_0 \in \partial\{u > 0\} \cap B_1\) and any smooth function
    \(\varphi \in C^{\infty}(B_1)\),
    \begin{itemize}
    \item If \(\varphi\) touches \(u\) from below at \(x_0\), then
      \(|\nabla\varphi(x_0)| \le Q(x_0)\).
    \item If \(\varphi_+\) touches \(u\) from above at \(x_0\), then
      \(|\nabla\varphi(x_0)| \ge Q(x_0)\).
    \end{itemize}
  \end{enumerate}
\end{definition}
\begin{proposition}
  \label{prop:10}
  Let \(u\) be a weak solution of \eqref{eq:inhom-alt-caff-free-bound-form} in
  \(B_1\) such that
  \begin{equation}
    \label{eq:pos_lower_density}
    \liminf_{r \to 0} \frac{|\{u = 0\} \cap B_r(x)|}{|B_r|} > 0
  \end{equation}
  for every point \(x \in \partial\{u > 0\} \cap B_1\). Then \(u\) is a viscosity solution in
  \(B_1\).
\end{proposition}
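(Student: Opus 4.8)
The plan is to verify the two clauses of Definition \ref{defn:3} in turn. The interior equation is immediate: by Definition \ref{defn:1}\autoref{item:1} and the smoothness of \(f\), interior Schauder estimates give \(u\in C^{\infty}(\{u>0\})\), so \(-\Delta u=f\) holds classically, hence in the viscosity sense, in \(\{u>0\}\). For the free boundary condition fix \(x_0\in\partial\{u>0\}\cap B_1\), so that \(u(x_0)=0\). Suppose first that \(\varphi\in C^{\infty}(B_1)\) touches \(u\) from below at \(x_0\); then \(\varphi(x_0)=0\) and I may assume \(\beta:=|\nabla\varphi(x_0)|>0\). With \(e:=\nabla\varphi(x_0)/\beta\), a Taylor expansion gives \(u(x_0+re)\ge\varphi(x_0+re)=\beta r+O(r^2)\), so \(\sup_{\overline{B_r(x_0)}}u\ge\beta r-Cr^2\). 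On the other hand \(u\) is locally Lipschitz by Proposition \ref{prop:1}, \(\nabla u=0\) a.e.\ on \(\{u=0\}\) by the Lebesgue density theorem, and \(\limsup_{x\to x_0,\,u(x)>0}|\nabla u(x)|\le Q(x_0)\) by Corollary \ref{cor:1}; integrating \(\nabla u\) along the segment issuing from \(x_0\) yields \(\sup_{\overline{B_r(x_0)}}u\le r(Q(x_0)+o(1))\) as \(r\to0\). Combining and letting \(r\to0\) gives \(\beta\le Q(x_0)\).

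For the remaining inequality, suppose \(\varphi_+\) touches \(u\) from above at \(x_0\). Since \(x_0\in\partial\{u>0\}\) one must have \(\varphi(x_0)=0\), and the non-degeneracy in Definition \ref{defn:1}\autoref{item:2} rules out \(\nabla\varphi(x_0)=0\) (which would force \(u(x)=o(|x-x_0|)\)), so \(\beta:=|\nabla\varphi(x_0)|>0\); set \(e:=\nabla\varphi(x_0)/\beta\). A Taylor expansion gives \(u(x)\le\beta\big((x-x_0)\cdot e\big)_++C|x-x_0|^2\). I would then take a blow-up sequence \(u_{r_k}(x)=r_k^{-1}u(x_0+r_kx)\); by Lemma \ref{lem:1} a subsequence converges to a limit \(u_0\), by Proposition \ref{prop:7} this \(u_0\) is \(1\)-homogeneous, and the bound above passes to \(u_0(x)\le\beta(x\cdot e)_+\), so \(\{u_0>0\}\subseteq\{x\cdot e>0\}\). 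The density assumption \eqref{eq:pos_lower_density} at \(x_0\) transfers (using \(\chi_{\{u_{r_k}>0\}}\to\chi_{\{u_0>0\}}\) in \(L^1_{\loc}\)) to a positive lower density of \(\{u_0=0\}\) at \(0\), so by the blow-up analysis of Proposition \ref{prop:5} the limit \(u_0\) is a weak solution in \(\R^n\) with \(f\equiv0\), \(Q\equiv Q(x_0)\), and \(0\in\partial\{u_0>0\}\) by Lemma \ref{lem:1}\autoref{item:8}. Since \(\{x\cdot e\le0\}\subseteq\{u_0=0\}\), the ball \(B=B_1(-e)\) lies in \(\{u_0=0\}\) and touches \(\partial\{u_0>0\}\) at \(0\), so Proposition \ref{prop:3} gives \(\limsup_{x\to0,\,u_0(x)>0}u_0(x)/\dist(x,B)\ge Q(x_0)\). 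But for \(x\) with \(x\cdot e>0\) one has \(\dist(x,B)=|x+e|-1\ge(x\cdot e)(1+|x|/2)^{-1}\), hence \(u_0(x)/\dist(x,B)\le\beta(1+|x|/2)\to\beta\) as \(x\to0\). Therefore \(\beta\ge Q(x_0)\).

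The crux is this last inequality, and within it the step identifying the blow-up \(u_0\) as a genuine weak solution of the homogeneous problem with the sharp constant \(Q(x_0)\): this is precisely where \eqref{eq:pos_lower_density} enters, through Proposition \ref{prop:5}, and without it the blow-up could be a degenerate object (of the type \(|x\cdot e_n|\)) to which Proposition \ref{prop:3} does not apply. If one prefers to avoid Proposition \ref{prop:5}, one can instead classify \(u_0\) by hand: its \(1\)-homogeneity, harmonicity in \(\{u_0>0\}\) (Lemma \ref{lem:1}\autoref{item:9}) and the one-sided bound force, via strict domain monotonicity of the first Dirichlet eigenvalue of the hemisphere, that \(u_0(x)=c'(x\cdot e)_+\) for some \(0<c'\le\beta\); passing the measures \(\Delta u_{r_k}\) to the limit and invoking lower semicontinuity of perimeter (together with the uniform perimeter bounds coming from Definition \ref{defn:1}\autoref{item:2} and \cite{MR3667700}) then yields \(c'\ge Q(x_0)\), whence again \(\beta\ge Q(x_0)\).
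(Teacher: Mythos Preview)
Your argument is correct and differs from the paper's in both halves of the free boundary condition. For the subsolution inequality (\(\varphi\) touching from below) the paper blows up, invokes Proposition~\ref{prop:5} together with the density assumption to make the limit \(u_0\) a weak solution, classifies it as \(Q(x_0)(x\cdot\nu)_+\) via \cite[Lemma~9.16]{MR4807210} (ruling out the two-sided profile by density), and then reads off \(|\nabla\varphi(x_0)|\le Q(x_0)\); your route is markedly more elementary, using only Corollary~\ref{cor:1} to bound the local Lipschitz constant of \(u\) near \(x_0\) by \(Q(x_0)+o(1)\), which in particular shows that the subsolution property holds for \emph{every} weak solution without any density hypothesis. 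For the supersolution inequality both proofs blow up and rely on Proposition~\ref{prop:5}; the paper then classifies \(u_0=\alpha(x\cdot\nu)_+\) via \cite[Lemma~9.15]{MR4807210} and identifies \(\alpha=Q(x_0)\) through Remark~\ref{rem:3}, whereas you feed the one-sided barrier \(u_0\le\beta(x\cdot e)_+\) directly into the touching-ball lower bound of Proposition~\ref{prop:3}, avoiding the external classification lemma. Your closing alternative (eigenvalue monotonicity to force \(u_0=c'(x\cdot e)_+\), then lower semicontinuity of perimeter to pass \(\Delta u_{r_k}\to\Delta u_0\) and obtain \(c'\ge Q(x_0)\)) is also sound and has the virtue of bypassing Proposition~\ref{prop:5} altogether; the paper's path is cleaner once one is willing to quote the classification lemmas from \cite{MR4807210}, while yours keeps the argument self-contained within the tools developed here.
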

\begin{proof}
  We follow the proof of \cite[Proposition 9.18]{MR4807210}. We need to check that
  \(u\) satisfies the boundary condition \(\partial_{\nu}u = Q\) in the viscosity sense. To
  this end, let \(x_0 \in \partial\{u > 0\} \cap B_1\) and let \(\varphi\) be a smooth function
  touching \(u\) from below at \(x_0\). Consider the blow-up sequences
  \[
    u_{r_k}(x) = \frac{1}{r_n}u(x_0 + r_kx), \qquad
    \varphi_{r_k}(x) = \frac{1}{r_k}\varphi(x_0 + r_kx).
  \]
  By \eqref{eq:pos_lower_density} we can apply Proposition \ref{prop:5} to obtain the
  existence a weak solution \(u_0 \in C^{0,1}(\R^n)\) with \(f \equiv 0\) and
  \(Q \equiv Q(x_0)\) such that \(u_{r_k} \to u_0\) locally uniformly in \(\R^n\). Moreover,
  \(u_0\) is harmonic in \(\{u_0 > 0\}\) and by Proposition \ref{prop:7} it is
  1-homogeneous.

  Notice that by the smoothness of \(\varphi\) we have that
  \(\varphi_{r_k} \to \varphi_0\) locally uniformly in \(\R^n\) with
  \(\varphi_0(x) = \nabla\varphi(x_0) \cdot x\). Without loss of generality we may assume that
  \(\nabla\varphi(x_0) = ae_n\) so that
  \[
    \varphi_0(x) = ax_n.
  \]
  Moreover, we can assume that \(a > 0\) since otherwise the inequality
  \(|\nabla\varphi(x_0)| \leq Q(x_0)\) holds trivially.

  Given the fact that \(u \geq \varphi\) near \(x_0\), we see that
  \(u_0 \geq \varphi_0\) and therefore \(u_0 > 0\) on the set \(\{x_n > 0\}\). Thus,
  \(u_0\) is a 1-homogeneous harmonic function on \(\{u_0 > 0\} \supset \{x_n > 0\}\) and
  we deduce (see \cite[Lemma 9.16]{MR4807210}) that
  \[
    \text{either}\quad u_0(x) = \alpha x_n^+ \quad\text{or}\quad u_0(x) = \alpha x_n^+ + \beta x_n^-.
  \]
  The second case can be discarded since it contradicts
  \eqref{eq:pos_lower_density}. Hence,
  \[
    u_0(x) = \alpha x_n^+
  \]
  and Remark \ref{rem:3} implies that \(\alpha = Q(x_0)\). Now, the inequality
  \(u_0 \geq \varphi_0\) implies that
  \[
    |\nabla\varphi(x_0)| = a \leq Q(x_0).
  \]

  Suppose now that \(\varphi\) touches \(u\) from above at \(x_0\). We repeat the same
  blow-up procedure and assume \(\varphi_0\) is of the form \(\varphi_0(x) = ax_n\) with
  \(a = |\nabla\varphi(x_0)|\). Again, since \(u \leq \varphi\) near \(x_0\), we obtain that
  \(u_0 \leq \varphi_0\). Now, the non-degeneracy of \(u\) implies that
  \(u_0 \not\equiv 0\) and therefore \(a > 0\). Moreover, the inequality
  \(u_0 \leq \varphi_0\) also implies \(\{u_0 > 0\} \subset \{x_n > 0\}\). Hence, by the
  1-homogeneity of \(u_0\) and \cite[Lemma 9.15]{MR4807210} we must have
  \(\{u_0 > 0\} = \{x_n > 0\}\) and
  \[
    u_0(x) = \alpha x_n^+.
  \]
  Thus, by Remark \ref{rem:3} we get that \(\alpha = Q(x_0)\) and the inequality
  \(u_0 \leq \varphi_0\) implies
  \[
    |\nabla\varphi(x_0)| = a \geq Q(x_0).
  \]
\end{proof}
\begin{proof}[Second proof of Theorem \ref{thm:1}]
  By Lemma \ref{lem:5} \(u\) is a weak solution in \(B_1\) in the sense of Definition
  \ref{defn:1} and its free boundary
  \(\partial\{u > 0\} \cap B_1 = \partial\Omega \cap B_1\) is Lipschitz. In particular,
  \(u\) satisfies \eqref{eq:pos_lower_density}. Thus, \(u\) is a viscosity solution
  in \(B_1\) by Proposition \ref{prop:10} and we can apply \cite[Theorem
  1.2]{MR2813524} to obtain that \(\partial\{u > 0\} \cap B_1\) is locally a
  \(C^{1,\alpha}\)-surface. Finally, \cite[Corollary 1.6]{carducci2025regularity} allows
  us to improve this regularity and conclude that \(\partial\{u > 0\} \cap B_1\) is locally a
  \(C^{\infty}\)-surface as claimed.
\end{proof}

\section{Applications of Theorem \ref{thm:1}}
\label{sec:main-results}

\subsection{Serrin's overdetermined problem}

Recall that for a bounded Lipschitz domain \(\Omega \subset \R^n\), a function
\(u \in H^1_0(\Omega)\) is said to be a weak solution of Serrin's problem
\[
  \left\{
  \begin{aligned}
    -\Delta u &= 1 \quad&&\text{in } \Omega \\
    u &= 0 \quad&&\text{on } \partial \Omega \\
    \partial_{\nu}u &= c > 0 \quad&&\text{on } \partial \Omega,
  \end{aligned}
  \right.
\]
if it satisfies the equation
\begin{equation}
  \label{eq:weak-serrin-formulation}
  -\int_\Omega \nabla u\cdot\nabla\eta = c\int_{\partial \Omega} \eta d\cH^{n-1} - \int_\Omega \eta, \qquad \forall\eta \in C^{\infty}_{\mathrm{c}}(\R^n).
\end{equation}
\begin{proof}[Proof of Theorem \ref{thm:2}]
  Without loss of generality we may assume \(0 \in \partial \Omega\) and
  \(\Omega \subset\subset B_1\). Note that \eqref{eq:weak-serrin-formulation} implies
  \[
    \int_\Omega \nabla u\cdot\nabla\eta = \int_\Omega \eta \geq 0, \qquad \forall\eta \in C^{\infty}_{\mathrm{c}}(\Omega),\, \eta \geq 0.
  \]
  Thus, \(u\) is a weakly superharmonic function in \(\Omega\) with zero boundary values.
  By the maximum principle, \(u > 0\) in \(\Omega\). After extending \(u\) by zero in
  \(B_1 \setminus \Omega\), we can take \(\eta \in C^{\infty}_{\mathrm{c}}(B_1)\) in
  \eqref{eq:weak-serrin-formulation} to obtain
  \[
    -\int_{B_1} \nabla u\cdot\nabla\eta = c\int_{\partial \Omega \cap B_1} \eta d\cH^{n-1} - \int_{\Omega \cap B_1} \eta, \qquad \forall\eta \in C^{\infty}_{\mathrm{c}}(B_1).
  \]
  Hence, by Theorem \ref{thm:1} \(\partial \Omega \cap B_1\) is locally smooth in
  \(B_1\), so \(\partial \Omega \cap B_{1/2} = \partial \Omega\) is smooth. Now, by Schauder estimates
  \(u \in C^{\infty}(\overline{\Omega})\) and therefore \(u\) solves \eqref{eq:serrin-prob} in
  the classical sense. Thus, \cite{MR333220} implies \(\Omega = B_R\).
\end{proof}

\subsection{Poisson kernel regularity}

Recall that if \(\Omega \subset \R^n\) is a bounded Lipschitz domain and \(G_x\) denotes the
Green function for \(\Omega\) with pole at \(x \in \Omega\), then \(G_x\) is a positive harmonic
function in \(\Omega \setminus \{x\}\),
\(G_x \in C(\overline{\Omega} \setminus \{x\})\) and \(G_x = 0\) on
\(\partial\Omega\). Moreover, the Poisson kernel \(P_x = \partial_{\nu}G_x\) is a positive function
defined \(\cH^{n-1}\)-a.e. on \(\partial\Omega\) and satisfies the equation
\[
  -\int_\Omega \nabla G_x\cdot\nabla\eta = -\eta(x) + \int_{\partial\Omega} P_x\eta d\cH^{n-1}, \qquad \forall\eta \in C^{\infty}_{\mathrm{c}}(\R^n).
\]
\begin{proof}[Proof of Theorem \ref{thm:3}]
  Let \(G_x\) be the Green function for \(\Omega\) with pole at \(x\). Also, let
  \(y \in \partial \Omega\) be any boundary point and choose \(r\) sufficiently small so that
  \(x \not\in B_r(y)\). After a translation and a rescaling we may assume \(y = 0\) and
  \(r = 1\). Then, by the discussion above
  \[
    -\int_{B_1} \nabla G_x\cdot\nabla\eta = \int_{\partial \Omega \cap B_1} P_x\eta d\cH^{n-1}, \qquad \forall\eta \in C^{\infty}_{\mathrm{c}}(B_1),
  \]
  where \(P_x\) is smooth by assumption and positive on \(\partial\Omega\). Thus, extending
  \(G_x\) by zero in \(B_1 \setminus \Omega\) we can apply Theorem \ref{thm:1} to deduce that
  \(\partial\Omega\) is a smooth surface in, say, \(B_{1/2}\). Since
  \(y \in \partial\Omega\) is arbitrary, we conclude that \(\partial\Omega\) is a smooth surface.
\end{proof}

%%%%%%%%%%%%%%%%%%%%%%%%%%%%%%%%%%%%%%%%%%%%%%%%%%%%%%%%%%%%%%%%%%%%%%%%%%%%
\printbibliography
\end{document}